%%%%%%%%%%%%%%%%%%%%%%%%%%%%%%%%%%%%%%%%%%%%%%%%%%%%%%%%%%%%%%%
%%%%       Hypersurfaces with constant sectional curvature %%%%
%%%%          of $\mathbb{S}^2\times \mathbb{S}^2$         %%%%
%%%%                     2023-02-01                        %%%%
%%%%%%%%%%%%%%%%%%%%%%%%%%%%%%%%%%%%%%%%%%%%%%%%%%%%%%%%%%%%%%%
\documentclass[reqno]{amsart}%[a4paper,10pt,reqno]{amsart}

\usepackage[english]{babel}
\usepackage{amsmath,amssymb,amsthm}
\usepackage{hyperref}
\usepackage[centering]{geometry}
\usepackage{xcolor}

%\textwidth=13.5cm
  %% command for the metric  < , >
   %%  abbreviation for S3 x S3
   %%  Levi Civita connection on S3 x S3
   %%  Levi Civita connection on S3 x S3
\renewcommand{\epsilon}{\varepsilon}            %%  changes epsilon
  %%  Levi-Civita symbol
  %%  shorthand notation for \alpha\cdot\alpha, etc.
     %%
     %%
      %%

%%%%%%%%%%%%%%%%%%%%%%%%%%%

%%%%%%%%%%%%%%%%%%%%%%%%%%

    %% Real part
    %% Imaginary part
    %% Imaginary part
    %% Imaginary part

\newtheorem{theorem}{Theorem}[section]   %% definition of theorem environment
\newtheorem*{theorem*}{Theorem}          %% a theorem environment without numbering
\newtheorem{lemma}[theorem]{Lemma}
\newtheorem{proposition}[theorem]{Proposition}

\theoremstyle{definition}

\newtheorem{remark}{Remark}[section]

\numberwithin{equation}{section}

\title[Hypersurfaces of $\mathbb{S}^2\times\mathbb{S}^2$ with constant sectional curvature]
{Hypersurfaces of $\mathbb{S}^2\times\mathbb{S}^2$ with constant sectional curvature}
\thanks{}

\author{Haizhong Li, Luc Vrancken, Xianfeng Wang \and Zeke Yao}

%\address{H.~Li, Department of Mathematical Sciences, Tsinghua
%University, Beijing, 100084, P.R. China}
%\email{lihz@tsinghua.edu.cn}

%\address{L.~Vrancken, Univ. Polytechnique Hauts-de-France,
%CERAMATHS-Laboratoire de Math\'eriaux C\'{e}ramiques et de Math\'{e}matiques,
%F-59313 Valenciennes, France;
%INSA Hauts-de-France, CERAMATHS, F-59313 Valenciennes, France;
%Department of Mathematics, KU Leuven, Celestijnenlaan 200B, Box 2400, BE-3001 Leuven, Belgium.}
%\address{L.~Vrancken, LMI (Laboratoire de Math\'ematique pour l'Ing\'enieur),
%Universit\'e Polytechnique Hauts-de-France, F-59313 Valenciennes, France;
%Department of Mathematics, KU Leuven, Celestijnenlaan 200B, Box 2400, BE-3001 Leuven, Belgium.}
%\email{luc.vrancken@uphf.fr}

%\address{X.~Wang, School of Mathematical Sciences and LPMC, Nankai University,
%Tianjin 300071, China}
%\email{wangxianfeng@nankai.edu.cn}

%\address{Z.~Yao, Department of Mathematical Sciences, Tsinghua
%University, Beijing, 100084, P.R. China}
%\email{yaozkleon@163.com}

\subjclass[2010]{Primary 53C42; Secondary 53B25} % 53C55
\keywords{constant sectional curvature, minimal hypersurface, product
angle function, parallel hypersurface, sinh-Gordon equation}

\date{}
\begin{document}

\begin{abstract}
In this paper, we classify  the hypersurfaces of $\mathbb{S}^2\times\mathbb{S}^2$
with constant sectional curvature.  By applying the so-called Tsinghua principle,
which was first discovered by the first three authors in 2013 at Tsinghua University,
we prove that the constant sectional curvature can only be $\frac{1}{2}$
and the product angle function $C$ defined by Urbano is identically zero.
We show that any such hypersurface is a parallel hypersurface of a minimal hypersurface in $\mathbb{S}^2\times\mathbb{S}^2$ with $C=0$, and we establish  a one-to-one correspondence between the involving minimal hypersurface and the famous
``sinh-Gordon equation''
$$
(\frac{\partial^2}{\partial u^2}+\frac{\partial^2}{\partial v^2})h
=-\tfrac{1}{\sqrt{2}}\sinh(\sqrt{2}h).
$$
As a byproduct, we give a complete classification of the hypersurfaces
of $\mathbb{S}^2\times\mathbb{S}^2$ with constant mean curvature and constant product angle function $C$.

\end{abstract}

\maketitle

%%%%%%%%%%%%%%%%%%%%%%
\section{Introduction}\label{sect:1}

Besides the four-dimensional unit sphere $\mathbb{S}^4$ and the complex projective
plane $\mathbb{C}P^2$ of constant holomorphic sectional curvature $4$, the Riemannian
product $\mathbb{S}^2\times\mathbb{S}^2$ of two unit spheres is the most interesting
compact $4$-manifold. It is well known that  $\mathbb{S}^2\times\mathbb{S}^2$ and $\mathbb{C}P^2$ are the only compact Hermitian symmetric $4$-manifolds.
In recent years, the study of the canonical submanifolds of
$\mathbb{S}^2\times\mathbb{S}^2$ have become very active.  When the comdimension is  $2$, there have been many interesting results on minimal surfaces in $\mathbb{S}^2\times\mathbb{S}^2$ (cf. \cite{C-U,LMVVW,T-U2}, etc.) and surfaces with parallel mean curvature vector in $\mathbb{S}^2\times\mathbb{S}^2$  (cf. \cite{T-U1}, etc.). In hypersurface case (i.e., codimension $1$), it is remarable that  Urbano \cite{Ur}  classified the homogenous hypersurfaces and the isoparametric hypersurfaces of $\mathbb{S}^2\times\mathbb{S}^2$, and he also studied
the hypersufaces of $\mathbb{S}^2\times\mathbb{S}^2$ with constant principal curvatures and gave some other important classification results.
Recently, Gao, Hu, Ma and the fourth author \cite{G-H-M-Y} studied the  Hopf  hypersurfaces of $\mathbb{S}^2\times\mathbb{S}^2$ and the  hypersurfaces with isometric Reeb flow. Lu, Wang and Wang \cite{L-W-W} constructed some new examples of cohomogeneity one hypersurfaces in $\mathbb{S}^2\times\mathbb{S}^2$  with constant $C$. Nevertheless, many fundamental problems remain open.

%and many interesting results
%have been obtained. The results on surfaces of $\mathbb{S}^2\times\mathbb{S}^2$
%We notice that results on surfaces of $\mathbb{S}^2\times\mathbb{S}^2$,
%cf. \cite{C-U, T-U1, T-U2}, and in \cite{Ur} Urbano further classified
%the homogenous and isoparametric hypersurfaces of $\mathbb{S}^2\times\mathbb{S}^2$, respectively.
%Furthermore, he studied the hypersufaces with constant product
%angle function $C$ of $\mathbb{S}^2\times\mathbb{S}^2$.
%Regarding $\mathbb{S}^2\times\mathbb{S}^2$ as a K\"ahler surface which is holomorphically
%isometric to $Q^2$, Gao et al. \cite{G-H-M-Y} classified the hypersurface of
%$\mathbb{S}^2\times\mathbb{S}^2$ with isometric Reeb flow,
%and establish several new classification results.

From the point of view of Riemannian geometry, the classification problem of submanifolds with constant sectional curvature in different ambient spaces is an important and  attractive topic in differential geometry, and there have been lots of intereting
results in this respect. When the ambient space is a real space form, the classification of hypersurfaces with constant sectional curvature has a long history and have been well understood.
In non-flat complex space forms, there do not exist hypersurfaces with constant sectional curvature
cf. \cite{CR1982,I-R,K-R,Kon,M}, etc.  When the curvature tensor of ambient space does not have a simple expression,
the classification problem of hypersurfaces with
constant sectional curvature becomes very difficult, if one uses the Gauss and Codazzi
equations directly. To overcome this difficulty, the first three authors introduced a new approach,
the so-called {\it Tsinghua principle}, which applies the Codazzi equation
and the Ricci identity in a indirect way to obtain some nice linear equations involving
the components of the second fundamental form. By using such approach,
some canonical submanifolds with constant sectional curvature (even under more general conditions)
in some canonical Riemannian manifold have been classified,
cf. \cite{A-L-V-W,CHMV-1,D-V-W,LMVVW,V-W,Y-Y-H}, etc.
The main purpose of this paper is to classify the hypersurfaces of $\mathbb{S}^2\times \mathbb{S}^2$
with constant sectional curvature. %First of all, we would recall some relative results in
%canonical Riemannian manifolds.

Before stating our main result, we first recall some basis properties and some canonical examples. Note that there is a natural product
structure $P$ on $\mathbb{S}^2\times\mathbb{S}^2$ defined by
$$
P(X_1,X_2)=(X_1,-X_2)
$$
for any tangent vector field $X_1, X_2$ on $\mathbb{S}^2$. Then, as observed by Urbano
\cite{Ur}, the geometry of the hypersurfaces in $\mathbb{S}^2\times\mathbb{S}^2$
is closely related to a smooth function $C$. Here, for an orientable hypersurface $M$
of $\mathbb{S}^2\times\mathbb{S}^2$ with $N$ a unit normal vector field, the function
$C$ is defined by $$C=g(PN,N),$$ where $g$
denotes the standard metric on $\mathbb{S}^2\times\mathbb{S}^2$. Hereafter, we  call $C$ the {\it product angle function} of $M$.
It is clear that
$-1\le C\le1$. Urbano \cite{Ur} proved that any hypersurface of $\mathbb{S}^2\times\mathbb{S}^2$ with $C^2\equiv 1$ is locally the product of a curve in $\mathbb{S}^2$ and an open subset of $\mathbb{S}^2$.
We denote the standard metric on $\mathbb{S}^2$ by $\langle\cdot,\cdot\rangle $.
Then, as typical examples, for $t\in(-1,1)$ and unit vectors $a,b\in\mathbb{S}^2$,
Urbano \cite{Ur} showed that the following three kinds of hypersurfaces
in $\mathbb{S}^2\times\mathbb{S}^2$ have constant $C=0$ and vanishing Gauss-Krocneker curvature:
\begin{align*}
&M_t=\{(p, q)\in\mathbb{S}^2\times\mathbb{S}^2\,|\,\langle p,q\rangle=t\};\\
&M_{a, b}=\{(p, q)\in\mathbb{S}^2\times\mathbb{S}^2\,|\,\langle p,a\rangle+\langle q,b\rangle=0\};\\
&\hat{M}_{a, b}=\{(p,q)\in\mathbb{S}^2\times\mathbb{S}^2\,|\,
\langle p,a\rangle^2+\langle q,b\rangle^2=1\}.
\end{align*}
Moreover, the above examples have the following properties \cite{Ur} :

\begin{itemize}
	\item [(i)]
$M_t$ is a homogeneous isoparametic hypersurface with three constant principal curvatures (hence with constant mean curvature). Among all the {$M_t$}, only $M_0$ is minimal.
\item [(ii)] $M_{a, b}$ is minimal with non-constant scalar curvature.
\item [(iii)] $\hat{M}_{a, b}$ has constant sectional curvature $1/2$ (hence with constant scalar curvature) with non-constant mean curvature.
\end{itemize}

As the first important result in this paper, we classify  the hypersurfaces of $\mathbb{S}^2\times\mathbb{S}^2$
with constant sectional curvature.

\begin{theorem}\label{thm:1.1}
Let $M$ be a hypersurface of $\mathbb{S}^2\times \mathbb{S}^2$ with constant sectional curvature $\kappa$.
Then $\kappa=1/2$, the product angle function $C=0$ and  $M$  is congruent to an open part of a parallel hypersurface of a minimal hypersurface in $\mathbb{S}^2\times\mathbb{S}^2$ with $C=0$. More precisely, $M$ is locally either
\begin{enumerate}
\item[(1)]
an open part of $\hat{M}_{a,b}$ for some $a,b\in\mathbb{S}^2$; or

\item[(2)]
an open part of a parallel hypersurface of $\tilde{M}$ at distance $\frac{\pi}{2\sqrt{2}}$,
where $\tilde{M}$ is a minimal hypersurface in $\mathbb{S}^2\times\mathbb{S}^2$ with
$C=0$ described by Theorem \ref{thm:3.1}.
\end{enumerate}
\end{theorem}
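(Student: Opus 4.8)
The plan is to set up the Gauss and Codazzi equations for a hypersurface $M\subset\nks$ in terms of an adapted orthonormal frame, and then to exploit the Tsinghua principle to force strong algebraic restrictions on the second fundamental form and on the product angle function $C$. First I would recall that for $\nks$ the curvature tensor is not of constant type, but it can be written explicitly using the metric $g$ and the product structure $P$; the normal bundle decomposition of $PN$ into its tangential part (call it $\xi$, with $|\xi|^2=1-C^2$) and normal part $CN$ controls all the extra terms. Writing the shape operator as $A$ and using the constant-sectional-curvature hypothesis $R(X,Y)Z=\kappa\bigl(g(Y,Z)X-g(X,Z)Y\bigr)$, the Gauss equation becomes an equation relating $A\wedge A$ to $\kappa$ plus explicit $P$-terms; in particular, away from points where $\xi=0$ one gets that $A$ restricted to $\xi^\perp$ is essentially scalar, and one reads off candidate values of $\kappa$.

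The heart of the argument is the Tsinghua principle: rather than using Codazzi alone, I would take a suitable covariant derivative of the Codazzi equation, then commute derivatives using the Ricci identity for $A$ (viewed as a $(1,1)$-tensor), and subtract. Because the ambient curvature terms that appear are controlled by $P$, $C$ and $\xi$, the resulting identity is \emph{linear} in the components of $\nabla A$ and yields equations of the form (schematically) $(\text{combination of }\kappa\text{ and }C)\cdot(\text{components of }A)=0$. Combining these with the Gauss equation should pin down $\kappa=1/2$ and then $C\equiv 0$: one argues that any other value of $\kappa$, or any point with $C\neq 0$, leads to an overdetermined system with no solution. This is the step I expect to be the main obstacle — organizing the frame computation so that the Tsinghua identity actually closes up, and handling the case distinction between $\xi=0$ and $\xi\neq 0$ (equivalently $C^2=1$ versus $C^2<1$), since Urbano's classification of $C^2\equiv 1$ hypersurfaces as products of a curve with $\mathbb{S}^2$ must be invoked to rule out constant curvature there.

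Once $\kappa=1/2$ and $C\equiv 0$ are established, the remaining task is to show $M$ is a parallel hypersurface of a minimal hypersurface with $C=0$ and to identify the two families. With $C\equiv 0$, the tangential vector $\xi=PN$ is a globally defined unit vector field on $M$; I would analyze the principal curvatures, showing that the shape operator has a controlled form (e.g.\ eigenvalues that can be normalized by flowing along the normal geodesics). Flowing $M$ by distance $t$ along $N$ preserves $C=0$ and transforms the principal curvatures by the standard parallel-hypersurface ODEs; choosing $t$ so that the trace vanishes produces a minimal hypersurface $\tilde M$ with $C=0$, which by the companion result (Theorem 3.1, assumed) is described via the sinh-Gordon equation. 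The dichotomy in the statement then comes from whether the flow degenerates (the focal case, giving $\hat M_{a,b}$ after recognizing the limiting submanifold) or remains a genuine hypersurface at distance $\tfrac{\pi}{2\sqrt2}$; matching the explicit $\hat M_{a,b}$ against the curvature-$1/2$ condition closes case (1), and the parallel construction closes case (2). The only delicate point here is verifying that \emph{every} constant-curvature-$1/2$, $C=0$ hypersurface arises this way, i.e.\ that the normalization $t$ can always be chosen and that no exceptional orbit type is missed.
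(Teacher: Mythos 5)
Your first half follows the paper's route: the Tsinghua identity plus the adapted frame, together with Urbano's description of the $C^2\equiv1$ case, is exactly how Proposition \ref{prop:4.5} obtains $\kappa=\tfrac12$ and $C\equiv0$. One small correction there: because the curvature is constant, the Ricci-identity side of the Tsinghua computation vanishes identically, so the resulting relations ($b_3(1+C)=b_5(1-C)=b_2C=0$ in the frame \eqref{eqn:2.5}) do not involve $\kappa$ at all; $C=0$ is then forced by differentiating the Codazzi relations and reaching a contradiction when $C\neq0$, and only afterwards does the Gauss equation give $\kappa=\tfrac12$ together with the crucial algebraic constraint $b_1b_4-b_2^2=\tfrac12$.

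The genuine gaps are in the second half. First, ``choosing $t$ so that the trace vanishes'' is a pointwise prescription; to obtain a \emph{parallel} hypersurface you need a single distance that works at every point. This is precisely where the constraint $b_1b_4-b_2^2=\tfrac12$ enters: the Jacobi-field computation (Proposition \ref{prop:4.3}) shows that the mean curvature of $\Phi_r(M)$ is proportional to $(b_1+b_4)\cos(\sqrt2\,r)$, so the universal choice $r=\tfrac{\pi}{2\sqrt2}$ works independently of the point (Theorem \ref{thm:4.6}); without this observation your flow argument does not produce a parallel minimal hypersurface. Second, you must prove that $\Phi_{\pi/(2\sqrt2)}$ is an immersion: in the paper the determinant of the tangent map is $1-\tfrac{1}{\sqrt2}(b_1+b_4)\sin(\sqrt2\,r)$, and its vanishing on an open set would force constant mean curvature, which is impossible for a constant-curvature hypersurface (Remark \ref{rem:4.1aa}); so the flow never degenerates. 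Consequently your proposed mechanism for the dichotomy — a degenerate/focal case giving $\hat M_{a,b}$ versus a non-degenerate case giving (2) — is wrong. The dichotomy actually comes from the full classification of minimal hypersurfaces with $C=0$ (Theorem \ref{thm:3.2aa}, not just Theorem \ref{thm:3.1}): the minimal hypersurface $\tilde M=\Phi_{\pi/(2\sqrt2)}(M)$ is an open part of $M_0$, of $M_{a,b}$, or of a hypersurface from Theorem \ref{thm:3.1}. The $M_0$ case must be excluded because its parallels are the $M_t$, none of which has constant sectional curvature; the $M_{a,b}$ case yields $\hat M_{a,b}$ as a genuine (non-focal) parallel hypersurface at distance $\tfrac{\pi}{2\sqrt2}$; and in the third case one still has to verify that the parallel hypersurface at distance $\tfrac{\pi}{2\sqrt2}$ really has constant curvature $\tfrac12$, which the paper does by computing its Ricci tensor and invoking the Schur lemma. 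These verifications are not optional ``delicate points'' — they are the content of the final step.
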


\begin{remark}\label{rem:1.1}
(i) In Theorem	\ref{thm:3.1}, we establish  a one-to-one correspondence between the involving minimal hypersurface $\tilde{M}$ in Case (2) of  Theorem \ref{thm:1.1} and the famous
``sinh-Gordon equation''
$$
(\frac{\partial^2}{\partial u^2}+\frac{\partial^2}{\partial v^2})h
=-\tfrac{1}{\sqrt{2}}\sinh(\sqrt{2}h).
$$
In fact, we prove that $\tilde{M}$ admits coordinates $(u,v,t)$ and there is a non-zero function $h:(u,v)\to\mathbb{R}$ such that $h$ satisfies the above ``sinh-Gordon equation''. Conversely,
 for any non-zero solution $h:(u,v)\rightarrow \mathbb{R}$, we construct a metric $g$ and two
$(1,1)$-tensors $A,~\tilde{P}$ (see \eqref{eq:2.ab17},  \eqref{eqn:A} and \eqref{defP}), then by the existence theorem
and uniqueness theorems in \cite{K,L-T-V}, up to an isometry of $\mathbb{S}^2\times\mathbb{S}^2$, there exists a unique minimal hypersurface in $\mathbb{S}^2\times\mathbb{S}^2$
such that $C=0$ and its first fundamental form and shape operator  are given by $g$ and $A$,
respectively.
(ii) The hypersurface $\hat{M}_{a,b}$ in Case (1) of  Theorem \ref{thm:1.1},   is actually a parallel hypersurface of  the minimal hypersurface $M_{a,b}=\{(p,q)\in
\mathbb{S}^2\times \mathbb{S}^2|\ \langle p,a\rangle+\langle q,b\rangle=0\}$.
Note that  $M_{a,b}$ corresponds to a trivial solution of ``sinh-Gordon equation'': $h\equiv0$.
\end{remark}

As the second main result, we obtain a complete classification of
the hypersurfaces of $\mathbb{S}^2\times\mathbb{S}^2$
with constant $C$ and constant mean curvature (or constant scalar curvature),
which completes the the classification result by Urabno (see Theorem 1 in  \cite{Ur}).

\begin{theorem}\label{thm:1.2}
Let $M$ be a hypersurface of $\mathbb{S}^2\times \mathbb{S}^2$ with constant product angle function $C$. Then
\begin{enumerate}
\item[(1)]
$M$ has constant mean curvature if and only if either
\begin{enumerate}
\item[(a)]
$C^2=1$ and $M$ is an open part of $\mathbb{S}^1(r)\times \mathbb{S}^2$ for some $r\in(0,1]$, or

\item[(b)]
 $C=0$ and $M$ is an open part of $M_t$ for some $t\in(-1,1)$, or

\item[(c)]
$C=0$ and $M$ is an open part of $M_{a,b}$ for some $a,b\in\mathbb{S}^2$,
or

\item[(d)]
$C=0$ and $M$ is an open part of a minimal hypersurface described by Theorem \ref{thm:3.1}.
\end{enumerate}

\item[(2)]
$M$ has constant scalar curvature if and only if either

\begin{enumerate}
\item[(a)]
$C^2=1$ and $M$ is an open part of $\mathbb{S}^1(r)\times \mathbb{S}^2$ for some $r\in(0,1]$, or

\item[(b)]
$C=0$ and $M$ is an open part of $M_t$ for some $t\in(-1,1)$, or

\item[(c)]
$C=0$ and $M$ is an open part of $\hat{M}_{a,b}$ for some $a,b\in\mathbb{S}^2$, or

\item[(d)]
$C=0$ and $M$ is an open part of a parallel hypersurface of $\tilde{M}$ at distance $\frac{\pi}{2\sqrt{2}}$, where $\tilde{M}$ is a minimal hypersurface of
$\mathbb{S}^2\times\mathbb{S}^2$ with $C=0$ described by Theorem \ref{thm:3.1}.
\end{enumerate}

\end{enumerate}
\end{theorem}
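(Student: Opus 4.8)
The plan is to prove Theorem~\ref{thm:1.2} by reducing it to the analysis already carried out for Theorem~\ref{thm:1.1} together with Urbano's classification of hypersurfaces with $C^2\equiv1$. The first step is to dispose of the case $C^2=1$: by the result of Urbano quoted in the introduction, any hypersurface with $C^2\equiv1$ is locally a product $\gamma\times\mathbb{S}^2$ where $\gamma$ is a curve in $\mathbb{S}^2$; one checks that such a hypersurface has constant mean curvature (resp.\ constant scalar curvature) precisely when $\gamma$ has constant geodesic curvature, i.e.\ when $\gamma$ is a circle $\mathbb{S}^1(r)$, giving cases (1a) and (2a). So from now on we may assume $C^2\not\equiv1$, and the goal is to show that the open dense set where $|C|<1$ is governed by the frame and structure equations used in the proof of Theorem~\ref{thm:1.1}.

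The main step is to set up, under the hypothesis that $C$ is constant, the same moving-frame computation used in the proof of Theorem~\ref{thm:1.1}: choose an adapted orthonormal frame diagonalizing (as far as possible) the shape operator and the tensor $\tilde P$ induced by the product structure, write out the Gauss and Codazzi equations together with the compatibility equations relating $\nabla C$, the shape operator $A$ and $\tilde P$, and then impose in turn the extra hypothesis ``$H$ constant'' or ``scalar curvature constant''. The key algebraic input is that constancy of $C$ already forces strong restrictions via the equation $\nabla_X(PN)^\top$-type identities (the same identities that feed the Tsinghua principle); adding constant $H$ or constant scalar curvature then either forces $C=0$ or forces the second fundamental form to be isoparametric-like. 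When $C\neq0$ and $H$ is constant, I expect to be driven to the hypersurfaces $M_t$ (case (1b)) — note $M_t$ for $t\neq0$ has $C=0$ but three constant principal curvatures, so actually the analysis should show that constant $H$ with $C$ constant nonzero is impossible except in the product case already handled, and the genuinely new output is $C=0$. Once $C=0$ is established, one invokes the structural result behind Theorem~\ref{thm:1.1}/Theorem~\ref{thm:3.1}: a hypersurface with $C=0$ and constant $H$ is, after analyzing the remaining ODE/PDE system, either totally geodesic-type ($M_0$, a special $M_t$), a minimal $M_{a,b}$, or one of the minimal hypersurfaces of Theorem~\ref{thm:3.1} (non-minimal constant-$H$, $C=0$ examples must be ruled out, which is where the sinh-Gordon reduction shows the only constant-$H$ solutions in that family are the minimal ones). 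For the scalar curvature statement, constant scalar curvature with $C=0$ unwinds — via the Gauss equation expressing scalar curvature in terms of $\|A\|^2$, $H$ and $C$ — to constancy of $\|A\|^2$, and combined with the Codazzi system this pins down $M_t$, the constant-curvature-$1/2$ examples $\hat M_{a,b}$, and the parallel hypersurfaces of the Theorem~\ref{thm:3.1} minimal hypersurfaces at distance $\tfrac{\pi}{2\sqrt2}$ (which by Theorem~\ref{thm:1.1} are exactly the constant sectional curvature $1/2$ hypersurfaces, hence have constant scalar curvature).

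For the converse directions one simply verifies that each listed hypersurface has the asserted property: the products $\mathbb{S}^1(r)\times\mathbb{S}^2$ and the isoparametric family $M_t$ have constant principal curvatures hence constant $H$ and constant scalar curvature and constant $C$; $M_{a,b}$ is minimal with $C=0$ (so constant $H=0$); the hypersurfaces of Theorem~\ref{thm:3.1} are minimal with $C=0$; $\hat M_{a,b}$ and more generally the parallel hypersurfaces at distance $\tfrac{\pi}{2\sqrt2}$ have constant sectional curvature $1/2$ hence constant scalar curvature, with $C=0$ coming from the parallel-hypersurface construction in Section~\ref{sect:1}'s examples. These verifications are routine given the computations already in the paper.

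The main obstacle will be the case distinction in part (1) when $C$ is constant but possibly nonzero: showing that constant mean curvature together with constant $C\neq0$ and $C^2\neq1$ cannot occur (equivalently, that it forces $C=0$) requires extracting enough from the Codazzi and compatibility equations to rule out an intermediate family, and this is exactly the delicate part of the Tsinghua-principle bookkeeping. Once $C=0$ is in hand, the remaining work overlaps heavily with the proof of Theorem~\ref{thm:1.1} and of Theorem~\ref{thm:3.1}, so the novelty is mostly in correctly packaging those results; I would structure the write-up so that the $C=0$ reduction is proved first as a lemma, and then cite Theorem~\ref{thm:3.1} for the explicit description.
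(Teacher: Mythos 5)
Your plan diverges from the paper in a way that leaves its central step unproved. The paper's proof of Theorem \ref{thm:1.2} is essentially a citation argument: Urbano's Theorem 1 in \cite{Ur} already provides the reduction, namely that constant $C$ together with constant mean curvature forces either $C^2=1$ (products), or $C=0$ with $M$ an open part of some $M_t$ or $M$ minimal, and that constant $C$ together with constant scalar curvature forces either $C^2=1$, or $C=0$ with $M_t$, or the situation covered by constant sectional curvature; the paper's own contribution is then only cases (1d) and (2d), obtained by plugging in Theorem \ref{thm:3.2aa} (classification of minimal hypersurfaces with $C=0$) and Theorem \ref{thm:1.1} (classification of the constant sectional curvature hypersurfaces). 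You propose instead to re-derive the whole reduction by moving frames, but the two steps you yourself identify as the crux --- excluding constant $C\notin\{0,\pm1\}$ under constant $H$ (resp.\ constant $\rho$), and determining what constant $H$ or constant $\rho$ forces once $C=0$ --- are left as expectations (``I expect to be driven to\,\dots'', ``requires extracting enough from the Codazzi and compatibility equations''), and you never invoke Urbano's Theorem 1 as a substitute. As written, the core of the argument is missing.

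Beyond the missing reduction, two concrete assertions in your sketch are wrong. First, for part (1): after reducing to $C=0$ you claim the outcome is $M_0$, $M_{a,b}$ or a hypersurface of Theorem \ref{thm:3.1}, and that ``non-minimal constant-$H$, $C=0$ examples must be ruled out''; but the non-minimal $M_t$ with $t\neq 0$ are precisely such examples and they constitute case (1b) of the theorem. The correct (nontrivial) statement is that $C=0$ and constant $H$ force either an $M_t$ or $H=0$, which is exactly the content of Urbano's Theorem 1(1) and is not addressed by your sketch. Second, for part (2): constant scalar curvature with $C=0$ does \emph{not} ``unwind to constancy of $\|A\|^2$''. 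By \eqref{eqn:scalar}, $\rho=2+9H^2-\|A\|^2$, and $H$ need not be constant; indeed $\hat{M}_{a,b}$ (case (2c)) has constant $\rho$ but non-constant $H$, hence non-constant $\|A\|^2$. The actual route is that constant $\rho$ and constant $C$ force $C^2=1$, or $M_t$, or constant sectional curvature $\tfrac12$, after which Theorem \ref{thm:1.1} yields (2c) and (2d). To make your write-up complete you must either prove these two reductions yourself (e.g.\ from the structure equations of Lemma \ref{lemma:2.3} together with \eqref{eqn:2.3}), or cite Urbano's Theorem 1 for them, as the paper does.
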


\begin{remark}\label{rem:1.2}
Our contributions in Theorem \ref{thm:1.2}  are Case (1d) and Case (2d).
Case (1a)--Case(1c) and  Case (2a)--Case(2c) were obtained by Urbano
(see   Theorem 1 in  \cite{Ur}).
	\end{remark}

The paper is organized as follows. In Section \ref{sect:2}, we collect some basic
properties of $\mathbb{S}^2\times\mathbb{S}^2$ and some preliminaries of the geometry of the hypersurfaces of $\mathbb{S}^2\times\mathbb{S}^2$.
In Section \ref{sect:3}, we give a complete classification of minimal hypersurfaces of $\mathbb{S}^2\times\mathbb{S}^2$
with $C=0$ (see Theorem \ref{thm:3.2aa}). It turns out that the resulting hypersurfaces can only be $M_0$,  $M_{a,b}$, or  one of the hypersurfaces described by Theorem \ref{thm:3.1}.  In order to obtain the classification, we discuss all the possibilities of components of the second fundamental form.  It is worth mentioning that we construct new examples of hypersurfaces with constant $C$ and give local characterizations of them. In fact, these examples are all the  hypersurfaces of $\mathbb{S}^2\times\mathbb{S}^2$
with constant $C\neq\pm 1$ and $b_2=0$ (see \eqref{eqn:2.7} for the definition of $b_2$). See Proposition \ref{prop:3.61}
for more details. In Sections \ref{sect:4}, we present the proofs of
Theorem \ref{thm:1.1} and Theorem \ref{thm:1.2}. We first apply the so-called Tsinghua principle
to prove that the constant sectional curvature can only be $\frac{1}{2}$ and
the product angle function $C$ is identically zero. This is one of key steps in the proof of Theorem \ref{thm:1.1}. Another important ingredient in the proof is an observation that any hypersurface with constant sectional curvature is a parallel hypersurface of a minimal hypersurface with $C=0$ (see Theorem \ref{thm:4.6}). Thus we can apply the classification result in Section
 \ref{sect:3} to complete the proofs.

\textbf{Acknowledgments:}
H. Li  was supported by NSFC Grant No. 11831005, NSFC Grant No. 12126405
and NSFC-FWO Grant No. 11961131001. L. Vrancken was supported by NSFC-FWO 11961131001. 
X. Wang was supported by NSFC Grant No. 11971244 and the Fundamental Research Funds
for the Central Universities. Z. Yao was supported
by NSFC Grant No. 12171437 and China Postdoctoral Science Foundation
(No.2022M721871).

%%%%%%%%%%%%%%%%%%%%%%%%%%%%%%%%%%%%%%%%%%%%%
\section{Preliminaries} \label{sect:2}
%%%%%%%%%%%%%%%%%%%%%%%%%%%%%%%%%%%%%%%%%%%%%

\subsection{The geometric structure on $\mathbb{S}^2\times\mathbb{S}^2$}\label{sect:2.1}~

Let $\mathbb{S}^{2}$ be the $2$-dimensional unit sphere with standard metric
$\langle\cdot,\cdot\rangle $ and complex structure $J$ defined by
$$
J_pv=p\wedge v,
$$
for any $p\in\mathbb{S}^2$ and $v\in T_p\mathbb{S}^2$, where $\wedge$ stands
for the cross product in $\mathbb{R}^3$. On the product space $\mathbb{S}^2\times\mathbb{S}^2$
with the product metric denoted by $g$, we
have two complex structures
$$
J_{1}=(J, J), \quad J_{2}=(J,-J),
$$
which define two K\"ahler structures on $\mathbb{S}^2\times\mathbb{S}^2$, so that
$\mathbb{S}^2\times\mathbb{S}^2$ becomes a K\"ahler surface.

The product structure $P$ on $\mathbb{S}^2\times\mathbb{S}^2$ is defined by
$P: T(\mathbb{S}^2\times\mathbb{S}^2)\rightarrow T(\mathbb{S}^2\times\mathbb{S}^2)$ such that
$$
P(X_1,X_2)=(X_1,-X_2), \quad \forall\, X_1, X_2\in T\mathbb{S}^2.
$$
$P$ satisfies that  $P=-J_1J_2=-J_2J_1$, $P^2=\mathrm{Id}$ and  $\bar{\nabla} P=0$, where $\bar{\nabla}$ is the Levi-Civita connection on
$\mathbb{S}^2\times\mathbb{S}^2$. Moreover,
$$
g(PY, Z)=g(Y,PZ), \quad \forall\,Y,Z\in T(\mathbb{S}^2\times\mathbb{S}^2).
$$

The Riemannian curvature tensor $\bar{R}$ of $\mathbb{S}^2\times\mathbb{S}^2$ with the product
metric is given by
\begin{equation*}
\begin{aligned}
\bar{R}(U,Y,Z,W)=\tfrac{1}{2}&\big\{g(Y,Z)g( U,W)-g( U,Z)g( Y,W)\\
&+g( PY,Z)g( PU,W)-g( PU,Z)g( PY,W)\big\},
\end{aligned}
\end{equation*}
where $U, Y, Z, W\in T(\mathbb{S}^2\times\mathbb{S}^2)$. Thus, $\mathbb{S}^2\times\mathbb{S}^2$
is an Einstein manifold with scalar curvature $4$ and nonnegative sectional curvature.

\subsection{Hypersurfaces of $\mathbb{S}^2\times\mathbb{S}^2$}\label{sect:2.2}~
%%%%%%%%%%%%%%%%%%%%%%%%%%%%%%%%%%%%%%%%%%%%%

Let $M$ be an orientable hypersurface of $\mathbb{S}^2\times\mathbb{S}^2$ with
$N$ a unit normal vector field and still $g$ the induced metric on $M$. Then, with respect
to the product structure $P$, we define by
\begin{align*}
C&:=g(PN,N)=g(J_1 N,J_2 N),\\
X&:=PN-CN,
\end{align*}
the product angle function $C: M\rightarrow\mathbb{R}$ and a vector field
$X$ tangent to $M$. It is clear that $-1\leq C\leq 1$ and $|X|^2:=g(X,X)=1-C^2$.
For any tangential vector field $Y$ of $M$, acting by the product structure $P$,
we have the following decomposition
$$
PY=T Y+\mu(Y)N,
$$
where $T Y$ and $\mu(Y)N$ are the tangential
and normal part of $PY$. Thus $T$ is a
tensorial field of type $(1,1)$, and $\mu$ is a $1$-form over $M$.
Moreover, $\mu(Y)=g(PY, N)$.

Let $\nabla$ be the Levi-Civita connection of the induced metric $g$ on $M$. The Gauss
and Weingarten formulae say that
\begin{align*}
\bar{\nabla}_Y Z=\nabla_Y Z+g(AY,Z)N, \quad \bar{\nabla}_Y N=-AY,
\end{align*}
where $A$ is the shape operator of $M$.

The Gauss and Codazzi equations of $M$ are given by
\begin{equation}\label{eqn:2.1}
\begin{aligned}
R(U, Y)Z=&\tfrac{1}{2}\big[g(Y,Z)U-g(U,Z)Y
+g(TY,Z)TU-g(TU,Z)TY\big]\\
&+g(AY,Z)AU-g(AU,Z)AY,
\end{aligned}
\end{equation}
\begin{equation}\label{eqn:2.2}
(\nabla_YA)Z-(\nabla_Z A)Y=\tfrac{1}{2}\big[g(Y,X)TZ
-g(Z, X)TY\big],
\end{equation}
where $U, Y, Z\in TM$, and $R$ denotes the curvature tensor of $M$ with respect
to the metric $g$.
Thus the Ricci curvature tensor is given by
\begin{equation}\label{eqn:2.3}
\operatorname{Ric}(Y)=\tfrac12\big(Y-CTY+g(Y,X)X\big)+3HAY-A^2Y,
\end{equation}
where $H=\frac{1}{3}{\rm Tr}A$ is the mean curvature of the hypersurface $M$.
It follows that the scalar curvature $\rho$ of $M$ is given by
\begin{equation}\label{eqn:scalar}
\rho=2+9H^{2}-\|A\|^{2}.
\end{equation}

Let $\nabla^2 A$ denote the second covariant derivative of $A$, i.e.,
$$
(\nabla^{2}A)(U,Y,Z):=\nabla_U[(\nabla_Y A)Z]-(\nabla_{\nabla_U Y}
A)Z-(\nabla_Y A){\nabla_U Z}.
$$
Then the Ricci identity states that
\begin{equation}\label{eqn:ric}
\begin{split}
g&\big((\nabla^{2}A)(U,Y,Z),W\big)-g\big((\nabla^{2}A)(Y,U,Z),W\big)\\
&=-g\big(R(U,Y)Z,AW\big)-g\big(R(U,Y)W,AZ\big).
\end{split}
\end{equation}

The following lemma, obtained in \cite{G-H-M-Y,Ur}, describes some properties
of the function $C$, the vector field $X$ and the covariant derivatives of $T$ and $\mu$,
which will be used later.
\begin{lemma}[cf. Lemma 1 of \cite{Ur} and Lemma 2.1 of \cite{G-H-M-Y}]\label{lemma:2.1}
Let $M\hookrightarrow\mathbb{S}^2\times\mathbb{S}^2$ be an
orientable hypersurface and $A$ the shape operator associated
to the unit normal field $N$. Then the gradient of $C$, the
covariant derivatives of $X$, $T$ and $\mu$ are given by
\begin{equation}\label{eqn:2.4}
\begin{split}
&\nabla C=-2 AX,\ \ \nabla_{Y}X=C AY -T A Y,\\
&\left(\nabla_Y T\right) Z =g(AY,Z)X+\mu(Z) A Y,\\
&\left(\nabla_Y \mu\right) Z=C g(AY,Z)-g(TZ,AY).
\end{split}
\end{equation}
\end{lemma}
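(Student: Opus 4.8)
The plan is to derive all four identities by the same mechanism: differentiate the defining relations $C=g(PN,N)$, $X=PN-CN$ and $PY=TY+\mu(Y)N$, use the parallelism $\bar{\nabla}P=0$ together with the Gauss and Weingarten formulae, and then separate tangential and normal components.

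First I would compute $\nabla C$. Writing $Y(C)=g(\bar{\nabla}_Y(PN),N)+g(PN,\bar{\nabla}_Y N)$ and using $\bar{\nabla}_Y(PN)=P(\bar{\nabla}_Y N)=-P(AY)=-TAY-\mu(AY)N$ together with $\bar{\nabla}_Y N=-AY$, the right-hand side becomes $-\mu(AY)-g(PN,AY)$. Since $AY$ is tangent to $M$, the $N$-component $CN$ of $PN$ contributes nothing, so $g(PN,AY)=g(X,AY)$; similarly $\mu(AY)=g(P(AY),N)=g(AY,PN)=g(AY,X)$ by symmetry of $P$. Hence $Y(C)=-2g(AX,Y)$, which is $\nabla C=-2AX$.

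Next, differentiating $X=PN-CN$ gives $\bar{\nabla}_Y X=-P(AY)-Y(C)N+CAY=CAY-TAY-(\mu(AY)+Y(C))N$; comparing with the Gauss formula $\bar{\nabla}_Y X=\nabla_Y X+g(AY,X)N$ (valid since $X$ is tangent) and reading off the tangential part yields $\nabla_Y X=CAY-TAY$, the normal parts agreeing automatically by the previous step. For the last two identities I would expand $\bar{\nabla}_Y(PZ)$ in two ways: on one hand $P(\bar{\nabla}_Y Z)=P(\nabla_Y Z+g(AY,Z)N)=T\nabla_Y Z+\mu(\nabla_Y Z)N+g(AY,Z)(X+CN)$, and on the other hand $\bar{\nabla}_Y(TZ+\mu(Z)N)=\nabla_Y(TZ)+g(AY,TZ)N+Y(\mu(Z))N-\mu(Z)AY$. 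Equating tangential parts gives $(\nabla_Y T)Z=\nabla_Y(TZ)-T\nabla_Y Z=g(AY,Z)X+\mu(Z)AY$, and equating normal parts gives $(\nabla_Y\mu)Z=Y(\mu(Z))-\mu(\nabla_Y Z)=Cg(AY,Z)-g(TZ,AY)$.

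The argument is essentially mechanical, so there is no serious obstacle beyond careful bookkeeping of which terms are tangential and which are normal, and repeated use of the fact that $P$ is $g$-symmetric and $\bar{\nabla}$-parallel. The only step requiring a little care is identifying $g(PN,AY)$ and $\mu(AY)$ with $g(X,AY)$, which holds because the $N$-component of $PN$ equals $CN$ while $AY\perp N$.
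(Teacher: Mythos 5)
Your proposal is correct and follows essentially the same route as the paper: the paper cites Urbano for the first two identities (which you verify by the same direct differentiation of $C=g(PN,N)$ and $X=PN-CN$), and for the last two it likewise expands $\bar{\nabla}_Y(PZ)$ using $\bar{\nabla}P=0$ and the Gauss--Weingarten formulae, separating tangential and normal parts exactly as you do. All the bookkeeping in your argument (in particular the identification $\mu(AY)=g(AY,X)=g(PN,AY)$ since $AY\perp N$) checks out.
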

\begin{proof}
The formulae of the gradient of $C$ and the covariant derivative of $X$ can be directly found
in Lemma 1 of \cite{Ur}. For the rest two formulae, by the definitions of $T$, $\mu$
and the Gauss and Weingarten formulae, we get
\begin{align*}
\left(\nabla_{Y} T\right) Z &=(\bar{\nabla}_{Y} (T Z))^{\top}-T (\nabla_{Y} Z) \\
&=[\bar{\nabla}_{Y}(P Z-\mu(Z) N)]^{\top}-T (\nabla_{Y} Z) \\
&=[P \bar{\nabla}_{Y} Z-\mu(Z) \bar{\nabla}_{Y} N]^{\top}-T (\nabla_{Y} Z) \\
&=g(AY,Z)(P N)^{\top}+\mu(Z) A Y\\
&=g(AY,Z)X+\mu(Z) A Y,
\end{align*}
\begin{align*}
\left(\nabla_{Y} \mu\right) Z&= Y \mu(Z)-\mu\left(\nabla_{Y} Z\right) \\
&=g(\bar{\nabla}_{Y} PZ,N)+g(PZ,\bar{\nabla}_{Y} N)-g(P\bar{\nabla}_{Y} Z-g(AY,Z) PN,N)\\
&=-g(TZ,AY)+Cg(AY,Z),
\end{align*}
where $\cdot^\top$ denotes the tangential part.
\end{proof}

In \cite{Ur}, Urbano proved that hypersurface with $C^2\equiv 1$ is locally
the following hypersurfaces.
\begin{lemma}[\cite{Ur}]\label{lem:2.2}
Let $M$ be a hypersurface of $\mathbb{S}^2\times\mathbb{S}^2$ with $C^2\equiv 1$.
Then, up to isometries of $\mathbb{S}^2\times\mathbb{S}^2$, $M$ is locally the
product of a curve $\Gamma$ in $\mathbb{S}^2$ and an open
subset of $\mathbb{S}^2$.
\end{lemma}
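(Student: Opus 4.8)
The plan is to reduce everything to the eigenstructure of the product structure $P$ along $M$. Since $C^2\equiv1$, the identity $|X|^2=1-C^2$ forces $X=PN-CN\equiv0$, so $PN=CN$; being continuous with $C^2\equiv1$, the function $C$ is locally constant, and since interchanging the two $\mathbb{S}^2$-factors is an isometry of $\mathbb{S}^2\times\mathbb{S}^2$ conjugating $P$ to $-P$, we may work on a neighborhood where $C\equiv1$. Recalling that $P=(\mathrm{Id},-\mathrm{Id})$ on $T\mathbb{S}^2\oplus T\mathbb{S}^2$, the condition $PN=N$ says precisely that $N$ is everywhere tangent to the first factor, i.e. $N_{(p,q)}=(n(p,q),0)$ with $n(p,q)\in T_p\mathbb{S}^2$ a unit vector.

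Next I would extract the splitting of $TM$. Since $N$ lies in the first factor, the whole second-factor tangent space $\{0\}\oplus T_q\mathbb{S}^2$ is $g$-orthogonal to $N$ and hence contained in $T_{(p,q)}M=N^{\perp}$; denote this rank-$2$ distribution by $V_-$, and let $V_+$ be its orthogonal complement in $TM$, which is the rank-$1$ distribution $(n^{\perp}\cap T_p\mathbb{S}^2)\oplus\{0\}$ sitting inside the first factor. For $Y\in TM$ one computes $\mu(Y)=g(PY,N)=g(Y,PN)=g(Y,N)=0$, so $PY=TY$ is tangent and $T=P|_{M}$; moreover Lemma \ref{lemma:2.1} gives $(\nabla_YT)Z=g(AY,Z)X+\mu(Z)AY=0$, so $T$ is parallel. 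Since $T^2=\mathrm{Id}$ with eigenvalues $+1,-1$ of multiplicities $1$ and $2$, the eigendistributions $V_+,V_-$ are complementary, $\nabla$-parallel, and in particular integrable.

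Then I would invoke the local de Rham decomposition theorem: $M$ is locally isometric to a Riemannian product $M_+\times M_-$ with $TM_{\pm}=V_{\pm}$. As the leaves of $V_-$ are integral submanifolds of $\{0\}\oplus T\mathbb{S}^2$, the first $\mathbb{S}^2$-coordinate is constant along each such leaf, so every leaf of $V_-$ is an open subset of a slice $\{p\}\times\mathbb{S}^2$; similarly each leaf of $V_+$ lies in a slice $\mathbb{S}^2\times\{q\}$ and is therefore a curve $\Gamma$ in $\mathbb{S}^2$. Fixing a base point $(p_0,q_0)\in M$ and using the local product structure, every nearby point of $M$ is obtained by first moving within $\mathbb{S}^2\times\{q_0\}$ along $V_+$ (sweeping out a curve $\Gamma\times\{q_0\}$) and then within $\{p\}\times\mathbb{S}^2$ along $V_-$ (sweeping out $\{p\}\times\Omega$); this identifies a neighborhood of $(p_0,q_0)$ in $M$ with the standard product embedding $\Gamma\times\Omega\hookrightarrow\mathbb{S}^2\times\mathbb{S}^2$ of a curve $\Gamma\subset\mathbb{S}^2$ and an open set $\Omega\subset\mathbb{S}^2$. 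The case $C\equiv-1$ is identical after interchanging the factors.

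I expect the only delicate point to be this last step: passing from the abstract data — two complementary parallel distributions, each of whose leaves happens to lie in an ambient factor-slice — to the genuinely \emph{congruent} conclusion, namely that the neighborhood is an honest product $\Gamma\times\Omega$ and not merely a nontrivial fibration over $\Gamma$. This is where one must combine $\dim V_+ + \dim V_- = \dim M$ with the constancy of the factor-coordinates along the two leaf-foliations, and keep in mind throughout that the assertion is purely local, so that completeness of $M$ is never needed.
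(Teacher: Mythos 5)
Your proof is correct. Note first that the paper itself gives no argument for this lemma --- it is quoted verbatim from Urbano's paper \cite{Ur} --- so there is no in-paper proof to compare against; your write-up in effect supplies a self-contained proof of a cited fact. The chain of reductions is sound: $C^2\equiv 1$ forces $X=0$ and $\mu\equiv 0$, hence $P$ preserves $TM$ and $T=P|_{TM}$ is a parallel involution by Lemma \ref{lemma:2.1}; its eigendistributions $V_{\pm}$ are then complementary, involutive, and tangent to the two ambient factors once the factor-swap isometry normalizes $C\equiv 1$. The ``delicate point'' you flag at the end is in fact already closed by your own argument: the simultaneous Frobenius (or local de Rham) chart gives a parametrization $F(s,y)$ of $M$ with $\partial_s\in V_+$ and $\partial_y\in V_-$, and writing $F=(F_1,F_2)$ into the two factors, the tangency conditions give $\partial_yF_1=0$ and $\partial_sF_2=0$, so $F(s,y)=(\gamma(s),\omega(y))$ and the neighborhood is literally $\Gamma\times\Omega$ with $\Gamma=\gamma(I)$ a curve and $\Omega=\omega(U)$ open; no fibration ambiguity can survive this. (Parallelism of $V_\pm$ is more than you need for the set-theoretic product --- integrability suffices --- but it costs nothing and also shows the induced metric is the product metric.)
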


\subsection{A canonical frame related to hypersurfaces of
$\mathbb{S}^2\times\mathbb{S}^2$}\label{sect:2.3}~

In this subsection, we assume that $|C|< 1$ holds on $M$,
and choose an appropriate local orthonormal frame fields $\{E_1, E_2, E_3\}$
of $M$ as follows:
\begin{equation}\label{eqn:2.5}
E_1=\frac{J_1N+J_2 N}{\sqrt{2(1+C)}},\
\ E_2=\frac{J_1N-J_2 N}{\sqrt{2(1-C)}},\ \ E_3=\frac{X}{\sqrt{1-C^{2}}}.
\end{equation}

This frame has the following properties:
\begin{equation}\label{eqn:2.6}
\left\{
\begin{aligned}
&P E_1=E_1,\ \ P E_2=-E_2,\ \ P E_3=\sqrt{1-C^{2}}N-C E_3,\ \ PN=CN+\sqrt{1-C^{2}}E_3.\\
&TE_1=E_1,\ \ TE_2=-E_2,\ \ TE_3=-CE_3,\ \ \mu(E_1)=\mu(E_2)=0,\ \ \mu(E_3)=\sqrt{1-C^{2}}.
\end{aligned}\right.
\end{equation}

We assume that
\begin{equation}\label{eqn:2.7}
\begin{aligned}	
&A E_1=b_1 E_1+b_2 E_2+b_3 E_3, \\
&A E_2=b_2 E_1+b_4 E_2+b_5 E_3,\\
&A E_3=b_3 E_1+b_5E_2+b_6 E_3,
\end{aligned}
\end{equation}
where $b_1$, $b_2$, $b_3$, $b_4$, $b_5$ and $b_6$ are smooth
functions on $M$.

By \eqref{eqn:2.3}, under the frame $\{E_1,E_2,E_3\}$, the Ricci tensor is expressed by
\begin{equation}\label{eqn:2.8}
\begin{aligned}
&{\rm Ric}(E_1)=(\tfrac{1-C}{2}
+b_1b_4+b_1b_6-b_2^{2}-b_3^{2}) E_1+
(b_2b_6-b_3 b_5) E_2+(b_3b_4-b_2b_5) E_3, \\
&{\rm Ric}(E_2)=(b_2b_6-b_3 b_5) E_1
+(\tfrac{1+C}{2}+b_1b_4+b_4b_6-b_2^{2}-b_5^{2}) E_2+(b_1b_5-b_2b_3) E_3,\\
&{\rm Ric}(E_3)=(b_3b_4-b_2b_5) E_1+(b_1b_5-b_2b_3) E_2+(1+b_1b_6+b_4b_6-b_3^{2}-b_5^{2}) E_3.
\end{aligned}
\end{equation}

When $C$ is constant which is not equal to $\pm1$ on $M$, we have the following lemma:

\begin{lemma}[cf. P.1395 of \cite{Ur}]\label{lemma:2.3}
Let $M$ be a hypersureface of $\mathbb{S}^2\times \mathbb{S}^2$ with constant $C\neq\pm1$.
Then, under the local orthonormal frame fields $\{E_1, E_2, E_3\}$,
the following relations hold:
\begin{equation}\label{eqn:2.25}
\begin{aligned}
&b_3=b_5=b_6=0, \ \ A E_1=b_1 E_1+b_2 E_2,\ A E_2=b_2 E_1+b_4 E_2,\ A E_3=0,\\		
&\nabla_{E_1}E_1=b_1\sqrt{\tfrac{1-C}{1+C}}E_3,\
\nabla_{E_1}E_2=-b_2\sqrt{\tfrac{1+C}{1-C}}E_3,\ \nabla_{E_1}E_3=-b_1\sqrt{\tfrac{1-C}{1+C}}E_1
+b_2\sqrt{\tfrac{1+C}{1-C}}E_2,\\
&\nabla_{E_2} E_1=b_2\sqrt{\tfrac{1-C}{1+C}}E_3,\
\nabla_{E_2} E_2=-b_4\sqrt{\tfrac{1+C}{1-C}}E_3,\ \nabla_{E_2}E_3=-b_2\sqrt{\tfrac{1-C}{1+C}}E_1
+b_4\sqrt{\tfrac{1+C}{1-C}}E_2,\\
&\nabla_{E_3}E_1=\nabla_{E_3}E_2=\nabla_{E_3}E_3=0.
\end{aligned}
\end{equation}
Moreover, the Gauss and Codazzi equations are equivalent to the following
equations:
\begin{equation}\label{eqn:2.26}
E_3b_1=\tfrac{\sqrt{1-C^2}}{2}+b_1^{2}\sqrt{\tfrac{1-C}{1+C}}-b_2^2\sqrt{\tfrac{1+C}{1-C}},
\end{equation}
\begin{equation}\label{eqn:2.27}
E_3b_2=b_2\big(b_1\sqrt{\tfrac{1-C}{1+C}}-b_4\sqrt{\tfrac{1+C}{1-C}}\big),
\end{equation}
\begin{equation}\label{eqn:2.28}
E_3b_4=-\tfrac{\sqrt{1-C^2}}{2}+b_2^{2}\sqrt{\tfrac{1-C}{1+C}}-b_4^2\sqrt{\tfrac{1+C}{1-C}},
\end{equation}
\begin{equation}\label{eqn:2.29}
E_1b_2-E_2b_1=0,	
\end{equation}
\begin{equation}\label{eqn:2.30}
E_1b_4-E_2b_2=0.
\end{equation}
\end{lemma}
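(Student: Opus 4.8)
The plan is to verify Lemma \ref{lemma:2.3} by substituting the standing hypothesis $C=\mathrm{const}\neq\pm1$ into the structural identities of Lemma \ref{lemma:2.1}, then reading off the geometry of the chosen frame \eqref{eqn:2.5}, and finally translating the Gauss \eqref{eqn:2.1} and Codazzi \eqref{eqn:2.2} equations into scalar form. The very first observation is decisive: since $\nabla C=-2AX$ and $C$ is constant, we get $AX=0$, hence $AE_3=0$. This immediately forces $b_3=b_5=b_6=0$ in \eqref{eqn:2.7}, since those are precisely the $E_3$-components of $AE_1,AE_2,AE_3$ and symmetry gives $g(AE_1,E_3)=g(AE_3,E_1)=0$, etc. So the shape operator has the claimed block form $AE_1=b_1E_1+b_2E_2$, $AE_2=b_2E_1+b_4E_2$, $AE_3=0$.

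Next I would compute the Levi-Civita connection of $g$ in the frame $\{E_1,E_2,E_3\}$. The key input is the third and fourth formulas of \eqref{eqn:2.4}: $(\nabla_YT)Z=g(AY,Z)X+\mu(Z)AY$ and $(\nabla_Y\mu)Z=Cg(AY,Z)-g(TZ,AY)$, together with $\nabla_YX=CAY-TAY$ and the explicit values of $T$ and $\mu$ on the frame from \eqref{eqn:2.6}. Since $TE_1=E_1$, $TE_2=-E_2$, $TE_3=-CE_3$ are ``eigen-relations'' with distinct eigenvalues $1,-1,-C$ (here $C\neq\pm1$ is exactly what keeps these three eigenvalues distinct), differentiating $TE_i$ and matching components pins down all the $\nabla_{E_i}E_j$. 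Concretely, writing $X=\sqrt{1-C^2}\,E_3$ and using $AE_3=0$, the identity $\nabla_YX=CAY-TAY$ gives, for $Y=E_1$: $\sqrt{1-C^2}\,\nabla_{E_1}E_3=(C-T)(b_1E_1+b_2E_2)=(C-1)b_1E_1+(C+1)b_2E_2$, which yields $\nabla_{E_1}E_3=-b_1\sqrt{\tfrac{1-C}{1+C}}E_1+b_2\sqrt{\tfrac{1+C}{1-C}}E_2$ after dividing by $\sqrt{1-C^2}$; metric compatibility then gives $\nabla_{E_1}E_1$ and $\nabla_{E_1}E_2$ up to their $E_3$-components, and the $(\nabla_{E_1}T)$ or $(\nabla_{E_1}\mu)$ identity supplies the remaining scalars, producing the listed $\nabla_{E_1}E_1=b_1\sqrt{\tfrac{1-C}{1+C}}E_3$ and $\nabla_{E_1}E_2=-b_2\sqrt{\tfrac{1+C}{1-C}}E_3$. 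The same procedure with $Y=E_2$ gives the second row, and with $Y=E_3$, since $AE_3=0$, all of $\nabla_{E_3}X$, $(\nabla_{E_3}T)$, $(\nabla_{E_3}\mu)$ vanish, forcing $\nabla_{E_3}E_i=0$ for all $i$ (the eigenvalue distinctness again rules out nontrivial rotation among $E_1,E_2,E_3$). This establishes \eqref{eqn:2.25}.

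With the connection in hand, the remaining equations \eqref{eqn:2.26}--\eqref{eqn:2.30} are just the Codazzi equation \eqref{eqn:2.2} written out on frame triples. Using $(\nabla_YA)Z=\nabla_Y(AZ)-A(\nabla_YZ)$ and the connection formulas, I would evaluate $(\nabla_{E_i}A)E_j-(\nabla_{E_j}A)E_i$ and equate it to $\tfrac12[g(E_i,X)TE_j-g(E_j,X)TE_i]$, noting $g(E_i,X)=\sqrt{1-C^2}\,\delta_{i3}$. The triple $(E_1,E_3)$ gives, comparing $E_1$-components, equation \eqref{eqn:2.26} for $E_3b_1$; comparing $E_2$-components of the same triple and of $(E_2,E_3)$ gives \eqref{eqn:2.27}; the triple $(E_2,E_3)$ comparing $E_2$-components gives \eqref{eqn:2.28}; and the triple $(E_1,E_2)$, whose right-hand side vanishes because neither index is $3$, gives the two ``curl-free'' relations \eqref{eqn:2.29} and \eqref{eqn:2.30} for $b_1,b_2,b_4$. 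Finally I should check that the Gauss equation \eqref{eqn:2.1} carries no new information: plugging in $AE_3=0$, the block form of $A$, and $TE_i$, one verifies that the sectional-curvature identities it encodes are already consequences of \eqref{eqn:2.26}--\eqref{eqn:2.28} (this is the standard fact in dimension three that Gauss is implied once Codazzi and the frame structure are fixed), so the Gauss and Codazzi equations are indeed equivalent to the displayed system. The main obstacle is purely bookkeeping: keeping track of the many $\sqrt{\tfrac{1\pm C}{1\mp C}}$ factors and sign conventions when expanding $(\nabla_{E_i}A)E_j$, since an error in the connection coefficients propagates into every Codazzi component; I would organize the computation by first fixing all $\nabla_{E_i}E_j$ once and for all, then mechanically substituting.
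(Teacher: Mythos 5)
Your proposal is correct and follows the route the paper itself relies on (the paper gives no proof of Lemma \ref{lemma:2.3}, citing Urbano's computation): constancy of $C$ plus $\nabla C=-2AX$ kills $b_3,b_5,b_6$, the identities of Lemma \ref{lemma:2.1} together with $TE_1=E_1$, $TE_2=-E_2$, $TE_3=-CE_3$ (distinct eigenvalues since $C\neq\pm1$) determine all $\nabla_{E_i}E_j$, and the frame components of Codazzi yield \eqref{eqn:2.26}--\eqref{eqn:2.30} while those of Gauss reduce to them. The only caveat is your parenthetical appeal to a ``standard fact'' that Gauss follows from Codazzi in dimension three --- that is not a general principle, but it is harmless here because the direct verification you propose (expanding $R(E_i,E_j)E_k$ from \eqref{eqn:2.25} and comparing with \eqref{eqn:2.1}) does go through.
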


\begin{remark}
	Due to the differences between the subscripts of the frame, the formulae in Lemma \ref{lemma:2.3} are slightly different from that on Page 1395 of \cite{Ur}.
	\end{remark}

%==================================================
\section{Minimal hypersurfaces with $C=0$}\label{sect:3}~
In this section, we  give the classification of the minimal hypersurfaces with $C=0$,
which plays an very important role in the proof of Theorem \ref{thm:1.1} and Theorem \ref{thm:1.2}.

\begin{lemma}\label{lemma:2.1aaa1}
Let $M$ be a minimal hypersureface of $\mathbb{S}^2\times \mathbb{S}^2$ with $C=0$.
Then, under the frame $\{E_1,E_2,E_3\}$ defined by \eqref{eqn:2.5}, we have three cases
of $b_1,b_2$ and $b_4$ on $M$ as follows.

\noindent{\bf Case 1}:
$$
b_1=b_4=0,\ \ b_2=\tfrac{1}{\sqrt{2}},
$$

\noindent{\bf Case 2}:
$$
b_1=-b_4=\tfrac{1}{\sqrt{2}}\tan(\tfrac{t-k}{\sqrt{2}}),\ \ b_2=0,
$$

\noindent{\bf Case 3}:
\begin{equation}\label{eq:j}
\begin{aligned}
b_1&=-b_4=\tfrac{1}{\sqrt{2}}\frac{\sin(\frac{t-l}{\sqrt{2}})\cos(\frac{t-l}{\sqrt{2}})}
{\cos^2(\frac{t-l}{\sqrt{2}})+\sinh^2(\frac{h}{\sqrt{2}})},\\
b_2&=\tfrac{1}{\sqrt{2}}\frac{\sinh(\frac{h}{\sqrt{2}})\cosh(\frac{h}{\sqrt{2}})}
{\cos^2(\frac{t-l}{\sqrt{2}})+\sinh^2(\frac{h}{\sqrt{2}})},
\end{aligned}	
\end{equation}
where the  function $t$ satisfies $E_3t=1$ and the functions $h, k, l:M\rightarrow R$
satisfy $h\not\equiv 0$ and $E_3h=E_3k=E_3l=0$.

\end{lemma}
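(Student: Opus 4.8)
The starting point is Lemma \ref{lemma:2.3}: since $C=0$ is constant, we already know $AE_3=0$, $b_3=b_5=b_6=0$, and the structure equations \eqref{eqn:2.26}--\eqref{eqn:2.30} reduce dramatically. With $C=0$ the factors $\sqrt{\tfrac{1-C}{1+C}}$ and $\sqrt{\tfrac{1+C}{1-C}}$ both become $1$, and the minimality hypothesis gives $b_4=-b_1$ (because $\operatorname{Tr}A=b_1+b_4+b_6=b_1+b_4$). So the first step is simply to substitute $C=0$ and $b_4=-b_1$ into \eqref{eqn:2.26}--\eqref{eqn:2.28} and record the ODE system in the $E_3$-direction: \eqref{eqn:2.26} becomes $E_3 b_1=\tfrac12+b_1^2-b_2^2$, \eqref{eqn:2.27} becomes $E_3 b_2=2b_1 b_2$, and \eqref{eqn:2.28} is then automatically consistent (it gives the same equation as \eqref{eqn:2.26} after using $b_4=-b_1$). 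Introducing the coordinate $t$ with $E_3 t=1$ — which exists because $E_3$ is a unit field with $\nabla_{E_3}E_3=0$, hence its integral curves are geodesics and $t$ is arclength along them — turns these into a genuine system of ODEs in $t$ along each such geodesic, with $E_1,E_2$ acting as "constants" (i.e.\ the remaining dependence is on transverse coordinates only).

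\textbf{Integration of the ODE system.}
The plan is to solve $\partial_t b_2 = 2 b_1 b_2$ and $\partial_t b_1 = \tfrac12 + b_1^2 - b_2^2$ by cases according to whether $b_2$ vanishes identically or not.
If $b_2\equiv 0$ we are left with the Riccati equation $\partial_t b_1=\tfrac12+b_1^2$, whose solutions are $b_1=\tfrac{1}{\sqrt 2}\tan\!\big(\tfrac{t-k}{\sqrt2}\big)$ for a transverse function $k$ with $E_3 k=0$; this is Case 2. The degenerate sub-possibility $b_1\equiv 0$ as well is impossible here since $\partial_t b_1=\tfrac12\ne 0$, so within $b_2\equiv 0$ only Case 2 survives (Case 1 has $b_2\ne 0$).
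If $b_2\not\equiv 0$, the natural move is to look for a conserved-type quantity: from the two equations one computes $\partial_t(b_1^2+b_2^2)$ and $\partial_t(b_2 e^{-\text{something}})$, or more efficiently, observe that $\partial_t\log b_2 = 2b_1$, so $b_1=\tfrac12\partial_t\log b_2$, and substituting into the $b_1$-equation yields a second-order ODE for $\log b_2$ alone. Setting $h$ via $b_2 = $ (an appropriate expression in $\sinh,\cosh$ of $h/\sqrt2$) is the substitution that linearizes things; I expect that writing $b_2=\tfrac{1}{\sqrt2}\,\dfrac{\sinh(h/\sqrt2)\cosh(h/\sqrt2)}{\cos^2((t-l)/\sqrt2)+\sinh^2(h/\sqrt2)}$ and the matching formula for $b_1$, with $h,l$ transverse ($E_3 h=E_3 l=0$, $h\not\equiv0$), exactly solves the system — this is Case 3. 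Case 1, $b_1=0$, $b_2=\tfrac{1}{\sqrt2}$, arises as the special limit where the solution is $t$-independent: indeed $b_1\equiv0$ forces (from $\partial_t b_1=\tfrac12+b_1^2-b_2^2$) $b_2^2\equiv\tfrac12$, and then $\partial_t b_2=2b_1 b_2=0$ is consistent; so one must list it separately because it is not captured by the generic Case-3 parametrization.

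\textbf{Main obstacle and finishing.}
The routine part is the ODE integration; the subtler point is organizing the case division cleanly and checking that the three cases are exhaustive and non-overlapping on open sets. Concretely, one should argue: on the open set where $b_2\ne0$, the solution is of Case-3 type (with the caveat that Case 1 is the boundary/degenerate profile where $b_1\equiv0$ and $|b_2|\equiv\tfrac1{\sqrt2}$, which is a valid solution of the system but lies "at the edge" of the Case-3 family — so I would either absorb it as a limiting parameter value $h\to\infty$ or, more cleanly, treat $b_1\equiv0$ as its own case first); on the open set where $b_2\equiv0$, we are in Case 2. The one genuinely delicate issue is the interface between the regions $\{b_2=0\}$ and $\{b_2\ne0\}$: a priori $M$ could be partitioned into both. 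Here $\partial_t\log b_2=2b_1$ shows that if $b_2$ vanishes at one point of an $E_3$-integral curve it vanishes along the whole curve (uniqueness for the linear ODE $\partial_t b_2=2b_1 b_2$), so the two regions are unions of integral curves; combined with the transverse Codazzi equations \eqref{eqn:2.29}--\eqref{eqn:2.30} ($E_1 b_2=E_2 b_1$, $E_1 b_4=E_2 b_2$, i.e.\ $E_1 b_2=E_2 b_1$ and $-E_1 b_1=E_2 b_2$), one concludes that on a connected $M$ the three cases cannot mix — each holds on all of $M$. I would close by noting that in every case $b_1=-b_4$ and $b_2$ are as claimed, $b_3=b_5=b_6=0$ by Lemma \ref{lemma:2.3}, and the stated conditions $E_3 t=1$, $E_3 h=E_3 k=E_3 l=0$, $h\not\equiv0$ are exactly what the integration produced, which completes the proof.
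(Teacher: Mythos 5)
Your reduction is the same as the paper's: with $C=0$ (so $b_3=b_5=b_6=0$ by Lemma \ref{lemma:2.3}) and minimality giving $b_4=-b_1$, equations \eqref{eqn:2.26}--\eqref{eqn:2.28} collapse to the system $E_3b_1=\tfrac12+b_1^2-b_2^2$, $E_3b_2=2b_1b_2$ along the integral curves of $E_3$, with \eqref{eqn:2.28} redundant; and your treatment of the degenerate cases ($b_2\equiv0$ giving the Riccati solution of Case 2, and $b_1\equiv0$ forcing $b_2^2=\tfrac12$, i.e.\ Case 1) is fine. The genuine gap is the generic case, which is the heart of the lemma: you neither derive the Case 3 formulas nor show that every solution with $b_2\not\equiv0$ other than the equilibrium is of that form. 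Writing ``I expect that'' the ansatz \eqref{eq:j} ``exactly solves the system'' only asserts (without computation) that \eqref{eq:j} is \emph{a} solution; the lemma needs the converse statement that the two-parameter family \eqref{eq:j}, together with Case 1, exhausts all solutions. Your proposed route via $b_1=\tfrac12E_3\log b_2$ and a second-order equation for $\log b_2$ is plausible but entirely unexecuted, so as written the classification is incomplete exactly where it is nontrivial.

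The paper closes this in one stroke by complexifying: with $F=b_1+\mathbf{i}b_2$ the two real equations combine into the single Riccati equation $E_3F=\tfrac12+F^2$, whose solutions along each integral curve are $F=\tfrac{1}{\sqrt2}\tan\bigl(\tfrac{t+z_0}{\sqrt2}\bigr)$ with $z_0=-l+\mathbf{i}h$ constant along the curve, apart from the constant solutions $F=\pm\tfrac{\mathbf{i}}{\sqrt2}$ (precisely Case 1, the only values the complex tangent omits). Taking real and imaginary parts via $\tan(x+\mathbf{i}y)=\frac{\sin x\cos x+\mathbf{i}\sinh y\cosh y}{\cos^2x+\sinh^2y}$ yields \eqref{eq:j} verbatim, and exhaustiveness is automatic from uniqueness for the initial value problem. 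If you want to keep a real-variable route, you must actually integrate your second-order equation for $\log b_2$ and check that the parametrization $(h,l)$ reaches every admissible initial datum $(b_1(0),b_2(0))$ with $b_2(0)\neq0$ off the equilibrium. A secondary remark: your closing claim that \eqref{eqn:2.29}--\eqref{eqn:2.30} prevent the cases from mixing on a connected $M$ is asserted, not proved; it is also not needed, since the statement (like the paper's proof) is local in nature, your case split (by $b_2\equiv0$ versus not) differing only cosmetically from the paper's split by $b_2$ constant versus non-constant.
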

\begin{proof}
By using $H=\frac{1}{3}(b_1+b_4)=0$ and $C=0$, the equations \eqref{eqn:2.26} and \eqref{eqn:2.27} become
\begin{equation*}
\begin{split}
E_3b_1&=\tfrac{1}{2}+b_1^2-b_2^2,\\
E_3b_2&=2b_1b_2.
\end{split}	
\end{equation*}

When $b_2$ is a constant on $M$, we have $b_1b_2=0$. Furthermore,
if $b_1=0$, then up to a sign of the unit normal $N$, we are in the situation of  {\bf Case 1}.
If $b_2=0$, then $b_1$ satisfies $E_3b_1=\tfrac{1}{2}+b_1^2$. By integrating
this equation along the integral curves of $E_3$, we  obtain {\bf Case 2}.

When $b_2$ is not a constant on $M$, let $F=b_1+\mathbf{i}b_2$, we get
$$
E_3F=E_3(b_1+\mathbf{i}b_2)=\frac{1}{2}+(b_1+\mathbf{i}b_2)^2=\frac{1}{2}+F^2.
$$
This equation can be directly integrated along the integral curves of $E_3$
to obtain $F=\frac{1}{\sqrt{2}}\tan(\frac{t+z_0}{\sqrt{2}})$,
where $z_0=-l+\mathbf{i}h$, $h$ and $l$ are functions on $M$ which satisfy $h\not\equiv 0$ and $E_3h=E_3l=0$.
By $F=b_1+\mathbf{i}b_2$, it follows that $b_1$ and $b_2$ are the real part and the imaginary part of $F$, respectively,
which corresponds to  {\bf Case 3}.
\end{proof}

In the following, we deal with {\bf Case 3} of Lemma \ref{lemma:2.1aaa1}.
Inspired by the techniques in \cite{K-V}, we use the integrability conditions to find appropriate local coordinates.
%The proof follows the similar idea of \cite{K-V} to find a appropriate local coordinates.
We assume that

\begin{equation}\label{UVT}
\left(
  \begin{array}{c}
    U \\
    V \\
    T \\
  \end{array}
\right)
=
\left(
  \begin{array}{ccc}
    a_1 & a_2 & a_1d_1+a_2d_2 \\
    -a_2 & a_1 & -a_2d_1+a_1d_2 \\
    0 & 0 & 1 \\
  \end{array}
\right)
\left(
  \begin{array}{c}
    E_1 \\
    E_2 \\
    E_3 \\
  \end{array}
\right),
\end{equation}
where $a_1,a_2,d_1,d_2$ are smooth functions on $M$.

\begin{lemma}\label{lemma:2.1a1}
 $\{U,V,T\}$ is a local coordinate frame  if and only if
the following system of equations is satisfied:
\begin{equation}\label{eq:2.ab1}
E_3a_1=-a_1b_1-a_2b_2,\ \ E_3a_2=a_1b_2-a_2b_1,
\end{equation}
\begin{equation}\label{eq:2.ab2}
E_1a_2+E_2a_1=0,\ \ E_1a_1-E_2a_2=0,
\end{equation}
\begin{equation}\label{eq:2.ab3}
E_3d_1=b_1d_1-b_2d_2,\ \ E_3d_2=b_1d_2+b_2d_1,
\end{equation}
\begin{equation}\label{eq:2.ab4}
E_1d_2-E_2d_1=2b_2.
\end{equation}

\end{lemma}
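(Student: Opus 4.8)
The plan is to recognize that the claim is a clean application of the Frobenius theorem: the ordered triple $\{U,V,T\}$ comes from a coordinate system $(u,v,t)$ exactly when the three vector fields pairwise commute, i.e. $[U,V]=[U,T]=[V,T]=0$, together with the normalization built into \eqref{UVT} which pins down the gradients of $u,v,t$. Since the change of frame matrix in \eqref{UVT} is invertible (its determinant is $a_1^2+a_2^2$, which cannot vanish on account of \eqref{eq:2.ab1} forcing $a_1,a_2$ not to vanish simultaneously — or one simply assumes $a_1^2+a_2^2\neq 0$ as part of the setup), $\{U,V,T\}$ is always a local frame, so the only issue is whether it is a \emph{coordinate} frame. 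First I would therefore compute the three brackets directly, expressing everything in terms of $E_1,E_2,E_3$ using the connection coefficients of Lemma \ref{lemma:2.3} (with $C=0$), the minimality relations $b_3=b_5=b_6=0$ and $b_1+b_4=0$, and the structure equations \eqref{eqn:2.26}--\eqref{eqn:2.30}.

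The key computation is $[U,T]$ and $[V,T]$. Writing $U=a_1E_1+a_2E_2+(a_1d_1+a_2d_2)E_3$ and $T=E_3$, one has $[U,T]=[U,E_3]$; expanding using $\nabla$ and the torsion-free identity $[X,Y]=\nabla_XY-\nabla_YX$, the $E_3$-derivatives of $a_1,a_2,d_1,d_2$ appear with a minus sign and the connection terms $\nabla_{E_1}E_3$, $\nabla_{E_2}E_3$ from \eqref{eqn:2.25} appear. Collecting coefficients of $E_1$, $E_2$, and $E_3$ separately, the $E_1$ and $E_2$ components vanish precisely when \eqref{eq:2.ab1} and \eqref{eq:2.ab3} hold (these govern $E_3a_1,E_3a_2$ and $E_3d_1,E_3d_2$), and the $E_3$-component then vanishes automatically, or contributes nothing new. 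Similarly $[U,V]$ expands into connection terms involving $\nabla_{E_i}E_j$ for $i,j\in\{1,2\}$ (which by \eqref{eqn:2.25} land in $E_3$) plus $E_1,E_2$-derivatives of the coefficients; after using \eqref{eq:2.ab1} and \eqref{eq:2.ab3} to simplify the $d_i$-dependent part, requiring the $E_1$- and $E_2$-components to vanish yields \eqref{eq:2.ab2}, and requiring the $E_3$-component to vanish yields \eqref{eq:2.ab4} (the term $2b_2$ on the right-hand side is exactly the curvature/second-fundamental-form contribution $\nabla_{E_1}E_2-\nabla_{E_2}E_1$ combined with what survives from the $d_i$ terms). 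One should also check $[V,T]$, but by the symmetric way $U$ and $V$ are built from the rotation matrix $\begin{pmatrix}a_1&a_2\\-a_2&a_1\end{pmatrix}$, its vanishing reduces to the same system \eqref{eq:2.ab1}, \eqref{eq:2.ab3} with no extra conditions.

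For the converse direction — that these four sets of equations \emph{suffice} — once all three brackets vanish, the Frobenius theorem (or simply the standard fact that commuting vector fields with everywhere-independent values form a coordinate frame) gives local coordinates $(u,v,t)$ with $\partial_u=U$, $\partial_v=V$, $\partial_t=T$. Here it is worth noting that $T=E_3$ together with $E_3t=1$ (imposed in Lemma \ref{lemma:2.1aaa1}) is consistent, and the special last column of the matrix in \eqref{UVT} is designed exactly so that $U,V$ are $\partial_u,\partial_v$ rather than merely spanning the same distribution; I would remark on this so the reader sees why $d_1,d_2$ are introduced.

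The main obstacle is purely bookkeeping: carefully expanding $[U,V]$, where $U$ and $V$ each have an $E_3$-component that is itself a combination $a_1d_1+a_2d_2$ (resp. $-a_2d_1+a_1d_2$) of four functions, and keeping track of which derivative terms cancel against which connection terms. The danger is sign errors and mis-bracketing the products $a_id_j$ when the derivation $E_1$ or $E_2$ hits them. I expect that after substituting \eqref{eq:2.ab1} and \eqref{eq:2.ab3} to kill the $E_3a_i, E_3d_i$ that appear, a good deal collapses, and the residual identity in the $E_3$-slot of $[U,V]$ is where $2b_2$ emerges, matching \eqref{eq:2.ab4}; verifying that no further independent equation is hidden in the $E_3$-components of $[U,T]$ and $[V,T]$ is the one place to be vigilant, since a spurious extra condition there would break the ``if and only if''.
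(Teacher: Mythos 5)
Your proposal is correct and follows essentially the same route as the paper: compute the commutators $[T,U]$, $[T,V]$, $[U,V]$ in the frame $\{E_1,E_2,E_3\}$ using the connection of Lemma \ref{lemma:2.3} with $C=0$, match coefficients, and use $a_1^2+a_2^2\neq 0$ to extract \eqref{eq:2.ab2} and \eqref{eq:2.ab4} from $[U,V]=0$. The only slight misstatement is in the bookkeeping: \eqref{eq:2.ab3} comes from the $E_3$-components of $[T,U]$ and $[T,V]$ taken together (their $E_1$-, $E_2$-components give only \eqref{eq:2.ab1}), not "automatically" — but your hedge covers this and it does not affect the argument.
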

\begin{proof}
We will show that our system of equations is equivalent to that all the commutators $[T,U]$, $[T,V]$ and $[U,V]$ vanish. Using $[E_i,E_j]=\nabla_{E_i} {E_j}-\nabla_{E_j} {E_i}$, $C=0$
and \eqref{eqn:2.25}, we get
\begin{equation}\label{liebracket}
[E_1,E_2]=-2b_2E_3,\ [E_1,E_3]=-b_1E_1+b_2E_2,\ [E_2,E_3]=-b_2E_1+b_4E_2=-b_2E_1-b_1E_2.
\end{equation}

Therefore
$$
\begin{aligned}
{\text [T,U]}=&(E_3a_1+a_1b_1+a_2b_2)E_1+(E_3a_2-a_1b_2+a_2b_1)E_2\\
&+(d_1 E_3a_1+a_1E_3d_1+d_2E_3a_2+a_2E_3d_2)E_3.
\end{aligned}
$$
Then $[T,U]=0$ is equivalent to \eqref{eq:2.ab1} and
\begin{equation}\label{eq:2.ab5}
a_1E_3d_1+a_2E_3d_2=(a_1b_1+a_2b_2)d_1+(a_2b_1-a_1b_2)d_2.
\end{equation}

Similarly, we get
$$
\begin{aligned}
{\text [T,V]}=&(-E_3a_2-a_2b_1+a_1b_2)E_1+(E_3a_1+a_2b_2+a_1b_1)E_2\\
&+(-d_1 E_3a_2-a_2E_3d_1+d_2E_3a_1+a_1E_3d_2)E_3.
\end{aligned}
$$
Using \eqref{eq:2.ab1}, we have the following equation
\begin{equation}\label{eq:2.ab6}
-a_2E_3d_1+a_1E_3d_2=(a_1b_2-a_2b_1)d_1+(a_1b_1+a_2b_2)d_2.
\end{equation}

A short calculation implies that \eqref{eq:2.ab5} and \eqref{eq:2.ab6}
are equivalent to \eqref{eq:2.ab3}. Finally, a straightforward calculation gives
$$
\begin{aligned}
{\text [U,V]}=&\Big(-a_1(E_1a_2+E_2a_1)+a_2(E_1a_1-E_2a_2)\Big)E_1\\
&+\Big(a_2(E_1a_2+E_2a_1)+a_1(E_1a_1-E_2a_2)\Big)E_2\\
&+\Big((a_1^2+a_2^2)(E_1d_2-E_2d_1-2b_2)-a_1d_1(E_1a_2+E_2a_1)+a_2d_1(E_1a_1-E_2a_2)\\
&+a_2d_2(E_1a_2+E_2a_1)+a_1d_2(E_1a_1-E_2a_2)\Big)E_3.
\end{aligned}
$$
Since $a_1^2+a_2^2\neq0$, we obtain that $[U,V]$ vanishes
 if and only if both \eqref{eq:2.ab2} and \eqref{eq:2.ab4} hold.
\end{proof}

Any solution to the system of equations \eqref{eq:2.ab1}--\eqref{eq:2.ab4}
 provides us  adapted coordinates. We present a special solution as follows.

\begin{lemma}\label{lemma:2.1a2}
Assume that $b_1,b_2$ and $b_4$ are given by \eqref{eq:j} on $M$,
let $f$ be any function satisfying $E_3f=-1$. Then the functions $a_1,a_2,d_1,d_2$ defined by
\begin{equation}\label{eq:2.ab7}
\begin{aligned}
(a_1+\mathbf{i}a_2)^2=\frac{1}{2(b_1-\mathbf{i}b_2)^2+1}, \ \ d_1=E_1f,\ d_2=E_2f,
\end{aligned}
\end{equation}
are a solution to the system of differential equations \eqref{eq:2.ab1}--\eqref{eq:2.ab4}.
\end{lemma}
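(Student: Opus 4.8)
The claim is that the explicit functions $a_1,a_2,d_1,d_2$ in \eqref{eq:2.ab7} solve the system \eqref{eq:2.ab1}--\eqref{eq:2.ab4}. The natural strategy is to verify each of the four groups of equations directly, using the structure equations \eqref{liebracket} and the differential relations \eqref{eqn:2.26}--\eqref{eqn:2.30} (with $C=0$, $H=0$), together with the closed-form expressions \eqref{eq:j}. I would package $a_1,a_2$ into the complex function $a:=a_1+\mathbf{i}a_2$ and $b_1,b_2$ into $F:=b_1+\mathbf{i}b_2$, since the proof of Lemma \ref{lemma:2.1aaa1} already shows $E_3F=\tfrac12+F^2$ and the first three relations among the $b$'s then read $F=\tfrac{1}{\sqrt2}\tan\!\big(\tfrac{t+z_0}{\sqrt2}\big)$. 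In this notation \eqref{eq:2.ab1} becomes the single complex equation $E_3a=-\bar F\,a$ (note the conjugate, because of the sign pattern in \eqref{eq:2.ab1}), and \eqref{eq:2.ab7} reads $a^2=(2\bar F^2+1)^{-1}$.

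\textbf{Steps.} First, \eqref{eq:2.ab1}: differentiate $a^2(2\bar F^2+1)=1$ along $E_3$. Since $E_3\bar F=\overline{E_3F}=\tfrac12+\bar F^2$, one gets $2a(E_3a)(2\bar F^2+1)+a^2\cdot 4\bar F(\tfrac12+\bar F^2)=0$, hence $E_3a=-\dfrac{a\,\bar F(2\bar F^2+1)}{2\bar F^2+1}=-\bar F a$, which is exactly \eqref{eq:2.ab1}. Second, \eqref{eq:2.ab2}, i.e. $E_1a_2+E_2a_1=0$ and $E_1a_1=E_2a_2$; these say that $\bar\partial$-type combination $(E_1+\mathbf{i}E_2)\bar a=0$, equivalently $(E_1-\mathbf{i}E_2)a=0$. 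Since $a^2$ is an explicit function of $\bar F$ alone, it suffices to show $(E_1-\mathbf{i}E_2)F=0$: this is precisely the content of the Codazzi relations \eqref{eqn:2.29}--\eqref{eqn:2.30}, which with $b_4=-b_1$ give $E_1b_1=E_2b_2$ and $E_1b_2=-E_2b_1$, i.e. $(E_1+\mathbf{i}E_2)F=0$, hence $(E_1-\mathbf{i}E_2)\bar F=0$ and therefore $(E_1-\mathbf{i}E_2)(a^2)=0$, so $(E_1-\mathbf{i}E_2)a=0$. Third, \eqref{eq:2.ab3}, i.e. $E_3(d_1+\mathbf{i}d_2)=F(d_1+\mathbf{i}d_2)$: with $d_1=E_1f$, $d_2=E_2f$ and $E_3f=-1$, compute $E_3(E_1f)=[E_3,E_1]f+E_1(E_3f)=(b_1E_1-b_2E_2)f+0=b_1d_1-b_2d_2$ using \eqref{liebracket}, and similarly $E_3(E_2f)=[E_3,E_2]f=(b_2E_1+b_1E_2)f=b_2d_1+b_1d_2$; this is exactly \eqref{eq:2.ab3}. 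Fourth, \eqref{eq:2.ab4}: $E_1d_2-E_2d_1=E_1E_2f-E_2E_1f=[E_1,E_2]f=-2b_2E_3f=2b_2$, again by \eqref{liebracket} and $E_3f=-1$.

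\textbf{Main obstacle.} There is no deep difficulty; the only points requiring care are bookkeeping ones. One is getting the conjugation pattern right when passing between the real system \eqref{eq:2.ab1}--\eqref{eq:2.ab4} and its complex repackaging — the signs in \eqref{eq:2.ab1} force $\bar F$ rather than $F$, and correspondingly \eqref{eq:2.ab7} is written in $\bar F$. A second is verifying that $2\bar F^2+1$ is nowhere zero on (the relevant open subset of) $M$, so that the square root in \eqref{eq:2.ab7} genuinely defines smooth functions $a_1,a_2$; from \eqref{eq:j} one has $2F^2+1 = \big(\cos^2(\tfrac{t-l}{\sqrt2})+\sinh^2(\tfrac{h}{\sqrt2})\big)^{-2}\big(\cdot\big)$ with a manifestly nonvanishing factor when $h\not\equiv0$, so $a_1^2+a_2^2=|a|^2=|2\bar F^2+1|^{-1}\neq0$, consistent with the nondegeneracy $a_1^2+a_2^2\neq0$ used in Lemma \ref{lemma:2.1a1}. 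Finally, the existence of a function $f$ with $E_3f=-1$ is guaranteed locally since $t$ with $E_3t=1$ exists by Lemma \ref{lemma:2.1aaa1} (take $f=-t$, or any such primitive along the $E_3$-flow); one should note this so that $d_1,d_2$ are well defined. Everything else is the routine differentiation indicated above.
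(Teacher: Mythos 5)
Your overall strategy coincides with the paper's: the paper also treats the two subsystems separately, disposes of \eqref{eq:2.ab3}--\eqref{eq:2.ab4} exactly as you do (apply the brackets \eqref{liebracket} to $f$ with $E_3f=-1$), and obtains the first subsystem through the complex variable $b_1-\mathbf{i}b_2$ and holomorphicity. The only real difference is one of direction: the paper \emph{derives} \eqref{eq:2.ab7} by viewing $a_1,a_2$ as functions of $(b_1,b_2)$, imposing the Cauchy--Riemann equations and solving the ODE $(z^2+\tfrac12)\partial_z(a_1+\mathbf{i}a_2)=-z(a_1+\mathbf{i}a_2)$ in $z=b_1-\mathbf{i}b_2$, whereas you \emph{verify} the closed formula directly by differentiating $a^2(2\bar F^2+1)=1$ along $E_3$ with $E_3\bar F=\tfrac12+\bar F^2$. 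Your verifications of \eqref{eq:2.ab1}, \eqref{eq:2.ab3} and \eqref{eq:2.ab4}, and your remarks on the nonvanishing of $2\bar F^2+1$ and the local existence of $f$ (e.g.\ $f=-t$), are correct.

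The step for \eqref{eq:2.ab2}, however, is written with the conjugations scrambled, and the identities you assert there are false as stated. With $a=a_1+\mathbf{i}a_2$ one has $(E_1+\mathbf{i}E_2)a=(E_1a_1-E_2a_2)+\mathbf{i}(E_1a_2+E_2a_1)$, so \eqref{eq:2.ab2} is equivalent to $(E_1+\mathbf{i}E_2)a=0$, not to $(E_1-\mathbf{i}E_2)a=0$. Likewise, \eqref{eqn:2.29}--\eqref{eqn:2.30} with $b_4=-b_1$ give $E_1b_2=E_2b_1$ and $E_1b_1=-E_2b_2$, i.e.\ $(E_1-\mathbf{i}E_2)F=0$, equivalently $(E_1+\mathbf{i}E_2)\bar F=0$; your claimed relations $E_1b_1=E_2b_2$, $E_1b_2=-E_2b_1$ (hence ``$(E_1+\mathbf{i}E_2)F=0$'') have both signs wrong, and they also contradict the goal $(E_1-\mathbf{i}E_2)F=0$ you announce two lines earlier. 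The two slips happen to be matched, so the argument is repaired by running the correct chain: Codazzi gives $(E_1+\mathbf{i}E_2)\bar F=0$; since $a^2=(2\bar F^2+1)^{-1}$ is a holomorphic function of $\bar F$, the chain rule yields $(E_1+\mathbf{i}E_2)(a^2)=0$, hence $(E_1+\mathbf{i}E_2)a=0$ because $a\neq0$, which is precisely \eqref{eq:2.ab2}. With this bookkeeping fixed, nothing of substance is missing from your proof.
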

\begin{proof}
Observe that the system of equations splits into two independent
subsystems, \eqref{eq:2.ab1}-\eqref{eq:2.ab2} and \eqref{eq:2.ab3}-\eqref{eq:2.ab4}.

Assume that $a_1,a_2$ are functions of $b_1,b_2$. Using the equations
\eqref{eqn:2.26}--\eqref{eqn:2.30} and $C=0$, we express the first subsystem \eqref{eq:2.ab1}-\eqref{eq:2.ab2}  with respect to
 $b_1,b_2$ and  obtain that
\begin{equation}\label{eq:2.ab8}
(b_1^2-b_2^2+\frac{1}{2})\frac{\partial a_1}{\partial b_1}+2b_1b_2\frac{\partial a_1}{\partial b_2}
=-a_1b_1-a_2b_2,
\end{equation}
\begin{equation}\label{eq:2.ab9}
(b_1^2-b_2^2+\frac{1}{2})\frac{\partial a_2}{\partial b_1}+2b_1b_2\frac{\partial a_2}{\partial b_2}
=a_1b_2-a_2b_1,
\end{equation}
\begin{equation}\label{eq:2.ab10}
\frac{\partial a_2}{\partial b_1}E_1b_1+\frac{\partial a_2}{\partial b_2}E_1b_2
+\frac{\partial a_1}{\partial b_1}E_2b_1+\frac{\partial a_1}{\partial b_2}E_2b_2=0,
\end{equation}
\begin{equation}\label{eq:2.ab11}
\frac{\partial a_1}{\partial b_1}E_1b_1+\frac{\partial a_1}{\partial b_2}E_1b_2
-\frac{\partial a_2}{\partial b_1}E_2b_1-\frac{\partial a_2}{\partial b_2}E_2b_2=0.
\end{equation}
Equations $\eqref{eq:2.ab10}-\eqref{eq:2.ab11}$ are satisfied if $(a_1,a_2)$ satisfy the Cauchy-Riemann equations
with respect to $b_1,b_2$, i.e.,
$$
\frac{\partial a_2}{\partial b_1}=\frac{\partial a_1}{\partial b_2},\ \
\frac{\partial a_2}{\partial b_2}=-\frac{\partial a_1}{\partial b_1}.
$$
We assume that $a_1(z)+\mathbf{i}a_2(z)$ is a holomorphic function with respect
to $z=b_1-\mathbf{i}b_2$, then
$$
(z^2+\tfrac{1}{2})\frac{\partial}{\partial z}(a_1+\mathbf{i}a_2)=
(b_1^2-b_2^2+\tfrac{1}{2})\frac{\partial a_1}{\partial b_1}+2b_1b_2\frac{\partial a_1}{\partial b_2}
+\mathbf{i}\Big((b_1^2-b_2^2+\tfrac{1}{2})\frac{\partial a_2}{\partial b_1}+2b_1b_2\frac{\partial a_2}{\partial b_2}\Big),
$$
and
$$
-z(a_1+\mathbf{i}a_2)=(-a_1b_1-a_2b_2)+\mathbf{i}(-b_1a_2+b_2a_1).
$$
Then \eqref{eq:2.ab8} and \eqref{eq:2.ab9} are equivalent to
$$
(z^2+\tfrac{1}{2})\frac{\partial}{\partial z}(a_1+\mathbf{i}a_2)=-z(a_1+\mathbf{i}a_2).
$$
Thus, we obtain a special solution to the equation system  $\eqref{eq:2.ab8}-\eqref{eq:2.ab11}$: $(a_1+\mathbf{i}a_2)^2=\frac{1}{2(b_1-\mathbf{i}b_2)^2+1}$.

Let $f$ be any function satisfying that $E_3f=-1$. We denote $d_1=E_1f, d_2=E_2f$, by using \eqref{liebracket}, we obtain the following integrability conditions for $f$.
$$
\begin{aligned}
	&0=[E_1,E_3]f+b_1E_1(f)-b_2E_2(f)=-E_3d_1+b_1d_1-b_2d_2,\\
	&0=[E_2,E_3]f+b_2E_1(f)+b_1E_2(f)=-E_3d_2+b_2d_1+b_1d_2,\\
	&0=[E_1,E_2]f+2b_2E_3(f)=E_1d_2-E_2d_1-2b_2,
\end{aligned}
$$
which immediately implies that  $d_1=E_1f, d_2=E_2f$ is a solution to the
second subsystem \eqref{eq:2.ab3}-\eqref{eq:2.ab4}.
\end{proof}

We denote the coordinates obtained from the solution given in Lemma \ref{lemma:2.1a2} by $(u,v,t)$, i.e.,
$\partial_u=U,\ \partial_v=V,\ \partial_t=T=E_3$.
We will further make a special choice for $f$ to simplify \eqref{eq:j}.

\begin{lemma}\label{lemma:2.1a3}
There exists a function $f$ with $E_3f=-1$ and a constant $c$ such that
replacing $t$ by $t-c$ one has $l=0$. Moreover, $f$ is given by
$$
f=-\tfrac{1}{\sqrt{2}}\arcsin\Big(\tfrac{\sqrt{2}b_1}
{\sqrt{(b_1^2-b_2^2+\frac{1}{2})^2+4b_1^2b_2^2}}\Big).
$$
The variable $t$ is given by $t=-f+$const.

\end{lemma}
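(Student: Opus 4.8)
The plan is to verify directly that the explicit function $f$ displayed in the statement is an admissible choice in Lemma \ref{lemma:2.1a2} (that is, $E_3f=-1$), that it equals $-(t-l)$ up to an additive constant, and that consequently the coordinate $t=-f+\mathrm{const}$ turns the parameter $l$ into a constant, which is then removed by a shift. Everything reduces to one algebraic identity; once that is in hand, the rest is formal.

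\emph{Step 1 (the key identity).} From the proof of Lemma \ref{lemma:2.1aaa1}, in Case 3 we have $b_1+\mathbf{i}b_2=\tfrac1{\sqrt2}\tan w$ with $w=\tfrac12(\xi+\mathbf{i}\eta)$, where I abbreviate $\xi:=\sqrt2(t-l)$, $\eta:=\sqrt2h$. Using $1+\tan^2w=1/\cos^2w$ and $|\cos w|^2=\cos^2\tfrac{\xi}{2}+\sinh^2\tfrac{\eta}{2}=\tfrac12(\cos\xi+\cosh\eta)$, one computes
\[
\big(b_1^2-b_2^2+\tfrac12\big)^2+4b_1^2b_2^2=\tfrac14\big|1+2(b_1+\mathbf{i}b_2)^2\big|^2=\tfrac14\Big|\tfrac1{\cos^2w}\Big|^2=\frac{1}{(\cos\xi+\cosh\eta)^2},
\]
and likewise $\sqrt2\,b_1=\re(\tan w)=\sin\xi/(\cos\xi+\cosh\eta)$; equivalently one rewrites \eqref{eq:j} by the double-angle formulas, the one non-obvious point being $(1+\cos\xi\cosh\eta)^2+\sin^2\xi\sinh^2\eta=(\cos\xi+\cosh\eta)^2$. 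Since $\cos\xi+\cosh\eta>0$ wherever defined and nonzero, these combine to
\[
\frac{\sqrt2\,b_1}{\sqrt{(b_1^2-b_2^2+\frac12)^2+4b_1^2b_2^2}}=\sin\xi=\sin\!\big(\sqrt2(t-l)\big).
\]

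\emph{Step 2 ($E_3f=-1$ and $f=-(t-l)+\mathrm{const}$).} Write $P$ for the quantity under the radical. Recall (again from the proof of Lemma \ref{lemma:2.1aaa1}) that $E_3b_1=\tfrac12+b_1^2-b_2^2$ and $E_3b_2=2b_1b_2$; a short computation yields the clean relations $E_3P=4b_1P$ and $P-2b_1^2=(b_1^2+b_2^2-\tfrac12)^2$. Differentiating $f=-\tfrac1{\sqrt2}\arcsin\!\big(\sqrt2\,b_1/\sqrt P\big)$ along $E_3$ and substituting, everything cancels and one gets
\[
E_3f=-\,\frac{\tfrac12-b_1^2-b_2^2}{\bigl|\,b_1^2+b_2^2-\tfrac12\,\bigr|}=-1
\]
on the region $b_1^2+b_2^2<\tfrac12$, which by Step 1 is exactly the region $\cos(\sqrt2(t-l))>0$; as the whole discussion is local we simply work there (on the complement one takes the reflected branch of $\arcsin$, or $-f$). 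Thus $f$ is admissible in Lemma \ref{lemma:2.1a2}, and by Step 1 together with $\arcsin(\sin\xi)=\xi$ (valid on a connected patch with the right branch) we obtain $f=-(t-l)+\mathrm{const}$.

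\emph{Step 3 (absorbing $l$).} With this $f$, the coordinates produced by Lemma \ref{lemma:2.1a2} satisfy $Uf=Vf=0$ (because $d_1=E_1f$, $d_2=E_2f$ and $E_3f=-1$) and $E_3(-f)=1$, so the third coordinate is $t=-f+\mathrm{const}=(t_{\mathrm{old}}-l)+c$ for a constant $c$; substituting $t_{\mathrm{old}}-l=t-c$ into \eqref{eq:j} shows that in the new coordinate the functions $b_1,b_2,b_4$ have the form \eqref{eq:j} with $l$ replaced by the constant $c$. Replacing $t$ by $t-c$ then gives $l=0$, as claimed. The only genuine obstacle is Step 1 — recognizing the identity that collapses the radical to $1/(\cos\xi+\cosh\eta)$; once that is seen the argument is mechanical, the remaining nuisance being the $\arcsin$-branch bookkeeping, which it is cleanest to sidestep by computing $E_3f$ directly from the Codazzi relations via $E_3P=4b_1P$ as in Step 2.
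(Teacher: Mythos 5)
Your proposal is correct and takes essentially the same route as the paper's proof: verify $E_3f=-1$, observe that the definition of $f$ combined with \eqref{eq:j} yields $\sin(-\sqrt{2}f)=\sin(\sqrt{2}(t-l))$, and use the coordinates from Lemma \ref{lemma:2.1a2} (via $\partial_u f=\partial_v f=0$, i.e.\ $Uf=Vf=0$) to conclude that $l$ can be absorbed into a constant shift of $t$. The only difference is that you spell out the paper's ``short calculation'' for $E_3f=-1$ explicitly (through $E_3P=4b_1P$ and $P-2b_1^2=(b_1^2+b_2^2-\tfrac12)^2$) and you track the branch restriction $b_1^2+b_2^2<\tfrac12$, i.e.\ $\cos(\sqrt{2}(t-l))>0$, which the paper leaves implicit; this is a point in your favor, not a deviation in method.
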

\begin{proof}
A short calculation implies that our choice of $f$ satisfies $E_3f=-1$.
Using the defining equation \eqref{UVT} for $T,U,V$,  we get
\begin{equation}\label{eq:2.ab12}
E_1=\frac{a_1}{a_1^2+a_2^2}\partial_u-\frac{a_2}{a_1^2+a_2^2}\partial_v-d_1\partial_t,
\end{equation}
\begin{equation}\label{eq:2.ab13}
E_2=\frac{a_2}{a_1^2+a_2^2}\partial_u+\frac{a_1}{a_1^2+a_2^2}\partial_v-d_2\partial_t,
\end{equation}
\begin{equation}\label{eq:2.ab14}
E_3=\partial_t.
\end{equation}
From $E_1f=d_1, E_2f=d_2, E_3f=-1$ we have
$$
\frac{\partial}{\partial u}f=\frac{\partial}{\partial v}f=0.
$$
We insert \eqref{eq:j} into the definition of $f$ and obtain $\sin(-\sqrt{2}f)=\sin(\sqrt{2}(t-l))$, hence $\frac{\partial}{\partial u}l=\frac{\partial}{\partial v}l=0$.
Recall that $E_3l=0$ (see Lemma \ref{lemma:2.1aaa1}), we obtain that $l$ must be a constant.
\end{proof}

\begin{theorem}\label{thm:3.1}
Let $M$ be a minimal hypersurface of $\mathbb{S}^2\times \mathbb{S}^2$ with product angle function $C=0$.
Assume that with respect to frame field $\{E_1,E_2,E_3\}$ (see \eqref{eqn:2.5}), the function $b_2$ defined in \eqref{eqn:2.7} is not constant on $M$.
Then $M$ admits local coordinates $(u,v,t)$ and there is a non-zero function $h:(u,v)\mapsto \mathbb{R}$
such that  the
functions $b_1,b_2,b_4$ defined in \eqref{eqn:2.7}  are given by
\begin{equation}\label{eq:2.ab15}
b_1=-b_4=\tfrac{1}{\sqrt{2}}\frac{\sin(\frac{t}{\sqrt{2}})\cos(\frac{t}{\sqrt{2}})}
{\cos^2(\frac{t}{\sqrt{2}})+\sinh^2(\frac{h}{\sqrt{2}})},\ \
b_2=\tfrac{1}{\sqrt{2}}\frac{\sinh(\frac{h}{\sqrt{2}})\cosh(\frac{h}{\sqrt{2}})}
{\cos^2(\frac{t}{\sqrt{2}})+\sinh^2(\frac{h}{\sqrt{2}})},
\end{equation}
where $h$ satisfies the following ``sinh-Gordon equation'':
\begin{equation}\label{eq:2.ab16}
(\frac{\partial^2}{\partial u^2}+\frac{\partial^2}{\partial v^2})h
=-\tfrac{1}{\sqrt{2}}\sinh(\sqrt{2}h).
\end{equation}

Conversely, let $\mathcal{D}$ be an open subset in $\mathbb{R}^2$ and  $h:\mathcal{D}\to\mathbb{R}, (u,v)\mapsto h(u,v)\in\mathbb{R}$
be a non-zero function which satisfies the differential equation \eqref{eq:2.ab16}.
Let $\Omega_0=\{(u,v,t)\in \mathcal{D}\times \mathbb{R}|h(u,v)=0,\sqrt{2}t=(2p+1)\pi, p\in\mathbb{Z}\}$, $\Omega=\{(u,v,t)\in \mathcal{D}\times \mathbb{R}\}-\Omega_0\subseteq \mathbb{R}^3$, we define a metric $g$ on $\Omega$ by
\begin{equation}\label{eq:2.ab17}
\begin{aligned}
g&=\Big(\tfrac{1}{2}(\cos(\sqrt{2}t)+\cosh(\sqrt{2}h))+h_v^2\Big)du^2
+\Big(\tfrac{1}{2}(\cos(\sqrt{2}t)+\cosh(\sqrt{2}h))+h_u^2\Big)dv^2+dt^2\\
&-2h_uh_vdudv+2h_vdudt-2h_udvdt,
\end{aligned}
\end{equation}
and a $(1,1)$-tensor field $A$ on $T\Omega$ by
\begin{equation}\label{eqn:A}
\begin{aligned}
A\partial_u&=\tfrac{\cosh(\sqrt{2}h)\sin(\sqrt{2}t)}{\sqrt{2}(\cos(\sqrt{2}t)+\cosh(\sqrt{2}h))}
\partial_u
+\tfrac{\cos(\sqrt{2}t)\sinh(\sqrt{2}h)}{\sqrt{2}(\cos(\sqrt{2}t)+\cosh(\sqrt{2}h))}\partial_v\\
&\ \ -\tfrac{\cosh(\sqrt{2}h)\sin(\sqrt{2}t)h_v-\cos(\sqrt{2}t)\sinh(\sqrt{2}h)h_u}
{\sqrt{2}(\cos(\sqrt{2}t)+\cosh(\sqrt{2}h))}\partial_t,\\
A\partial_v&=\tfrac{\cos(\sqrt{2}t)\sinh(\sqrt{2}h)}{\sqrt{2}(\cos(\sqrt{2}t)+\cosh(\sqrt{2}h))}
\partial_u
-\tfrac{\cosh(\sqrt{2}h)\sin(\sqrt{2}t)}{\sqrt{2}(\cos(\sqrt{2}t)+\cosh(\sqrt{2}h))}\partial_v\\
&\ \ -\tfrac{\sinh(\sqrt{2}h)\cos(\sqrt{2}t)h_v+\sin(\sqrt{2}t)\cosh(\sqrt{2}h)h_u}
{\sqrt{2}(\cos(\sqrt{2}t)+\cosh(\sqrt{2}h))}\partial_t,\\
A\partial_t&=0.
\end{aligned}
\end{equation}
Let $\nu$ be a vecor bundle over  $\Omega$ of rank $1$ with metric $\tilde{g}$, $\nabla ^{\perp} $ a connection on $\nu$ compatible with the metric $\tilde{g}$, and $N$ a unit vector field in $\nu$.   We define a $(1,1)$-tensor field $\tilde{P}$ on $T\Omega\otimes \nu$ by
%\begin{equation}\label{defP}
%	\begin{aligned}
%		P\partial_u&=\frac{a_1^2-a_2^2}{a_1^2+a_2^2}
%		\partial_u
%		-\frac{2a_1 a_2}{a_1^2+a_2^2}\partial_v+(-a_1 d_1+a_2 d_2)\partial_t+(a_1 d_1+a_2 d_2)N\\
%	P\partial_v&=	-\frac{2a_1 a_2}{a_1^2+a_2^2}
%		\partial_u+
%		\frac{a_2^2-a_1^2}{a_1^2+a_2^2}\partial_v +(a_2 d_1+a_1 d_2)\partial_t+(-a_2 d_1+a_1 d_2)N,\\
%	P\partial_t&=N,\\
%		PN&=\partial_t,\\	
%	\end{aligned}
%\end{equation}
\begin{equation}\label{defP}
\begin{aligned}
	\tilde{P}\partial_u&=\tfrac{1+\cos(\sqrt{2}t)\cosh(\sqrt{2}h)}{\cos(\sqrt{2}t)+\cosh(\sqrt{2}h)}
	\partial_u
	-\tfrac{\sin(\sqrt{2}t)\sinh(\sqrt{2}h)}{\cos(\sqrt{2}t)+\cosh(\sqrt{2}h)}\partial_v\\
	&\ \ -
\tfrac{h_v+h_v\cos(\sqrt{2}t)\cosh(\sqrt{2}h)+h_u\sin(\sqrt{2}t)\sinh(\sqrt{2}h)}{{\cos(\sqrt{2}t)+\cosh(\sqrt{2}h)}}\partial_t+h_vN\\
	\tilde{P}\partial_v&=-\tfrac{\sin(\sqrt{2}t)\sinh(\sqrt{2}h)}{\cos(\sqrt{2}t)+\cosh(\sqrt{2}h)}
	\partial_u-
	\tfrac{1+\cos(\sqrt{2}t)\cosh(\sqrt{2}h)}{\cos(\sqrt{2}t)+\cosh(\sqrt{2}h)}\partial_v\\
	&\ \  -\tfrac{h_u+h_u\cos(\sqrt{2}t)\cosh(\sqrt{2}h)-h_v\sin(\sqrt{2}t)\sinh(\sqrt{2}h)}{{\cos(\sqrt{2}t)+\cosh(\sqrt{2}h)}}\partial_t-h_uN,\\
	\tilde{P}\partial_t&=N,\\
	\tilde{P}N&=\partial_t.\\	
\end{aligned}
\end{equation}
Then the integrability conditions are satisfied on $\Omega$ and hence on any simply
connected subset $\Omega_1$ of $\Omega$, up to an isometry of  $\mathbb{S}^2\times \mathbb{S}^2$, there is a unique isometric immersion $\Psi$ from $(\Omega_1,~g)$ into $\mathbb{S}^2\times \mathbb{S}^2$ such that $A$ is the shape operator of  $\Omega_1$ into $\mathbb{S}^2\times \mathbb{S}^2$, $\nu$ is isometric to the normal bundle of $\Psi(\Omega_1)$ in
$\mathbb{S}^2\times \mathbb{S}^2$ by an isomorphism $\tilde{\Psi}:\nu\to T^{\perp}\Psi(\Omega_1)$,
and for any $Y\in T\Omega_1$, we have
\begin{equation}
	P(\Psi_{*}Y)=\Psi_{*}((\tilde{P}Y)^{T})+\tilde{\Psi}((\tilde{P}Y)^{\perp}),~
	P(\tilde{\Psi}N)=\Psi_*(\partial_t),
\end{equation}
where $P$ is the product structure of $\mathbb{S}^2\times \mathbb{S}^2$, $(\tilde{P}Y)^{T}$ and $(\tilde{P}Y)^{\perp}$ denote the projections of  $\tilde{P}Y$ onto $T\Omega_1$ and $\nu$, respectively.
Moreover,  $\Psi$ is minimal with the product angle function $C=0$.
\end{theorem}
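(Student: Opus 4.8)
The proof splits naturally into the two implications in the statement. For the direct part, I would start from Lemma \ref{lemma:2.1aaa1}, \textbf{Case 3}, and Lemmas \ref{lemma:2.1a1}--\ref{lemma:2.1a3}, which already produce adapted coordinates $(u,v,t)$ with $\partial_t=E_3$ and reduce the functions $b_1,b_2,b_4$ to the normalized form in \eqref{eq:j} with $l=0$, i.e.\ precisely \eqref{eq:2.ab15}. What remains is to show that $h$ depends only on $(u,v)$ and satisfies the sinh-Gordon equation \eqref{eq:2.ab16}. The fact $E_3h=0$ is already in Lemma \ref{lemma:2.1aaa1}; that $h_u$ and $h_v$ are not further constrained away, i.e.\ that $h$ is genuinely a function of $(u,v)$, follows from $\partial_t h = (E_3)h = 0$. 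To get \eqref{eq:2.ab16}, I would express the remaining Codazzi equations \eqref{eqn:2.29}--\eqref{eqn:2.30} (which are $E_1b_2-E_2b_1=0$ and $E_1b_4-E_2b_2=0$), together with the commutator relations in \eqref{liebracket}, in the $(u,v,t)$ coordinates, using the explicit formulas \eqref{eq:2.ab12}--\eqref{eq:2.ab14} for $E_1,E_2,E_3$ and the explicit $a_1,a_2$ from \eqref{eq:2.ab7}. Substituting \eqref{eq:2.ab15}, the $t$-dependence should drop out and the system should collapse to a single second-order PDE for $h(u,v)$, namely \eqref{eq:2.ab16}; this is the main computational step and also the place where the choice $(a_1+\mathbf{i}a_2)^2=\tfrac{1}{2(b_1-\mathbf{i}b_2)^2+1}$ is crucial to make the cross-terms cancel.

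For the converse, the strategy is the standard ``fundamental theorem of hypersurfaces'' for the ambient space $\mathbb{S}^2\times\mathbb{S}^2$, in the form given by the existence/uniqueness results of \cite{K,L-T-V} cited in Remark \ref{rem:1.1}: given a Riemannian manifold $(\Omega_1,g)$, a line bundle $\nu$ with metric and compatible connection, a symmetric $(1,1)$-tensor $A$ (the prospective shape operator), and a $(1,1)$-tensor $\tilde P$ on $T\Omega_1\oplus\nu$ (the prospective restriction of the ambient product structure $P$), one obtains a unique isometric immersion into $\mathbb{S}^2\times\mathbb{S}^2$ realizing this data provided the Gauss, Codazzi, and the structural equations relating $P$ to $\bar\nabla$ and to the $J_i$ are satisfied. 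So the task is purely to \emph{verify the integrability conditions} for the data \eqref{eq:2.ab17}, \eqref{eqn:A}, \eqref{defP}: (a) $A$ is $g$-self-adjoint; (b) $\tilde P$ is $\tilde g$-self-adjoint, $\tilde P^2=\mathrm{Id}$, and is $\nabla$-parallel in the appropriate combined connection (Gauss--Weingarten for the $T\Omega_1$ part, $\nabla^\perp$ for the $\nu$ part); (c) the Gauss equation \eqref{eqn:2.1} with $T=\tilde P|_{T\Omega_1}$-minus-normal-part holds for the curvature of $g$; (d) the Codazzi equation \eqref{eqn:2.2} holds; and, since the prescribed $\tilde P$ must come from $P=-J_1J_2$, one also needs that the resulting almost complex structures built from $\tilde P$ and $N$ are integrable and Kähler, which is automatic once $\bar\nabla P=0$ and $P^2=\mathrm{Id}$ hold on the ambient side and is encoded in the cited uniqueness theorem.

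The cleanest way to carry out (a)--(d) is \emph{not} to plug the rational-trigonometric expressions directly into the curvature of $g$, but to reverse the computation of the direct part: define the orthonormal frame $\{E_1,E_2,E_3\}$ on $\Omega$ by inverting \eqref{UVT} (using the same $a_1,a_2,d_1,d_2$), check that $g$ from \eqref{eq:2.ab17} makes this frame orthonormal, check that $A$ from \eqref{eqn:A} acts on it by \eqref{eqn:2.25} with $b_1,b_2,b_4$ as in \eqref{eq:2.ab15}, and check that $\tilde P$ from \eqref{defP} acts by \eqref{eqn:2.6}. Then conditions (a),(b) become the algebraic identities in \eqref{eqn:2.6} (with $C=0$), and (c),(d) become exactly the equations \eqref{eqn:2.26}--\eqref{eqn:2.30} with $H=0$, $C=0$: equations \eqref{eqn:2.26}--\eqref{eqn:2.28} are verified by differentiating \eqref{eq:2.ab15} in $t$, and \eqref{eqn:2.29}--\eqref{eqn:2.30} are verified by using $E_1,E_2$ in coordinate form \eqref{eq:2.ab12}--\eqref{eq:2.ab13} together with the sinh-Gordon equation \eqref{eq:2.ab16} for $h$ — this is the only place the PDE \eqref{eq:2.ab16} is used, and it is what makes the Codazzi equations close. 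Finally, once the immersion $\Psi$ exists, minimality is immediate since $\operatorname{Tr}A=b_1+b_4=0$ by \eqref{eq:2.ab15}, and $C=0$ holds by construction because $g(\tilde PN,N)=0$ from \eqref{defP}. The exclusion of the locus $\Omega_0$ is exactly the set where $\cos(\sqrt2 t)+\cosh(\sqrt2 h)=0$, i.e.\ where the formulas \eqref{eqn:A}, \eqref{defP} degenerate, so $g,A,\tilde P$ are smooth on $\Omega$. I expect step (d), the verification of \eqref{eqn:2.29}--\eqref{eqn:2.30} from \eqref{eq:2.ab16}, to be the main obstacle, together with the bookkeeping needed to match \eqref{defP} against \eqref{eqn:2.6} since $\tilde P$ mixes tangent and normal directions.
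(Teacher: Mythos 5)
Your proposal is correct and follows essentially the same route as the paper: the direct part invokes Lemmas \ref{lemma:2.1aaa1}--\ref{lemma:2.1a3} and then extracts the sinh-Gordon equation from the Codazzi equations \eqref{eqn:2.29}--\eqref{eqn:2.30} together with the bracket compatibility conditions in the $(u,v,t)$ coordinates (the paper uses \eqref{eqn:2.29}--\eqref{eqn:2.30} to determine $d_1,d_2$ and then only the $[E_1,E_2]$ compatibility yields \eqref{eq:2.ab16}, exactly as you anticipate), while the converse is the same direct verification of the integrability conditions followed by the existence and uniqueness theorems of \cite{K,L-T-V}. Your suggestion to organize the converse by reconstructing the orthonormal frame from \eqref{UVT} so that the conditions reduce to \eqref{eqn:2.6} and \eqref{eqn:2.26}--\eqref{eqn:2.30} is a reasonable way to carry out the check the paper leaves implicit.
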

\begin{proof}
Let $M$ be a minimal hypersurface of $\mathbb{S}^2\times \mathbb{S}^2$ with $C=0$.
By Lemmas \ref{lemma:2.1aaa1}-\ref{lemma:2.1a3},  $M$ admits local coordinates $(u,v,t)$ and  %without loss of generality
 there is a function $h:(u,v)\mapsto \mathbb{R}$
such that $b_1,b_2,b_4$ are given by \eqref{eq:2.ab15}.
From Lemma \ref{lemma:2.1a2}, we  obtain that
\begin{equation}\label{eq:2.ab18}
a_1=\cos(\tfrac{t}{\sqrt{2}})\cosh(\tfrac{h}{\sqrt{2}}),\
a_2=\sin(\tfrac{t}{\sqrt{2}})\sinh(\tfrac{h}{\sqrt{2}}).
\end{equation}
Using \eqref{eq:2.ab12},\eqref{eq:2.ab13} and \eqref{eq:2.ab15},
we get that  \eqref{eqn:2.29} and \eqref{eqn:2.30} are equivalent to
$$
d_1=\tfrac{2\big(\cos(\frac{t}{\sqrt{2}})\cosh(\frac{h}{\sqrt{2}})h_v
+\sin(\frac{t}{\sqrt{2}})\sinh(\frac{h}{\sqrt{2}})h_u\big)}{\cos(\sqrt{2}t)+\cosh(\sqrt{2}h)},\ \
d_2=\tfrac{2\big(\sin(\frac{t}{\sqrt{2}})\sinh(\frac{h}{\sqrt{2}})h_v
-\cos(\frac{t}{\sqrt{2}})\cosh(\frac{h}{\sqrt{2}})h_u\big)}{\cos(\sqrt{2}t)+\cosh(\sqrt{2}h)}.
$$
Then we can check the compatibility conditions of  $b_1,b_2,b_4$ with respect to the Lie brackets $[E_i,E_j]$, $1\leq i<j\leq3$,
and obtain that
$$
(\frac{\partial^2}{\partial u^2}+\frac{\partial^2}{\partial v^2})h
=-\tfrac{1}{\sqrt{2}}\sinh(\sqrt{2}h).
$$
In fact, the  compatibility conditions of  $b_1,b_2,b_4$  with respect to the Lie brackets $[E_1,E_3]$ and $[E_2,E_3]$ are automatically satisfied and the  compatibility conditions of  $b_1,b_2,b_4$  with respect to the Lie bracket $[E_1,E_2]$  imply the above differential equation.

Conversely, assume that the function $h$ satisfies \eqref{eq:2.ab16}, the metric $g$ is defined according
to \eqref{eq:2.ab17}, the $(1,1)$-tensor $A$ is defined by \eqref{eqn:A}, and we have a $(1,1)$-tensor $\tilde{P}$ defined by \eqref{defP},
then it can be  checked  directly that all the integrability conditions are satisfied.
Therefore, the conclusions can be obtained by applying  the existence and uniqueness theorems in  \cite{K,L-T-V} directly.
\end{proof}

\begin{remark}\label{rem:3.1aa}
(i) According to Corollary 1 (3)
of \cite{Ur}, $M_0$ is the only hypersurface of $\mathbb{S}^2\times\mathbb{S}^2$
which is minimal and has the scalar curvature and the function $C$ constants. Under the frame $\{E_1,E_2,E_3\}$
	defined in \eqref{eqn:2.5}, $b_2=\frac{1}{\sqrt{2}}$
	on $M_0$.
Hence, if $M$ is a minimal hypersurface described in Theorem \ref{thm:3.1},
then $M$ can not have constant scalar curvature.
(ii) {\bf Case 2} of Lemma \ref{lemma:2.1aaa1}
corresponds to a trivial solution of ``sinh-Gordon equation'': $h\equiv0$,
and the coresponding example is the hypersurface
 $M_{a,b}$, see Theorem  \ref{thm:3.2aa}
 for more details.
\end{remark}

Before dealing with {\bf Case 1} and {\bf Case 2} of Lemma \ref{lemma:2.1aaa1},
we present the following  classification
result for  hypersurfaces of $\mathbb{S}^2\times\mathbb{S}^2$
with constant $C\neq\pm 1$ and constant $b_2$.
 Meanwhile, for any given constant  $C\neq\pm1$, we construct a large family of new examples of hypersurfaces in $\mathbb{S}^2\times\mathbb{S}^2$
 with constant product angle function $C$.

\begin{proposition}\label{prop:3.61}
Let $M$ be a hypersurface with constant $C\neq\pm1$ in $\mathbb{S}^2\times\mathbb{S}^2$.
Then, under the frame $\{E_1,E_2,E_3\}$ defined in \eqref{eqn:2.5}, $b_2$ is constant on $M$ if and only if either
\begin{enumerate}
\item[(1)]
$b_2\neq 0$ and $M$ is an open part of $M_t$ for some $t\in(-1,1)$, or

\item[(2)]
$b_2=0$ and $M$ is locally given by the  immersion
$\Phi: \Omega\subset\mathbb{R}^{3}\longrightarrow \mathbb{S}^2\times\mathbb{S}^2$:
\ $(t,r,s)\longrightarrow(p,q),$
\begin{equation}\label{eqn:6.1}
\begin{aligned}
&p(t,r)=\cos(\sqrt{\tfrac{1-C}{2}}t)\gamma(r)+\sin(\sqrt{\tfrac{1-C}{2}}t)N(r),\\
&q(t,s)=\cos(\sqrt{\tfrac{1+C}{2}}t)\tilde{\gamma}(s)+\sin(\sqrt{\tfrac{1+C}{2}}t)\tilde{N}(s),
\end{aligned}	
\end{equation}
\end{enumerate}
where $\gamma(r)$ and $\tilde{\gamma}(s)$ are  two arbitrary smooth regular curves in $\mathbb{S}^2$ with
 arc length parameters $r$ and $s$,
and $N(r)$ and $\tilde{N}(s)$ are the unit normal vector fields of $\gamma(r)$ and $\tilde{\gamma}(s)$
in $\mathbb{S}^2$, respectively.
\end{proposition}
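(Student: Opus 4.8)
The plan is to prove Proposition~\ref{prop:3.61} by working entirely within the structure equations of Lemma~\ref{lemma:2.3}, using the hypothesis $E_3 b_2 = 0$ to split into the two cases, and then integrating the resulting ODE/PDE system explicitly. Throughout, $C$ is a fixed constant with $|C|<1$, so the frame $\{E_1,E_2,E_3\}$ of \eqref{eqn:2.5} and all of \eqref{eqn:2.25}--\eqref{eqn:2.30} are at our disposal.

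\textbf{Step 1: the dichotomy.} Since $b_2$ is assumed constant, \eqref{eqn:2.27} gives $0 = E_3 b_2 = b_2\big(b_1\sqrt{\tfrac{1-C}{1+C}} - b_4\sqrt{\tfrac{1+C}{1-C}}\big)$, so either $b_2 \neq 0$ and $b_1\sqrt{\tfrac{1-C}{1+C}} = b_4\sqrt{\tfrac{1+C}{1-C}}$ everywhere, or $b_2 \equiv 0$. I would treat these separately.

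\textbf{Step 2: case $b_2 \neq 0$.} Here one has the pointwise relation $(1-C)b_1 = (1+C)b_4$. I would differentiate this along $E_3$ and substitute \eqref{eqn:2.26} and \eqref{eqn:2.28}; after using the relation again to eliminate $b_4$, this should force $b_1$ (hence $b_4$) to be a specific constant determined by $C$ and $b_2$, and pin down $b_2$ as well — in fact the shape operator becomes that of the isoparametric family, with three constant principal curvatures. The cleanest route is: the relation plus \eqref{eqn:2.26}, \eqref{eqn:2.28} yields an algebraic (not differential) identity whose only solution makes $A$ have the eigenstructure of $M_t$; then one invokes Urbano's classification of hypersurfaces with constant principal curvatures (or constructs the explicit position vector as in part (2) and recognizes it) to conclude $M$ is an open part of some $M_t$. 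Identifying \emph{which} $t$ from $C$ and $b_2$ is a short computation via the known second fundamental form of $M_t$.

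\textbf{Step 3: case $b_2 \equiv 0$.} Now $AE_1 = b_1 E_1$, $AE_2 = b_4 E_2$, $AE_3 = 0$, and \eqref{eqn:2.26}, \eqref{eqn:2.28} reduce to the decoupled Riccati equations $E_3 b_1 = \tfrac{\sqrt{1-C^2}}{2} + b_1^2\sqrt{\tfrac{1-C}{1+C}}$ and $E_3 b_4 = -\tfrac{\sqrt{1-C^2}}{2} - b_4^2\sqrt{\tfrac{1+C}{1-C}}$, while \eqref{eqn:2.29}, \eqref{eqn:2.30} give $E_1 b_1 = E_2 b_1 = 0$ (from $E_2 b_1 = E_1 b_2 = 0$ and, after more care, $E_1 b_1$ as well, using $E_2 b_2 = 0$ and the bracket relations) so that $b_1, b_4$ depend only on the $E_3$-parameter. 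Integrating the Riccati equations along integral curves of $E_3$ gives $b_1 = \sqrt{\tfrac{1+C}{1-C}}\,\tan\!\big(\sqrt{\tfrac{1-C}{2}}\,t + c_1\big)$-type expressions; absorbing the constant into the parameter, this matches exactly the principal curvatures of the stated immersion $\Phi$. To finish I would verify directly that the map $\Phi$ of \eqref{eqn:6.1} is an isometric immersion into $\mathbb{S}^2\times\mathbb{S}^2$ with the correct induced metric, with unit normal having $C = g(PN,N)$ the prescribed constant, and with shape operator of precisely this form — the curves $\gamma,\tilde\gamma$ being arbitrary because $b_1,b_4$ carry no $E_1,E_2$ dependence — and then appeal to the uniqueness theorem (as in \cite{K,L-T-V}, the same tool used in Theorem~\ref{thm:3.1}) to conclude $M$ is locally congruent to an open part of $\Phi(\Omega)$. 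Conversely one checks $\Phi$ indeed has constant $C$ and constant $b_2 = 0$, giving the "if" direction.

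\textbf{The main obstacle} I anticipate is not any single step but Step 2: showing that a constant nonzero $b_2$ together with the proportionality $(1-C)b_1=(1+C)b_4$ forces the \emph{full} rigidity "$M$ is an open part of $M_t$," rather than just constancy of the principal curvatures. One must either reconstruct the position vector of $M$ (mimicking \eqref{eqn:6.1} but with the nonzero off-diagonal term, which couples the two $S^2$-factors through the condition $\langle p,q\rangle = t$) or carefully cite Urbano's isoparametric classification and check the principal-curvature data singles out the $M_t$ family and no other. The secondary nuisance is the bookkeeping in Step 3 to confirm $E_1 b_1 = 0$ — i.e.\ that $b_1$ really is a function of $t$ alone — which requires chasing \eqref{eqn:2.29}--\eqref{eqn:2.30} together with the brackets \eqref{liebracket} (specialized to $C\ne0$) rather than just reading off one equation.
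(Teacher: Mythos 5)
Your Step 1 and Step 2 are essentially the paper's argument: equating the two expressions for $E_3b_1$ obtained from \eqref{eqn:2.26} and \eqref{eqn:2.28} after substituting $b_4=\tfrac{1-C}{1+C}b_1$ shows $b_1,b_4$ are constants, so $M$ has constant principal curvatures, and the rigidity you worry about is obtained exactly as you suggest, by citing Urbano (Corollary 1\,(3) of \cite{Ur}: constant mean curvature, constant scalar curvature and constant $C$ force $M_t$). So the nonzero--$b_2$ case is fine.

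The genuine gap is in Step 3, where you assert that ``after more care'' one gets $E_1b_1=0$ as well, so that $b_1,b_4$ depend only on the $E_3$-parameter $t$. This is false, and it cannot be repaired: with $b_2=0$ the Codazzi equations \eqref{eqn:2.29}--\eqref{eqn:2.30} give only $E_2b_1=0$ and $E_1b_4=0$, and integrating the Riccati equations yields $b_1=\sqrt{\tfrac{1+C}{2}}\tan\big(\sqrt{\tfrac{1-C}{2}}(t+h_1)\big)$, $b_4=-\sqrt{\tfrac{1-C}{2}}\tan\big(\sqrt{\tfrac{1+C}{2}}(t+h_2)\big)$ where $h_1,h_2$ are merely $E_3$-constant, not constant. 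Indeed, for the model immersion \eqref{eqn:6.1} the principal curvature $b_1$ involves the geodesic curvature $\kappa(r)$ of $\gamma$ (for $C=0$ this is computed explicitly in the proof of Theorem \ref{thm:3.2aa}), so $E_1b_1\neq 0$ unless $\gamma$ has constant curvature; your claim would shrink the classification to circles and contradict the arbitrariness of $\gamma,\tilde\gamma$ in the very statement you are proving (your parenthetical ``the curves are arbitrary because $b_1,b_4$ carry no $E_1,E_2$ dependence'' has the logic backwards). Consequently the data you would feed into a Bonnet-type uniqueness theorem is wrong, and that route also needs the product-structure data ($T$, $\mu$), which you do not set up. The paper instead keeps $h_1,h_2$ general, builds a commuting frame $X_1=\rho_1E_1$, $X_2=\rho_2E_2$, $X_3=E_3$ with $\rho_1=\cos\big(\sqrt{\tfrac{1-C}{2}}(t+h_1)\big)$, $\rho_2=\cos\big(\sqrt{\tfrac{1+C}{2}}(t+h_2)\big)$, uses $PE_1=E_1$, $PE_2=-E_2$ to show $p=p(t,r)$ and $q=q(t,s)$, and uses that the integral curves of $E_3$ are ambient geodesics (from $AE_3=0$, $\nabla_{E_3}E_3=0$) to integrate the position vector directly, then normalizes to reach \eqref{eqn:6.1}; some such direct integration is needed in place of your Step 3.
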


\begin{proof}
We assume that $b_2$ is constant on $M$. If $b_2$ is a non-zero constant, then by \eqref{eqn:2.27} we have
$$
b_1\sqrt{\tfrac{1-C}{1+C}}-b_4\sqrt{\tfrac{1+C}{1-C}}=0,
$$
which implies that $b_4=\frac{1-C}{1+C}b_1$.
Substituting $b_4=\frac{1-C}{1+C}b_1$ into \eqref{eqn:2.26} and \eqref{eqn:2.28}, we have
$$
\begin{aligned}
E_3b_1&=-\tfrac{1+C}{2}\sqrt{\tfrac{1+C}{1-C}}+b_2^2\sqrt{\tfrac{1+C}{1-C}}
-b_1^2\sqrt{\tfrac{1-C}{1+C}},\\
E_3b_1&=\tfrac{\sqrt{1-C^2}}{2}+b_1^2\sqrt{\tfrac{1-C}{1+C}}-b_2^2\sqrt{\tfrac{1+C}{1-C}}.
\end{aligned}
$$
It follows from the  above two equations that $b_1$ and $b_4$ are also constant on $M$, hence $M$ has constant principal curvatures. Therefore, $M$ has constant $C$, constant mean curvature and constant scalar curvature.
According to Corollary 1 (3) of \cite{Ur}, $M$ is an open part of $M_t$ for some $t\in(-1,1)$.  Conversely, one can verify directly  that the hypersurfaces $\{M_t,~t\in(-1,1)\}$  have constant $C$ and satisfy that  $b_2$ is a non-zero  constant (cf. Section 3.2 of \cite{Ur}).

In the following, we consider the case with $b_2=0$ on $M$.
By \eqref{eqn:2.26} and \eqref{eqn:2.28}, we have
$$
\begin{aligned}
&E_3b_1=\tfrac{\sqrt{1-C^2}}{2}+b_1^{2}\sqrt{\tfrac{1-C}{1+C}},\\
&E_3b_4=-\tfrac{\sqrt{1-C^2}}{2}-b_4^2\sqrt{\tfrac{1+C}{1-C}}.
\end{aligned}
$$
The above two equations can be  integrated along the integral curves of $E_3$ directly
and we obtain that
\begin{equation}\label{eqn:b1b4}
\begin{aligned}
&b_1=\sqrt{\tfrac{1+C}{2}}\tan\big(\sqrt{\tfrac{1-C}{2}}(t+h_1)\big),\\
&b_4=-\sqrt{\tfrac{1-C}{2}}\tan\big(\sqrt{\tfrac{1+C}{2}}(t+h_2)\big),
\end{aligned}
\end{equation}
where $h_1$ and $h_2$ are two local functions on $M$ which satisfy that $E_3h_1=E_3h_2=0$.

Next, we choose two appropriate non-zero functions $\rho_1$ and $\rho_2$, such that
the new frame
$$
\{X_1=\rho_1E_1,\ X_2=\rho_2E_2,\ X_3=E_3\}
$$
satisfies that $[X_1,X_2]=[X_1,X_3]=[X_2,X_3]=0$. Specifically, by using
$[E_i,E_j]=\nabla_{E_i} {E_j}-\nabla_{E_j} {E_i}$, \eqref{eqn:2.25} and $b_2=0$, we get
$$
[E_1,E_2]=0,\ [E_1,E_3]=-b_1\sqrt{\tfrac{1-C}{1+C}}E_1,\ [E_2,E_3]=b_4\sqrt{\tfrac{1+C}{1-C}}E_2.
$$
Therefore
$$
\begin{aligned}
{\text [X_1,X_3]}&=-(\sqrt{\tfrac{1-C}{1+C}}\rho_1b_1+E_3\rho_1)E_1.
\end{aligned}
$$
Then $[X_1,X_3]=0$ is equivalent to
\begin{equation}\label{eqn:m1}
E_3\rho_1=-\sqrt{\tfrac{1-C}{1+C}}\rho_1b_1.
\end{equation}
Similarly, we get
$$
{\text [X_2,X_3]}=(\sqrt{\tfrac{1+C}{1-C}}\rho_2b_4-E_3\rho_2)E_2,\ \
{\text [X_1,X_2]}=\rho_1(E_1\rho_2)E_2-\rho_2(E_2\rho_1)E_1.
$$
Then $[X_2,X_3]=[X_1,X_2]=0$ is  equivalent to
\begin{equation}\label{eqn:m2}
E_3\rho_2=\sqrt{\tfrac{1+C}{1-C}}\rho_2b_4,
\end{equation}
\begin{equation}\label{eqn:dm1dm4}
\rho_1 E_1\rho_2=\rho_2E_2\rho_1=0.
\end{equation}
Then by using \eqref{eqn:b1b4}, we can integrate the equations \eqref{eqn:m1}
and \eqref{eqn:m2} along the integral curves of $E_3$ directly and  obtain that
\begin{equation}\label{eqn:m1m2}
\begin{aligned}
&\rho_1=h_3\cos\big(\sqrt{\tfrac{1-C}{2}}(t+h_1)\big),\\
&\rho_2=h_4\cos\big(\sqrt{\tfrac{1+C}{2}}(t+h_2)\big),
\end{aligned}
\end{equation}
where $h_3$ and $h_4$ are functions on $M$ which satisfy $E_3h_3=E_3h_4=0$.
In particular, we chose $h_3=h_4=1$, and then $\rho_1=\cos\big(\sqrt{\tfrac{1-C}{2}}(t+h_1)\big)$,
$\rho_2=\cos\big(\sqrt{\tfrac{1+C}{2}}(t+h_2)\big)$.

On the other hand, from \eqref{eqn:b1b4}, we have
$$
t+h_1=\sqrt{\tfrac{2}{1-C}}\arctan(\sqrt{\tfrac{2}{1+C}}b_1),\ \
t+h_2=\sqrt{\tfrac{2}{1+C}}\arctan(-\sqrt{\tfrac{2}{1-C}}b_4).
$$
Combining these equations with $\rho_1=\cos\big(\sqrt{\tfrac{1-C}{2}}(t+h_1)\big)$,
$\rho_2=\cos\big(\sqrt{\tfrac{1+C}{2}}(t+h_2)\big)$, we have
$$
\rho_1=\cos\big(\arctan(\sqrt{\tfrac{2}{1+C}}b_1)\big),\ \
\rho_2=\cos\big(\arctan(-\sqrt{\tfrac{2}{1-C}}b_4)\big).
$$
From \eqref{eqn:2.29}, \eqref{eqn:2.30} and $b_2=0$,
it follows that $E_1\rho_2=E_2\rho_1=0$, which implies that $[X_1,X_2]=[X_1,X_3]=[X_2,X_3]=0$.

%Taking the following decomposition of the tangent bundle of $M$:
%$$
%TM={\rm Span}\{E_1,E_2\}\oplus{\rm Span}\{E_3\}.
%$$

%Using \eqref{eqn:2.25}, we have $[E_1,E_2]=0$ and $\nabla_{E_3} {E_3}=0$.
%It follows that both $\rm{Span}\{E_3\}$ and $\rm{Span}\{E_1,E_2\}$ are integrable
%distributions. Let $M_1$ and $M_2$ be the integral manifolds of $\rm{Span}\{E_3\}$
%and $\rm{Span}\{E_1,E_2\}$, respectively.
%By $AE_3=0$, $\nabla_{E_3}{E_3}=0$ and $J_1\rm{Span}\{E_1,E_2\}\bot\rm{Span}\{E_1,E_2\}$,
%we have that
%$X_3$ is a geodesic of $\mathbb{S}^2\times\mathbb{S}^2$, $M_2$ is a flat Lagrangian surface
%of $\mathbb{S}^2\times \mathbb{S}^2$.
%By $AE_3=0$ and $\nabla_{E_3}{E_3}=0$, we have that
%$X_3$ is a geodesic of $\mathbb{S}^2\times\mathbb{S}^2$.
%In particular, $M$ is locally diffeomorphic to $\mathbb{R}^3$.

By the definition of the frame $\{X_1,X_2,X_3\}$ and using $[X_1,X_2]=[X_1,X_3]=[X_2,X_3]=0$,
we can identify $M$ with an open subset $\Omega$ of $\mathbb{R}^3$ locally and express the hypersurface $M$ by an immersion %$\Phi:M\rightarrow \mathbb{S}^2\times\mathbb{S}^2$,
%with the parametrization
%$(t,r,s)$ of $\mathbb{R}^3$ such that
$$
\begin{aligned}
\Phi:\Omega%\subset\mathbb{R}^3
\longrightarrow\mathbb{S}^2\times\mathbb{S}^2,\ \ \
(t,r,s)\mapsto (p(t,r,s),q(t,r,s)),
\end{aligned}
$$
such that $(\frac{\partial p}{\partial t},\frac{\partial q}{\partial t})=E_3$, and
$(\frac{\partial p}{\partial r},\frac{\partial q}{\partial r}),
(\frac{\partial p}{\partial s},\frac{\partial q}{\partial s})$
are collinear with $E_1,E_2$, respectively.
By the definition of $P$  and using
$$
PE_1=E_1,\ \ PE_2=-E_2,
$$
we obtain that $dp,dq:T(\Omega)\rightarrow
T\mathbb{S}^2$ satisfy
\begin{equation}\label{eqn:5.49}
\left\{
\begin{aligned}
(dp(\tfrac{\partial}{\partial r}),0)&=\tfrac{1}{2}(d\Phi(\tfrac{\partial}{\partial r})+Pd\Phi(\tfrac{\partial}{\partial r}))=d\Phi(\tfrac{\partial}{\partial r}),\\
(0,dq(\tfrac{\partial}{\partial r}))&=\tfrac{1}{2}(d\Phi(\tfrac{\partial}{\partial r})-Pd\Phi(\tfrac{\partial}{\partial r}))=0,
\end{aligned}
\right.
\end{equation}
%
%\vskip-3mm
%
and
\begin{equation}\label{eqn:5.50}
\left\{
\begin{aligned}
(dp(\tfrac{\partial}{\partial s}),0)&=\tfrac{1}{2}(d\Phi(\tfrac{\partial}{\partial s})+Pd\Phi(\tfrac{\partial}{\partial s}))=0,\\
(0,dq(\tfrac{\partial}{\partial s}))&=\tfrac{1}{2}(d\Phi(\tfrac{\partial}{\partial s})-Pd\Phi(\tfrac{\partial}{\partial s}))=d\Phi(\tfrac{\partial}{\partial s}).
\end{aligned}
\right.
\end{equation}

The first equation of \eqref{eqn:5.50} shows that $p$ depends only on the $(t,r)$.
Similarly, from the second equation in \eqref{eqn:5.49} we derive that $q$ depends only on
the $(t,s)$. By $g(PE_3,E_3)=-C$, $AE_3=0$ and $\nabla_{E_3}{E_3}=0$, we have that
integral curves of $E_3$ are geodesics of $\mathbb{S}^2\times\mathbb{S}^2$, and we can assume that
%Then, due to the integral curves of $E_3$ are geodesics of
%$\mathbb{S}^2\times\mathbb{S}^2$ and $E_3=(v_1,v_2)$, $|v_1|=\sqrt{\tfrac{1-C}{2}}$,
%$|v_2|=\sqrt{\tfrac{1+C}{2}}$, we can assume that
%
\begin{equation*}
\begin{aligned}
&p(t,r)=\cos(\sqrt{\tfrac{1-C}{2}}t+c_1(r))\gamma(r)+\sin(\sqrt{\tfrac{1-C}{2}}t+c_1(r))
(m_1(r)\frac{d\gamma(r)}{dr}+m_2(r)N(r)),\\
&q(t,s)=\cos(\sqrt{\tfrac{1+C}{2}}t+c_2(s))\tilde{\gamma}(s)+\sin(\sqrt{\tfrac{1+C}{2}}t+c_2(s))
(n_1(s)\frac{d\tilde{\gamma}(s)}{ds}+n_2(s)\tilde{N}(s)),
\end{aligned}
\end{equation*}
where $\gamma(r),\tilde{\gamma}(s)\in\mathbb{S}^2$,
$N(r)$ and $\tilde{N}(s)$ are unit normal vector fields of $\gamma(r)$ and $\tilde{\gamma}(s)$
in $\mathbb{S}^2$, respectively.
We can re-parametrize the curves to use  arc length parameters (which we still denote by $r$ and $s$), then
$|\frac{d\gamma(r)}{dr}|=|\frac{d\tilde{\gamma}(s)}{ds}|=m_1^2(r)+m_2^2(r)=n_1^2(s)+n_2^2(s)=1$.
Assume that $k(r)$ and $\tilde{k}(s)$ are the curvatures of $\gamma(r)$
and $\tilde{\gamma}(s)$, respectively, then we have
\begin{equation}\label{eq:qx}
\begin{aligned}
\frac{d^2\gamma(r)}{dr^2}&=-\gamma(r)+k(r)N(r), \ \
\frac{dN(r)}{dr}=-k(r)\frac{d\gamma(r)}{dr},\\
\frac{d^2\tilde{\gamma}(s)}{ds^2}&=-\tilde{\gamma}(s)+\tilde{k}(s)\tilde{N}(s), \ \
\frac{d\tilde{N}(s)}{ds}=-\tilde{k}(s)\frac{d\tilde{\gamma}(s)}{ds}.
\end{aligned}
\end{equation}
By using the fact $g(E_1,E_3)=g(E_2,E_3)=0$, we have
$$
g((\frac{\partial p}{\partial t},\frac{\partial q}{\partial t}),
(\frac{\partial p}{\partial r},\frac{\partial q}{\partial r}))=
g((\frac{\partial p}{\partial t},\frac{\partial q}{\partial t}),
(\frac{\partial p}{\partial s},\frac{\partial q}{\partial s}))=0,
$$
which implies that
\begin{equation}\label{eq:6.4}
\frac{dc_1(r)}{dr}+m_1(r)=\frac{dc_2(s)}{ds}+n_1(s)=0.
\end{equation}

Finally, we take a transformation to simplify the expression of $(p,q)$ as follows.
\begin{equation*}
\begin{aligned}
p(t,r)&=\cos(\sqrt{\tfrac{1-C}{2}}t+c_1(r))\gamma(r)+\sin(\sqrt{\tfrac{1-C}{2}}t+c_1(r))
(m_1(r)\frac{d\gamma(r)}{dr}+m_2(r)N(r))\\
&=\cos(\sqrt{\tfrac{1-C}{2}}t)V_1(r)+\sin(\sqrt{\tfrac{1-C}{2}}t)V_2(r),
\end{aligned}
\end{equation*}
where
$$
\begin{aligned}
V_1(r)&=\cos(c_1(r))\gamma(r)+\sin(c_1(r))(m_1(r)\frac{d\gamma(r)}{dr}+m_2(r)N(r)),\\
V_2(r)&=-\sin(c_1(r))\gamma(r)+\cos(c_1(r))(m_1(r)\frac{d\gamma(r)}{dr}+m_2(r)N(r)).
\end{aligned}
$$
By using \eqref{eq:6.4}, it can be checked directly that $|V_1(r)|=|V_2(r)|=1$ and
$\langle V_1(r),V_2(r)\rangle=\langle \frac{dV_1(r)}{dr},V_2(r)\rangle=0$,
i.e., $V_2(r)$ is the unit normal vector field of $V_1(r)\hookrightarrow \mathbb{S}^2$.
For $q(t,s)$, we can take a similar transformation.
Furthermore, by taking re-parameterizations to make $r$ and $s$ being arc length parameters,
$M$ is locally given by map $\Phi: \Omega\subset\mathbb{R}^{3}\longrightarrow \mathbb{S}^2\times\mathbb{S}^2$:
\ $(t,r,s)\longrightarrow(p,q)$ defined by \eqref{eqn:6.1}.
Conversely, one can verify directly  that the
hypersurfaces constructed by \eqref{eqn:6.1} have constant $C$ and satisfy that $b_2=0$.
\end{proof}

Armed with Lemma \ref{lemma:2.1aaa1}, Theorem \ref{thm:3.1} and Proposition \ref{prop:3.61},
we  obtain the following classification result for
 minimal hypersurface of $\mathbb{S}^2\times \mathbb{S}^2$ with constant $C=0$.
\begin{theorem}\label{thm:3.2aa}
Let $M$ be a minimal hypersurface of $\mathbb{S}^2\times \mathbb{S}^2$ with constant $C=0$.
Then, either
\begin{enumerate}
\item[(1)]
$M$ is an open part of $M_0$; or

\item[(2)]
$M$ is an open part of $M_{a,b}$ for some $a,b\in\mathbb{S}^2$; or

\item[(3)]
$M$ is an open part of a minimal hypersurface described by Theorem \ref{thm:3.1}.
\end{enumerate}
\end{theorem}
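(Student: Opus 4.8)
The plan is to match each possibility of Lemma~\ref{lemma:2.1aaa1} to one of the three listed models, invoking Theorem~\ref{thm:3.1} and Proposition~\ref{prop:3.61}. Since the hypothesis $C=0$ is a constant different from $\pm 1$, Lemma~\ref{lemma:2.3} applies: in the adapted frame $\{E_1,E_2,E_3\}$ of \eqref{eqn:2.5} one has $b_3=b_5=b_6=0$ and $AE_3=0$, so the induced metric, the shape operator, and the tensors $T,\mu$ (hence the product structure $P$ along $M$) are all determined by the scalar functions $b_1,b_2,b_4$, while minimality reads $b_1+b_4=0$. By Lemma~\ref{lemma:2.1aaa1}, one of its three cases holds. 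In \emph{Case 3}, where $b_2$ is not constant, nothing is left to prove: this is exactly the hypothesis of Theorem~\ref{thm:3.1}, whose first assertion produces local coordinates $(u,v,t)$ and a nonzero function $h(u,v)$ solving the sinh-Gordon equation with $b_1,b_2,b_4$ as in \eqref{eq:2.ab15}, so $M$ is an open part of a minimal hypersurface described by Theorem~\ref{thm:3.1}, which is alternative~(3).

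In \emph{Case 1}, where $b_1=b_4=0$ and $b_2=\tfrac{1}{\sqrt{2}}$, the function $b_2$ is a nonzero constant, so Proposition~\ref{prop:3.61}(1) gives that $M$ is an open part of $M_t$ for some $t\in(-1,1)$. Each $M_t$ has constant mean curvature, and, by property~(i) recalled in the Introduction, only $M_0$ is minimal; hence minimality of $M$ forces $t=0$, which is alternative~(1). (Equivalently, in Case~1 the hypersurface has three distinct constant principal curvatures $\tfrac{1}{\sqrt{2}},-\tfrac{1}{\sqrt{2}},0$ and constant scalar curvature, so it is $M_0$ by Corollary~1(3) of \cite{Ur}.)

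In \emph{Case 2}, where $b_2\equiv 0$ and $b_1=-b_4=\tfrac{1}{\sqrt{2}}\tan\big(\tfrac{t-k}{\sqrt{2}}\big)$, the function $b_2$ equals the constant $0$, so Proposition~\ref{prop:3.61}(2) presents $M$ locally as the immersion \eqref{eqn:6.1} with $C=0$. The remaining step, which I expect to carry the real weight, is to recognize this immersion as an open part of $M_{a,b}$ for suitable $a,b\in\mathbb{S}^2$. I would argue by rigidity: after translating the geodesic parameter $t$ to arrange $k=0$, the data of $M$ in its adapted frame are completely explicit, namely $C=0$, $b_2=0$, $b_1=-b_4=\tfrac{1}{\sqrt{2}}\tan\big(\tfrac{t}{\sqrt{2}}\big)$; a direct computation from the defining relation $\langle p,a\rangle+\langle q,b\rangle=0$ shows that $M_{a,b}$ is minimal with $C=0$ and, after the analogous normalization of its own geodesic parameter, realizes exactly these same functions. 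By Lemma~\ref{lemma:2.3}, the functions $b_1,b_2,b_4$ together with $C=0$ determine the first fundamental form, the shape operator and the product structure along the hypersurface, so the uniqueness part of the fundamental theorem for hypersurfaces of $\mathbb{S}^2\times\mathbb{S}^2$ (cf.~\cite{K,L-T-V}) gives that $M$ is locally congruent to $M_{a,b}$, which is alternative~(2). Concretely, the obstacle is to verify that the apparent freedom in the two curves $\gamma,\tilde{\gamma}$ entering \eqref{eqn:6.1} is entirely absorbed by isometries of $\mathbb{S}^2\times\mathbb{S}^2$; an alternative route, in the spirit of Remark~\ref{rem:3.1aa}(ii), observes that Case~2 corresponds to the trivial solution $h\equiv 0$ of the sinh-Gordon equation and reruns the construction of Theorem~\ref{thm:3.1} with $h\equiv 0$, producing $M_{a,b}$ directly. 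Collecting the three cases yields the stated trichotomy.
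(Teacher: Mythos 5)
Your overall architecture is exactly the paper's: split via Lemma \ref{lemma:2.1aaa1}, send Case 3 to Theorem \ref{thm:3.1}, Case 1 to $M_0$, and Case 2 to $M_{a,b}$. Cases 1 and 3 are handled correctly and essentially as in the paper (the paper identifies Case 1 by noting the mean and scalar curvatures are constant and invoking Corollary 1(3) of \cite{Ur} together with the fact that $M_0$ is the only minimal $M_t$; your route through Proposition \ref{prop:3.61}(1) is equivalent).

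The crux is Case 2, and there your argument stops exactly where the real work begins. You yourself flag the obstacle: it is not clear a priori that the freedom in the two curves $\gamma,\tilde\gamma$ of \eqref{eqn:6.1} is absorbed by isometries, and indeed for generic curves it is not (Proposition \ref{prop:3.61}(2) produces many non-congruent examples). The paper closes this by an explicit computation: for the immersion \eqref{eqn:6.1} with $C=0$ one finds $b_2=0$ and
$b_1=\tfrac{1}{\sqrt{2}}\,\frac{\sin(t/\sqrt2)+\cos(t/\sqrt2)\kappa(r)}{\cos(t/\sqrt2)-\sin(t/\sqrt2)\kappa(r)}$,
$b_4=-\tfrac{1}{\sqrt{2}}\,\frac{\sin(t/\sqrt2)+\cos(t/\sqrt2)\tilde\kappa(s)}{\cos(t/\sqrt2)-\sin(t/\sqrt2)\tilde\kappa(s)}$,
so $H=0$ forces $\kappa(r)=\tilde\kappa(s)$ to be one and the same constant; both curves are then circles and, up to an isometry, $M\subset M_{a,b}$. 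Your alternative rigidity route can in principle be completed, but as written it has unproved steps: (a) the normalization ``$k=0$'' presupposes that the phase in Case 2 is a genuine constant, which requires the Codazzi equations \eqref{eqn:2.29}--\eqref{eqn:2.30} with $b_2=0$ (giving $E_1b_1=E_2b_1=0$); (b) the adapted-frame data of $M_{a,b}$ must actually be computed and seen to coincide; and (c) before citing the uniqueness theorem of \cite{K,L-T-V} one must exhibit a local isometry between the two abstract manifolds carrying $(g,A,T,\mu)$ to $(g,A,T,\mu)$ — matching the scalar functions $b_1,b_2,b_4$ alone is not yet such an isometry, though one can build it from adapted coordinates as in the proof of Proposition \ref{prop:3.61}. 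Until (a)--(c) (or the paper's curvature computation) are supplied, the identification of Case 2 with $M_{a,b}$ — the only nontrivial point of the theorem beyond quoting earlier results — remains a gap.
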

\begin{proof}
By Lemma \ref{lemma:2.1aaa1}, under the frame $\{E_1,E_2,E_3\}$, there are three cases
of $b_1,b_2,b_4$ on $M$, and  Case 3 has been classified by Theorem \ref{thm:3.1}.

For Case 1, it is not hard to see that $M$ has constant mean curvature and
constant scalar curvature. Then, by Corollary 1 (3) of \cite{Ur} and the fact that $M_0$
is the only minimal hypersurface among $M_t$, we know that  Case 1 corresponds to the
hypersurface $M_0=\{(p,q)\in\mathbb{S}^2\times \mathbb{S}^2|\ \langle p,q\rangle=0\}$.

For Case 2, by Proposition \ref{prop:3.61}, $M$ is locally given by the immersion
$\Phi: \Omega\subset\mathbb{R}^{3}\longrightarrow \mathbb{S}^2\times\mathbb{S}^2$:
\ $(t,r,s)\longrightarrow(p,q)$ defined by \eqref{eqn:6.1} with $C=0$. Hence, we only need to
determine the specific curves $\gamma(r)$ and $\tilde{\gamma}(s)$.
We assume as in the proof of Proposition \ref{prop:3.61} that $\kappa(r)$ and $\tilde{\kappa}(s)$ are the  curvatures of $\gamma(r)$ and
$\tilde{\gamma}(s)$, respectively. Then, \eqref{eq:qx} hold on $\gamma(r)$ and
$\tilde{\gamma}(s)$.
By direct calculations, we have
$$E_1=(\frac{d\gamma(r)}{d r},0),\ E_2=(0,\frac{d\tilde{\gamma}(s)}{d s}),
\ E_3=(\frac{\partial p}{\partial t},\frac{\partial q}{\partial t}),
\ N=(\frac{\partial p}{\partial t},-\frac{\partial q}{\partial t}),$$
and
$$
\begin{aligned}
b_1&=g(AE_1,E_1)=\tfrac{1}{\sqrt{2}}\frac{\sin(\frac{t}{\sqrt{2}})
+\cos(\frac{t}{\sqrt{2}})\kappa(r)}{\cos(\frac{t}{\sqrt{2}})
-\sin(\frac{t}{\sqrt{2}})\kappa(r)}, \ \ b_2=g(AE_1,E_2)=0,\\
b_4&=g(AE_2,E_2)=-\tfrac{1}{\sqrt{2}}\frac{\sin(\frac{t}{\sqrt{2}})
+\cos(\frac{t}{\sqrt{2}})\tilde{\kappa}(s)}{\cos(\frac{t}{\sqrt{2}})
-\sin(\frac{t}{\sqrt{2}})\tilde{\kappa}(s)}.
\end{aligned}
$$
From $H=\frac{1}{3}(b_1+b_4)=0$, we have that $\kappa(r)$ and $\tilde{\kappa}(s)$
are equal to the same constant. Up to an isometry of $\mathbb{S}^2\times \mathbb{S}^2$,
we assume that $\gamma(r)$ and $\tilde{\gamma}(s)$ are both
$\mathbb{S}^1(r_1)\hookrightarrow \mathbb{S}^2$ such that the third coordinate component
is $\sqrt{1-r_1^2}$, then $M$ satisfies that
$\langle p,(0,0,1)\rangle+\langle q,(0,0,-1)\rangle=0$ for any $(p,q)\in M$.
Therefore, in general,  $M$ is an open part of $M_{a,b}$ for some $a,b\in\mathbb{S}^2$.
\end{proof}

%==================================================
\section{Proofs of Theorem \ref{thm:1.1} and Theorem \ref{thm:1.2}}\label{sect:4}~

In this section, we prove  Theorem \ref{thm:1.1} and Theorem \ref{thm:1.2}.
As the first key step, by using the so-called Tsinghua principle, which was first discovered by
the first three authors in 2013 at Tsinghua University \cite{A-L-V-W}, we establish a very useful identity
on any hypersurface $M$ of $\mathbb{S}^2\times\mathbb{S}^2$ with constant sectional curvature.

\begin{lemma}\label{lemma:3.2aa}
Let $M$ be a hypersurface of $\mathbb{S}^2\times \mathbb{S}^2$ with constant sectional curvature  $\kappa$.
Then, for any tangent vector fields $W,U,Y,Z\in TM$, the following equation holds:
\begin{equation}\label{eqn:3.1}
\mathop\mathfrak{S}\limits_{W,U,Y}\mathbf{I}(W,U,Y,Z)=0,
\end{equation}
where the symbol $\mathfrak{S}$ stands for the cyclic summation,
and $\mathbf{I}(W,U,Y,Z)$ is defined by
\begin{equation*}
\mathbf{I}(W,U,Y,Z):=\tfrac{1}{2}\{-g(TY,Z)g(AW,TU)+g(TU,Z)g(AW,TY)\}.
\end{equation*}
\end{lemma}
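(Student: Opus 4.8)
The plan is to apply the \emph{Tsinghua principle}: differentiate the Codazzi equation \eqref{eqn:2.2} once more, substitute the resulting second covariant derivative of $A$ into the Ricci identity \eqref{eqn:ric}, and then invoke a purely combinatorial cyclic-sum identity so that every term containing an (unknown) covariant derivative of $A$ drops out, leaving only the algebraic quantity $\mathbf I$. To organize the computation, set $S(W,Y,Z):=(\nabla^{2}A)(W,Y,Z)$ and write $\Phi(Y,Z):=\tfrac12\bigl(g(Y,X)\,TZ-g(Z,X)\,TY\bigr)$ for the right-hand side of \eqref{eqn:2.2}, so that $(\nabla_YA)Z-(\nabla_ZA)Y=\Phi(Y,Z)$.

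First I would record the two basic relations. Applying $\nabla_W$ to \eqref{eqn:2.2} and using the Codazzi equation again (with $Y$, resp. $Z$, replaced by $\nabla_WY$, resp. $\nabla_WZ$) to absorb the connection terms, one obtains
\[
S(W,Y,Z)-S(W,Z,Y)=(\nabla_W\Phi)(Y,Z),
\]
whereas the Ricci identity \eqref{eqn:ric}, using that $M$ has constant sectional curvature $\kappa$ so that $R(W,Y)Z=\kappa\bigl(g(Y,Z)W-g(W,Z)Y\bigr)$, gives for every $V\in TM$
\[
g\bigl(S(W,Y,Z)-S(Y,W,Z),\,V\bigr)=-g\bigl(R(W,Y)Z,AV\bigr)-g\bigl(R(W,Y)V,AZ\bigr).
\]
The combinatorial heart of the argument is that for an arbitrary $(1,3)$-tensor $S$ one has, by writing out the six terms on each side,
\[
\mathop{\mathfrak{S}}\limits_{W,Y,Z}\bigl[S(W,Y,Z)-S(Y,W,Z)\bigr]=\mathop{\mathfrak{S}}\limits_{W,Y,Z}\bigl[S(W,Y,Z)-S(W,Z,Y)\bigr].
\]

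Then I would evaluate both sides. For the left-hand side, the first Bianchi identity $\mathop{\mathfrak{S}}R(W,Y)Z=0$ annihilates the $g(R(W,Y)Z,AV)$ part, and the constant-curvature form of $R$ together with the symmetry of $A$ makes the cyclic sum of the $g(R(W,Y)V,AZ)$ part vanish as well; hence the left-hand side is $0$, so $\mathop{\mathfrak{S}}(\nabla_W\Phi)(Y,Z)=0$. (This is the only place the constant-curvature hypothesis is used.) For the right-hand side, I expand $(\nabla_W\Phi)(Y,Z)=\tfrac12\bigl[g(Y,\nabla_WX)\,TZ+g(Y,X)(\nabla_WT)Z-g(Z,\nabla_WX)\,TY-g(Z,X)(\nabla_WT)Y\bigr]$ and substitute the formulas of Lemma \ref{lemma:2.1}, namely $\nabla_WX=CAW-TAW$ and $(\nabla_WT)Z=g(AW,Z)X+\mu(Z)AW$, together with $\mu(Z)=g(PZ,N)=g(X,Z)$ (from symmetry of $P$ and $PN=CN+X$). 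Taking the cyclic sum over $W,Y,Z$, the $C$-terms, the $X$-terms and the $\mu$-terms all cancel by the symmetry of $A$ and $T$, leaving
\[
0=\mathop{\mathfrak{S}}\limits_{W,Y,Z}(\nabla_W\Phi)(Y,Z)=\tfrac12\mathop{\mathfrak{S}}\limits_{W,Y,Z}\bigl[g(AW,TZ)\,TY-g(AW,TY)\,TZ\bigr].
\]
Taking the inner product with an arbitrary $V\in TM$ and renaming $(W,Y,Z,V)$ as $(W,U,Y,Z)$, the summand becomes exactly $2\,\mathbf I(W,U,Y,Z)$, which is the asserted identity \eqref{eqn:3.1}.

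I expect the main obstacle to be the bookkeeping in this last evaluation — faithfully expanding $\mathop{\mathfrak{S}}(\nabla_W\Phi)(Y,Z)$ via Lemma \ref{lemma:2.1}, verifying the three batches of cancellations, and keeping the sign conventions in \eqref{eqn:2.2}, \eqref{eqn:ric} and the definition of $\nabla^{2}A$ mutually consistent. Conceptually, however, the only real idea is the observation that constant sectional curvature forces the cyclic sum of the Ricci-identity terms to vanish, so that the Codazzi-derivative terms are compelled to cancel cyclically among themselves.
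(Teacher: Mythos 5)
Your proposal is correct and follows essentially the same route as the paper's proof: differentiate the Codazzi equation, compare with the Ricci identity via the cyclic-sum rewriting, use the constant-curvature form of $R$ (plus the Bianchi identity and symmetry of $A$) to kill one side, and evaluate the other side with Lemma \ref{lemma:2.1}, where the $C$-, $X$- and $\mu$-terms indeed cancel under the cyclic sum. The only cosmetic point is the stated factor ($2\mathbf{I}$ versus $\mathbf{I}$, depending on whether the prefactor $\tfrac12$ is included in ``the summand''), which does not affect the conclusion $\mathop{\mathfrak{S}}\mathbf{I}=0$.
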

\begin{proof}

In order to prove Lemma \ref{lemma:3.2aa}, we calculate the expression of the cyclic
summation
\begin{equation}\label{eqn:3.2}
\mathfrak{A}:=\mathop\mathfrak{S}\limits_{W,U,Y} \big\{g((\nabla^2 A)(W,U,Y),Z)-
g((\nabla^2 A)(W,Y,U),Z)\big\}
\end{equation}
in two different ways.
Specifically, on the one hand, we take the covariant derivative
of the Codazzi equation \eqref{eqn:2.2} and obtain that
\begin{equation*}
\begin{split}
&g((\nabla^2 A)(W,U,Y),Z)-g((\nabla^2 A)(W,Y,U),Z)\\
&=\tfrac{1}{2}\{g((\nabla_W T)Y,Z)\mu(U)+g(TY,Z)(\nabla_W \mu)U\\
&\ \ -g((\nabla_W T)U,Z)\mu(Y)-g(TU,Z)(\nabla_W \mu)Y\}.
\end{split}
\end{equation*}
By direct calculations, with the use of Lemma \ref{lemma:2.1},
we have
\begin{equation*}
\begin{split}
g&((\nabla^2 A)(W,U,Y),Z)-g((\nabla^2 A)(W,Y,U),Z)\\
&=\tfrac{1}{2}\{g(AW,Y)\mu(Z)\mu(U)-g(TY,Z)g(AW,TU)+Cg(TY,Z)g(AW,U)\\
&\ \ -g(AW,U)\mu(Z)\mu(Y)+g(TU,Z)g(AW,TY)-Cg(TU,Z)g(AW,Y)\}.
\end{split}
\end{equation*}
Thus, it can be checked directly that
\begin{equation}\label{eqn:3.3aa}
\mathfrak{A}=\mathop\mathfrak{S}\limits_{W,U,Y}\mathbf{I}(W,U,Y,Z).
\end{equation}

On the other hand, the cyclic sum $\mathfrak{A}$ can be rewritten as
\begin{equation}\label{eqn:3.3}
\mathfrak{A}=\mathop\mathfrak{S}\limits_{W,U,Y}\big\{g((\nabla^{2}A)(W,U,Y),Z)
-g((\nabla^{2}A)(U,W,Y),Z)\big\}.
\end{equation}
Then we can apply the Ricci identity \eqref{eqn:ric} and obtain that
\begin{equation}\label{eqn:3.4}
\mathfrak{A}=-\mathop\mathfrak{S}\limits_{W,U,Y} \big\{g(R(W,U)Y,AZ)
+g(R(W,U)Z,AY)\big\}.
\end{equation}
Since $M$ has constant sectional curvature $\kappa$
of $\mathbb{S}^2\times \mathbb{S}^2$, we have
\begin{equation}\label{eqn:3.6aa}
g(R(W,U)Y,Z)=\kappa\{g(U,Y)g(W,Z)-g(W,Y)g(U,Z)\}.
\end{equation}
It follows directly  that
$$
\mathfrak{A}=-\mathop\mathfrak{S}\limits_{W,U,Y} \big\{g(R(W,U)Y,AZ)
+g(R(W,U)Z,AY)\big\}=0,
$$
which combined with  \eqref{eqn:3.3aa} leads to the conclusion of Lemma \ref{lemma:3.2aa}.
\end{proof}

\begin{remark}\label{rem:4.1aaaa}
The method used in the proof of Lemma \ref{lemma:3.2aa}  is
called the Tsinghua principle. This remarkable technique has been applied in many
different situations since its first successful attempt in \cite{A-L-V-W},
see \cite{CHMV-1,D-V-W,LXX,LMVVW,V-W,Y-Y-H} for details.
\end{remark}

%Next, we prove that hypersurface $M$ of $\mathbb{S}^2\times\mathbb{S}^2$
%with constant sectional curvature has constant $C=0$ and constant sectional curvature
%$\kappa=\tfrac{1}{2}$.
\begin{proposition}\label{prop:4.5}
Let $M$ be a hypersurface of $\mathbb{S}^2\times\mathbb{S}^2$ with constant sectional curvature $\kappa$.
Then $\kappa=\tfrac{1}{2}$ and the product angle function $C=0$.
\end{proposition}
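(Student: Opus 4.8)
The plan is to extract scalar consequences from the tensorial identity in Lemma \ref{lemma:3.2aa} by feeding it the canonical frame $\{E_1,E_2,E_3\}$ of \eqref{eqn:2.5}, together with the explicit action \eqref{eqn:2.6} of $T$ on this frame. Since $\mathbf{I}(W,U,Y,Z)$ only involves $T$ on the first three slots and $A$ once, and since $TE_1=E_1$, $TE_2=-E_2$, $TE_3=-CE_3$ are \emph{eigenvectors} of $T$, many choices of $(W,U,Y,Z)$ collapse to very short expressions. First I would run through the choices $(W,U,Y)$ taken from $\{E_1,E_2,E_3\}$ with at most one repetition and $Z\in\{E_1,E_2,E_3\}$; the cyclic sum $\mathop\mathfrak{S}\limits_{W,U,Y}$ over three distinct basis vectors is the place where the eigenvalue pattern $(1,-1,-C)$ of $T$ produces nontrivial constraints on the off-diagonal components $b_3,b_5$ of $A$ (see \eqref{eqn:2.7}). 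I expect this to force relations of the shape $(1\pm C)b_3=0$, $(1\pm C)b_5=0$, and—using that the sectional curvature is constant so the Gauss equation \eqref{eqn:2.1} must match \eqref{eqn:3.6aa}—further algebraic relations among $\kappa$, $C$ and the $b_i$.

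The second ingredient is the Gauss equation itself. Plugging $R(E_i,E_j)E_j$ into \eqref{eqn:2.1} and equating with $\kappa(E_i - 0)$ via \eqref{eqn:3.6aa} gives, for each pair $i\neq j$, an equation of the form
\begin{equation*}
\kappa=\tfrac12\big(1+g(TE_j,E_j)^2\big)-\tfrac12 g(TE_i,E_j)^2+\text{(quadratic in the }b\text{'s)}.
\end{equation*}
Comparing the three sectional curvatures $K(E_1\wedge E_2)$, $K(E_1\wedge E_3)$, $K(E_2\wedge E_3)$ and using $TE_1=E_1$, $TE_2=-E_2$, $TE_3=-CE_3$, the ambient curvature contributions are $\tfrac12(1+1)=1$ for the $E_1E_2$–plane but $\tfrac12(1+C^2)$ for the $E_iE_3$–planes; since all three must equal the same constant $\kappa$, the discrepancy must be absorbed by the shape-operator terms, and combined with the Tsinghua constraints this should pin down $\kappa$. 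I would also bring in \eqref{eqn:2.8}: constant sectional curvature forces the Ricci tensor to be $\tfrac{\rho}{3}\,\mathrm{Id}=2\kappa\,\mathrm{Id}$, so every off-diagonal entry of \eqref{eqn:2.8} vanishes and every diagonal entry equals $2\kappa$. In particular ${\rm Ric}(E_1)-{\rm Ric}(E_2)$ has an $E_i$-component proportional to $C$, which already strongly constrains $C$.

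Once these linear/quadratic relations are in hand, I would argue by cases on whether $C^2=1$ or $C^2<1$. If $C^2\equiv 1$, Lemma \ref{lem:2.2} says $M$ is locally $\Gamma\times(\text{open set of }\mathbb{S}^2)$, which is flat in the $\mathbb{S}^2$-directions of the product only up to the curvature $1$ of the spherical factor, so its sectional curvatures are not all equal (the plane tangent to the $\mathbb{S}^2$ factor has curvature $1$, while a mixed plane has curvature $0$)—contradiction, so $C^2<1$ on a dense open set and we may use the frame \eqref{eqn:2.5}. Then the combined Tsinghua + Gauss + Ricci relations should leave no room except $\kappa=\tfrac12$ and $C=0$; on the set where $C=0$ the previous bullet's curvature-matching already gives $\kappa=\tfrac12$, and elsewhere one derives a contradiction. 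Finally, a continuity/connectedness argument promotes $C\equiv 0$ on all of $M$.

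The main obstacle I anticipate is not any single computation but the bookkeeping: the Tsinghua identity \eqref{eqn:3.1} is a $4$-tensor, and one must choose the right handful of arguments among the many possibilities so that the resulting scalar equations are \emph{independent} and together with the Gauss and Ricci relations actually close up to determine $\kappa$ and $C$, rather than merely constraining them. Concretely, the delicate point is handling the off-diagonal components $b_2,b_3,b_5$: one needs to show they cannot conspire (together with a nonzero value of $C$ or a value of $\kappa\neq\tfrac12$) to satisfy all the equations simultaneously, which likely requires a careful sub-case analysis according to which of $b_3,b_5$ vanish. I would expect the cleanest route is: first kill $b_3=b_5=0$ from the Tsinghua identity when $C\neq\pm1$ (mirroring Lemma \ref{lemma:2.3}), then reduce to the $2\times2$ block $\mathrm{diag\text{-}ish}(b_1,b_2,b_4)$ and finish with the three scalar Gauss equations plus the vanishing of the Ricci off-diagonal term proportional to $C$.
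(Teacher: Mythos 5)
Your opening moves coincide with the paper's: the Tsinghua identity applied to $(E_1,E_2,E_3,Z)$ does give exactly $b_3(1+C)=b_5(1-C)=b_2C=0$, hence $b_3=b_5=0$ when $C\neq\pm1$, and your treatment of the case $C^2\equiv1$ via Lemma \ref{lem:2.2} is the paper's Case II. But the core of your plan — that ``Tsinghua $+$ Gauss $+$ Ricci relations should leave no room except $\kappa=\tfrac12$, $C=0$'' — has a genuine gap: the system does \emph{not} close at the pointwise algebraic level. The Tsinghua identity never kills $b_6$, and you may not ``reduce to the $2\times2$ block in $(b_1,b_2,b_4)$'': that reduction (as in Lemma \ref{lemma:2.3}) uses $\nabla C=-2AX$ and is only available once $C$ is already known to be constant, which is precisely what you are trying to prove. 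On the set where $C\neq0$ one only gets $b_2=0$, and the three Gauss equations then read $b_1b_4=\kappa$, $\tfrac{1-C}{2}+b_1b_6=\kappa$, $\tfrac{1+C}{2}+b_4b_6=\kappa$; these admit pointwise solutions for any $\kappa>0$ with $C^2\le(2\kappa-1)^2$ (e.g.\ $\kappa=1$, $C=\tfrac12$, $b_6^2=\tfrac{3}{16}$), and the Einstein/Ricci conditions, being consequences of the same curvature tensor, add nothing new (the off-diagonal Ricci entries vanish automatically once $b_2=b_3=b_5=0$, and the diagonal difference only gives $C=(b_1-b_4)b_6$). So no contradiction can be extracted algebraically; the missing ingredient is differential. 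The paper solves the Gauss equations for $b_1,b_4,b_6$ as explicit functions of $\kappa$ and the (a priori non-constant) function $C$, differentiates these expressions along $E_3$ using $E_3C=-2b_6\sqrt{1-C^2}$, and compares with the values of $E_3b_1$, $E_3b_4$ forced by the Codazzi equation; the comparison yields $(2\kappa-1)\sqrt{1-C^2}=0$, i.e.\ $\kappa=\tfrac12$, which is incompatible with $0<C^2\le(2\kappa-1)^2$. Without this Codazzi/derivative step (absent from your proposal) the case $C\neq0$ cannot be excluded.

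A smaller but real slip: your ambient curvature contributions are miscomputed. For the plane spanned by eigenvectors $E_i,E_j$ of $T$ with eigenvalues $\lambda_i,\lambda_j$, the contribution is $\tfrac12(1+\lambda_i\lambda_j)$, not $\tfrac12(1+\lambda_j^2)$; so it is $0$ for the $E_1E_2$-plane and $\tfrac{1\mp C}{2}$ for the $E_1E_3$- and $E_2E_3$-planes. The qualitative mismatch you rely on is thus between the two mixed planes (difference $C$), and it can be absorbed by the $b_1b_6$, $b_4b_6$ terms — which is exactly why $b_6$ is the obstruction your argument does not address. Once $C=0$ is established, your conclusion $\kappa=\tfrac12$ does follow as in the paper, since then $b_6=0$ and both mixed planes have curvature $\tfrac12$.
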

\begin{proof}
Assume that hypersurface $M$ has constant sectional curvature $\kappa$.
Since our concern is only local, in order to prove this proposition,
we will divide the discussions into two cases depending on the value of
the product angle function $C$ on $M$.

{\bf Case I}. $C\neq\pm1$ on $M$.

In this case, we can take the local orthonormal frame fields $\{E_1,E_2,E_3\}$
defined by \eqref{eqn:2.5}.
Then, the shape operator $A$, $(1,1)$-tensor $T$ and $1$-form $\mu$
are given by \eqref{eqn:2.6} and \eqref{eqn:2.7}.
By using $\bar{\nabla}P=0$, \eqref{eqn:2.6}, Lemma \ref{lemma:2.1},
we obtain that
\begin{equation}\label{eqn:LL}
\left\{
\begin{aligned}
&E_1C=-2b_3\sqrt{1-C^2},\ E_2C=-2b_5\sqrt{1-C^2},\ E_3C=-2b_6\sqrt{1-C^2},\\ 		
&\nabla_{E_1}E_1=b_1\sqrt{\tfrac{1-C}{1+C}}E_3,\
\nabla_{E_1}E_2=-b_2\sqrt{\tfrac{1+C}{1-C}}E_3,\ \nabla_{E_1}E_3=-b_1\sqrt{\tfrac{1-C}{1+C}}E_1
+b_2\sqrt{\tfrac{1+C}{1-C}}E_2,\\
&\nabla_{E_2} E_1=b_2\sqrt{\tfrac{1-C}{1+C}}E_3,\
\nabla_{E_2} E_2=-b_4\sqrt{\tfrac{1+C}{1-C}}E_3,\ \nabla_{E_2}E_3=-b_2\sqrt{\tfrac{1-C}{1+C}}E_1
+b_4\sqrt{\tfrac{1+C}{1-C}}E_2,\\
&\nabla_{E_3}E_1=b_3\sqrt{\tfrac{1-C}{1+C}}E_3,\
\nabla_{E_3}E_2=-b_5\sqrt{\tfrac{1+C}{1-C}}E_3,\ \nabla_{E_3}E_3=-b_3\sqrt{\tfrac{1-C}{1+C}}E_1
+b_5\sqrt{\tfrac{1+C}{1-C}}E_2.
\end{aligned}\right.
\end{equation}
Taking in \eqref{eqn:3.1},  $
(W,U,Y,Z)=(E_1,E_2,E_3,E_1), \ (E_1,E_2,E_3,E_2), \ (E_1,E_2,E_3,E_3),
$ respectively, %$1\leq i,j,k,l\leq 3$,
we obtain that $b_3(1+C)=b_5(1-C)=b_2C=0$.
Since $C\neq\pm1$, we have $b_3=b_5=0$.

\vskip2mm
{\bf Claim.\ \ $C=0$ and $\kappa=\tfrac{1}{2}$ on $M$.}
%\vskip 1mm

{\it Proof of the Claim:}\ \  We use reduction to absurdity. If $C\neq0$ holds in an open subset of $M$,
then $b_2=0$. By direct calculations, we have
$$
g(R(E_1,E_2)E_2,E_1)=b_1b_4,\
g(R(E_1,E_3)E_3,E_1)=\tfrac{1}{2}-\tfrac{C}{2}+b_1b_6,\
g(R(E_2,E_3)E_3,E_2)=\tfrac{1}{2}+\tfrac{C}{2}+b_4b_6.
$$
Since $C\neq 0$, we have $b_6\neq 0$. As $C\neq\pm1$, it follows that $$\kappa\neq0,~b_6=\tfrac{2\kappa-1+C}{2b_1}=\tfrac{2\kappa-1-C}{2b_4},~b_6^2=\tfrac{(2\kappa-1)^2-C^2}{4\kappa}.$$
If $\kappa<0$, then $(2\kappa-1)^2-C^2>0$,
which contradicts $b_6^2=\tfrac{(2\kappa-1)^2-C^2}{4\kappa}>0$. It follows that $\kappa>0$.
Without loss of generality, we assume that
\begin{equation}\label{eqn:6.2}
b_6=-\tfrac{\sqrt{(2\kappa-1)^2-C^2}}{2\sqrt{\kappa}},\
b_1=-\tfrac{\sqrt{\kappa}(2\kappa-1+C)}{\sqrt{(2\kappa-1)^2-C^2}},
\ b_4=-\tfrac{\sqrt{\kappa}(2\kappa-1-C)}{\sqrt{(2\kappa-1)^2-C^2}}.
\end{equation}

Next, taking $(Y,Z)=(E_3,E_1)$ and $(Y,Z)=(E_3,E_2)$ into Codazzi equation \eqref{eqn:2.2},
with the use of \eqref{eqn:2.6}, \eqref{eqn:LL},
we compare the components of $E_1$ and $E_2$, respectively. It follows that
$$
\begin{aligned}
E_3(b_1)-E_1(b_3)&=\tfrac{\sqrt{1-C^2}}{2}
+2b_3^{2}\sqrt{\tfrac{1-C}{1+C}}-b_6b_1\sqrt{\tfrac{1-C}{1+C}}
+b_1^{2}\sqrt{\tfrac{1-C}{1+C}}-b_2^{2}\sqrt{\tfrac{1+C}{1-C}},\\
E_3(b_4)-E_2(b_5)&=-\tfrac{\sqrt{1-C^2}}{2}-2b_5^{2}\sqrt{\tfrac{1+C}{1-C}}
+b_6b_4\sqrt{\tfrac{1+C}{1-C}}+b_2^{2}\sqrt{\tfrac{1-C}{1+C}}
-b_4^{2}\sqrt{\tfrac{1+C}{1-C}}.
\end{aligned}
$$
By using $b_2=b_3=b_5=0$, the above two equations become
%Then, we check the Codazzi equations \eqref{eqn:2.17} and \eqref{eqn:2.21}, we have
\begin{equation}\label{eqn:6.3}
E_3b_1=\tfrac{\sqrt{1-C}(1+C+2b_1(b_1-b_6))}{2\sqrt{1+C}},\
E_3b_4=\tfrac{\sqrt{1+C}(-1+C+2b_4(b_6-b_4))}{2\sqrt{1-C}}.
\end{equation}
In the following, we take the derivatives of $b_1$ and $b_4$ with respect to $E_3$,
with the use of \eqref{eqn:6.2} and $E_3C=-2b_6\sqrt{1-C^2}$, we obtain that
\begin{equation}\label{eqn:6.4}
E_3b_1=\tfrac{\sqrt{1-C^2}(2\kappa-1)}{1+C-2\kappa},\
E_3b_4=\tfrac{\sqrt{1-C^2}(2\kappa-1)}{-1+C+2\kappa}.
\end{equation}
By \eqref{eqn:6.3} and \eqref{eqn:6.4}, we have $\sqrt{1-C^2}(2\kappa-1)=0$,
from which we get $\kappa=\tfrac{1}{2}$.
However, this contradicts $C\neq0$ and $\sqrt{(2\kappa-1)^2-C^2}\geq0$.
Therefore, the product angle fucntion $C=0$.

Finally, by Lemma \ref{lemma:2.1}, we have $b_3=b_5=b_6=0$.
By direct calculations, we have
$$
g(R(E_1,E_2)E_2,E_1)=b_1b_4-b_2^2, \ \ g(R(E_1,E_3)E_3,E_1)=g(R(E_2,E_3)E_3,E_2)=\tfrac{1}{2},
$$
which implies  that the sectional curvature of $M$ is $\kappa=\tfrac{1}{2}$
and $b_1b_4-b_2^2=\tfrac{1}{2}$ holds on $M$. We have completed the proof of above Claim.

{\bf Case II}. $C^2=1$ on $M$.

In this case, by Lemma \ref{lem:2.2}, we know that $M$ is an open part of
$\Gamma\times \mathbb{S}^2$, where $\Gamma$
is a curve of $\mathbb{S}^2$. It follows that the tangent bundle of $M$ has
a decomposition $TM=T\Gamma\oplus T\mathbb{S}^2$
and we have $A|_{T\mathbb{S}^2}=0$.
Next, we take an  orthonormal basic $\{v_1,v_2,v_3\}$ with $v_1\in T\Gamma$ and $v_2,v_3\in T\mathbb{S}^2$,
after  direct calculations, we have $g(R(v_1,v_2)v_2,v_1)=0$ and $g(R(v_2,v_3)v_3,v_2)=1$.
Therfore, in this case, there exists no hypersurface with constant sectional curvature.
\end{proof}

\begin{remark}\label{rem:4.1aa}
As a direct consequence of Proposition \ref{prop:4.5},  we obtain that there exists no hypersurface in $\mathbb{S}^2\times\mathbb{S}^2$ with constant sectional curvature  and constant mean curvature.
In fact, let $M$ be a hypersurface of $\mathbb{S}^2\times\mathbb{S}^2$ with constant sectional curvature $\kappa$, by Proposition \ref{prop:4.5},  $\kappa=\frac{1}{2}$ and $M$ has constant $C=0$. According to Corollary 1 (3) of \cite{Ur},
$\{M_t,~t\in(-1,1)\}$ are the only hypersurfaces of $\mathbb{S}^2\times\mathbb{S}^2$ which have the mean curvature,
the scalar curvature and the function $C$ constants. However, for any $t\in (-1,1)$, $M_t$ does not have
constant sectional curvature.
\end{remark}

Before we prove Theorem  \ref{thm:1.1} and Theorem \ref{thm:1.2}, we need some preparations.
For any hypersurface $M$ of $\mathbb{S}^2\times\mathbb{S}^2$ with product angle function $C\neq\pm1$ in local,
let $\{E_1, E_2, E_3\}$ be the orthonormal frame on $M$ defined in \eqref{eqn:2.5}. We define the map
$$
\Phi_r: M\rightarrow \mathbb{S}^2\times\mathbb{S}^2,\ \ \ \ \ (p,q) \mapsto \Phi_r((p,q))=\tilde{\exp}_{(p,q)}(rN_{(p,q)}), \quad \Phi_{0}(M)=M,
$$
where $(p,q)\in M$ and ${\rm \tilde{exp}}$ is the Riemannian exponential map of $\mathbb{S}^2\times\mathbb{S}^2$.
For any $(p,q)\in M$, denote $\gamma(t)=\tilde{\exp}_{(p,q)}(tN_{(p,q)})$ the
geodesic satisfying $\gamma(0)=(p,q)$ and $\gamma'(0)=N_{(p,q)}$. Let $\{E_1^r, E_2^r, E_3^r\}$
be the parallel translation of $\{E_1, E_2, E_3\}$ along $\gamma(t)$ at point $\Phi_r((p,q))$, it follows that $\{E_1^r, E_2^r, E_3^r\}$ forms an orthonormal basis of
$(T\mathbb{S}^2\times\mathbb{S}^2)\setminus\{\gamma'(r)\}$ at $\Phi_r((p,q))$.
%For any generic hypersurface $M$ and $r\in \mathbb{R}$, we define the map
%$\Phi_r: M\rightarrow \mathbb{S}^2\times\mathbb{S}^2$ by $\Phi_r((p,q))=\exp_{(p,q)}(rN_{(p,q)})$,
%where ${\rm exp}$ is the Riemannian exponential map of $\mathbb{S}^2\times\mathbb{S}^2$.

For a fixed $r$, $\Phi_r(M)$ is not necessarily a submanifold of $\mathbb{S}^2\times\mathbb{S}^2$,
but at least locally and for $r$ small enough, it is a hypersurface of $\mathbb{S}^2\times\mathbb{S}^2$.
$\Phi_r(M)$ is a hypersurface if and only if the tangent map $(\Phi_r((p,q)))_*$ is
non-degenerate for any $(p,q)\in M$. In this case, the corresponding frame $\{E_1^r, E_2^r, E_3^r\}$ forms an orthonormal basis of $T_{\Phi_r((p,q))}\Phi_r(M)$.

In the following, for any hypersurface $M$ and any $r\in \mathbb{R}$, we give some detailed geometric properties of the map $\Phi_r$ when $M$ has constant product angle function $C=0$, by applying the theory of Jacobi fields. More precisely,  we determine the tangent map of $\Phi_r$, and when  $\Phi_r(M)$
is a hypersurface,  we determine the shape operator of  $\Phi_r(M)$. There properties will be used in the proofs of Theorem  \ref{thm:4.6} and Theorem \ref{thm:1.1}.

\begin{proposition}\label{prop:4.3}
Let $M$ be a hypersurface of $\mathbb{S}^2\times\mathbb{S}^2$ with $C=0$,
and $\{E_1, E_2, E_3\}$ be the frame field on $M$ defined by \eqref{eqn:2.5}.
Then, for any $(p,q)\in M$, the tangent map of $\Phi_r$ has the following expression:
\begin{equation}\label{eqn:rank}
\left(
  \begin{array}{c}
    (\Phi_r)_*{E_1} \\
    (\Phi_r)_*{E_2} \\
    (\Phi_r)_*{E_3} \\
  \end{array}
\right)=
(B_{ij})\left(
  \begin{array}{c}
    E_1^r \\
    E_2^r \\
    E_3^r \\
  \end{array}
\right),
\end{equation}
where
\begin{equation}\label{Bij}
(B_{ij})=
\left(
\begin{array}{ccc}
\cos(\frac{r}{\sqrt{2}})-\sqrt{2}b_1\sin(\frac{r}{\sqrt{2}}) & -\sqrt{2}b_2\sin(\frac{r}{\sqrt{2}}) & 0 \\
-\sqrt{2}b_2\sin(\frac{r}{\sqrt{2}}) & \cos(\frac{r}{\sqrt{2}})
-\sqrt{2}b_4\sin(\frac{r}{\sqrt{2}}) & 0 \\
0 & 0 & 1 \\
\end{array}
\right).
\end{equation}
The determinant of $(B_{ij})$ is given by
\begin{equation}\label{eqn:det}
{\rm Det}(B_{ij})=\tfrac{1}{2}\Big(1-2b_2^2+2b_1b_4+(1+2b_2^2-2b_1b_4)\cos(\sqrt{2}r)
-\sqrt{2}(b_1+b_4)\sin(\sqrt{2}r)\Big).
\end{equation}

Furthermore, when $\Phi_r(M)$ is a hypersurface,  which is equivalent to ${\rm Det}(B_{ij})\neq 0$.
Let $A_r$ be the  shape operator of $\Phi_r(M)$, then
for any $(p,q)\in M$, the expression of  $A_r$ at $\Phi_r((p,q))$ is given by
\begin{equation}\label{eqn:Arr}
\left(
  \begin{array}{c}
    A_rE_1^r \\
    A_rE_2^r \\
    A_rE_3^r \\
  \end{array}
\right)=
\left(
\begin{array}{ccc}
(A_r)_{11}  &
(A_r)_{12}  & 0 \\
(A_r)_{21} &
(A_r)_{22} & 0 \\
0 & 0 & 0 \\
\end{array}
\right)
\left(
  \begin{array}{c}
    E_1^r \\
    E_2^r \\
    E_3^r \\
  \end{array}
\right),
\end{equation}
where
$$
\begin{aligned}
(A_r)_{11}&=\frac{\sqrt{2}(b_1-b_4+(b_1+b_4)\cos(\sqrt{2}r))
+(1+2b_2^2-2b_1b_4)\sin(\sqrt{2}r)}
{\sqrt{2}\Big(1-2b_2^2+2b_1b_4+(1+2b_2^2-2b_1b_4)\cos(\sqrt{2}r)
-\sqrt{2}(b_1+b_4)\sin(\sqrt{2}r)\Big)},\\
(A_r)_{12}&=(A_r)_{21}=\frac{2b_2}{1-2b_2^2+2b_1b_4+(1+2b_2^2-2b_1b_4)\cos(\sqrt{2}r)
-\sqrt{2}(b_1+b_4)\sin(\sqrt{2}r)},\\
(A_r)_{22}&=\frac{\sqrt{2}(-b_1+b_4+(b_1+b_4)\cos(\sqrt{2}r))
+(1+2b_2^2-2b_1b_4)\sin(\sqrt{2}r)}
{\sqrt{2}\Big(1-2b_2^2+2b_1b_4+(1+2b_2^2-2b_1b_4)\cos(\sqrt{2}r)
-\sqrt{2}(b_1+b_4)\sin(\sqrt{2}r)\Big)}.
\end{aligned}
$$

\end{proposition}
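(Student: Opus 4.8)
The plan is to compute the tangent map $(\Phi_r)_*$ and the shape operator $A_r$ via Jacobi field theory along the normal geodesics $\gamma(t)=\tilde{\exp}_{(p,q)}(tN)$ of $\mathbb{S}^2\times\mathbb{S}^2$. Fix $(p,q)\in M$ and fix one of the basis vectors $E_i\in T_{(p,q)}M$. I would consider a variation of geodesics $\alpha(s,t)=\tilde{\exp}_{c(s)}(tN(c(s)))$, where $c(s)$ is a curve in $M$ with $c(0)=(p,q)$, $c'(0)=E_i$; then $J_i(t):=\frac{\partial\alpha}{\partial s}(0,t)$ is the Jacobi field along $\gamma$ with initial data $J_i(0)=E_i$ and $J_i'(0)=\bar\nabla_{E_i}N=-AE_i$ (here $'$ denotes $\tfrac{D}{dt}$ along $\gamma$). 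By construction $(\Phi_r)_*E_i=J_i(r)$, so the whole computation reduces to solving the Jacobi equation.

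The key simplification is that the curvature operator $R_{\gamma'(t)}:=\bar R(\,\cdot\,,\gamma'(t))\gamma'(t)$ can be read off from the explicit curvature tensor $\bar R$ recorded in Section~\ref{sect:2.1}. Since $C=0$ along $M$, we have $g(PN,N)=0$, and using $P^2=\mathrm{Id}$ together with $P$ being $g$-symmetric one finds that $PN=\sqrt{2}\,\tfrac{E_1-E_2}{\sqrt{2}}\cdot(\text{appropriate normalization})$; more precisely from \eqref{eqn:2.6} with $C=0$ one reads $PN=E_3$ and $PE_3=N$, $PE_1=E_1$, $PE_2=-E_2$. Parallel translating along $\gamma$ (and using $\bar\nabla P=0$, so $P$ commutes with parallel transport and $PN=\gamma'$-type relations persist), the operator $R_{\gamma'}$ acts on the parallel frame $\{E_1^t,E_2^t,E_3^t\}$ with eigenvalues computed directly from $\bar R(E_j,N,N,E_j)$: one gets $R_{\gamma'}E_1^t=\tfrac12 E_1^t$, $R_{\gamma'}E_2^t=\tfrac12 E_2^t$, $R_{\gamma'}E_3^t=0$ (the $E_3$ direction is the $PN$-direction, which pairs with $N$ under $P$ and kills the curvature term). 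Hence in the parallel frame the Jacobi equation decouples into $f_1''+\tfrac12 f_1=0$, $f_2''+\tfrac12 f_2=0$, $f_3''=0$, solved by $\cos(t/\sqrt2),\sin(t/\sqrt2)$ in the first two slots and by $1,t$ in the last. Imposing $J_i(0)=E_i$, $J_i'(0)=-AE_i$ with $A$ in the form \eqref{eqn:2.25} (so $AE_1=b_1E_1+b_2E_2$, $AE_2=b_2E_1+b_4E_2$, $AE_3=0$) gives exactly the matrix $(B_{ij})$ in \eqref{Bij}: for instance the $E_3^r$ component stays $1$ because $AE_3=0$ forces $f_3\equiv\text{const}=1$, and the $\sin(r/\sqrt2)$ coefficients pick up the $-\sqrt2\,b_{\bullet}$ factors from $J_i'(0)=-AE_i$ (the $\sqrt2$ coming from $\tfrac{d}{dt}\sin(t/\sqrt2)|_0=\tfrac1{\sqrt2}$, inverted). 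The determinant formula \eqref{eqn:det} is then a routine expansion of the $2\times 2$ block using product-to-sum identities, $\cos^2(r/\sqrt2)=\tfrac12(1+\cos\sqrt2 r)$ etc.

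For the shape operator $A_r$ of $\Phi_r(M)$: the unit normal of $\Phi_r(M)$ at $\Phi_r((p,q))$ is $N^r=\gamma'(r)$ (the parallel transport of $N$), and by the Weingarten formula $A_r\big((\Phi_r)_*E_i\big)=-\bar\nabla_{(\Phi_r)_*E_i}N^r$; but $\bar\nabla_{(\Phi_r)_*E_i}N^r=\tfrac{D}{dt}\big(\gamma'(t)\big)$-type derivative $=J_i'(r)$ up to sign, since $N^r$ extends to the field of geodesic tangents and its covariant derivative along the Jacobi variation is again $J_i'$. Thus $A_r\big((\Phi_r)_*E_i\big)=-J_i'(r)$, and writing both $J_i(r)$ and $J_i'(r)$ in the parallel frame, $A_r$ is represented by $-(\text{matrix of }J_i')\cdot(B_{ij})^{-1}$. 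Inverting the $2\times 2$ block $(B_{ij})$ (nonzero determinant is exactly the condition that $\Phi_r(M)$ is a hypersurface) and simplifying with the same trigonometric identities yields \eqref{eqn:Arr} with the stated entries $(A_r)_{11},(A_r)_{12},(A_r)_{22}$; the third row and column vanish because $J_3(t)\equiv E_3^t$ is parallel, so $J_3'\equiv 0$.

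The genuinely delicate points, rather than conceptual obstacles, are: (i) correctly identifying the curvature eigenvalues of $R_{\gamma'}$ in the $E_3/PN$ direction — one must use that $P$ is parallel and $g$-symmetric so that $g(PE_3,N)=g(E_3,PN)$ and the two curvature terms in $\bar R$ involving $P$ cancel against the metric terms precisely along the $C=0$ locus; and (ii) bookkeeping the sign and the $\sqrt2$ normalizations when passing from $J_i'(0)=-AE_i$ to the off-diagonal $\sin$-coefficients, and again when inverting $(B_{ij})$ to extract $A_r$. Everything else is a direct ODE solution plus trigonometric simplification, so I would present the Jacobi-field setup and the decoupled ODEs in detail and then state that the matrix entries follow by a straightforward computation.
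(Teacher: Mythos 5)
Your proposal follows essentially the same route as the paper: the paper also realizes $(\Phi_r)_*E_i$ as the value $Y_i(r)$ of the $M$-Jacobi field with $Y_i(0)=E_i$, $Y_i'(0)=-AE_i$ along the normal geodesics, solves the decoupled Jacobi equation in the parallel frame $\{E_1^t,E_2^t,E_3^t\}$ adapted to $P$ (your curvature-eigenvalue computation $\tfrac12,\tfrac12,0$ is exactly what makes the paper's explicit solutions valid), and obtains $A_r$ from $A_rY_i(r)=-Y_i'(r)$ combined with the inverse of $(B_{ij})$. Your ordering $D\,B^{-1}$ versus the paper's $B^{-1}D$ is harmless since both matrices are polynomials in the $2\times2$ block of $A$ and hence commute, so the proposal is correct and matches the paper's proof.
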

\begin{proof}
We choose the orthonormal frame fields $\{E_1,E_2,E_3\}$ defined by \eqref{eqn:2.5}
and assume that \eqref{eqn:2.7} holds. Since $M$ has constant $C=0$, it follows that $b_3=b_5=b_6=0$.

Let $N=(N_1,N_2)$ be the unit normal field to $M$, then for any given $r\in \mathbb{R}$,
we define a map $\Phi_r:M\rightarrow\mathbb{S}^2\times\mathbb{S}^2$ by
$$
\Phi_r( (p,q))=\tilde{\exp}_{(p,q)}(rN_{(p,q)})=({\rm exp}_prN_1,{\rm exp}_qrN_2), \ \ (p,q)\in M,
$$
where exp denotes the exponential map in $\mathbb{S}^2$.
%Notice that, $\Phi_r(M)$ is a parallel hypersurface of $M$ if and only if
%${\rm Rank}((\Phi_r)_*)=3$ on $M$. For a general hypersurface $M$,
%$\Phi_r(M)$ may be not a hypersurface. However, when $M$ has constant sectional curvature,
%we will prove that ${\rm Rank}((\Phi_r)_*)=3$ holds for any given $r$.
As $C=0$, we have $|N_1|=|N_2|=\tfrac{1}{\sqrt{2}}$, then
$$
\Phi_r(M)=\big(\cos(\tfrac{r}{\sqrt{2}})p+\sqrt{2}\sin(\tfrac{r}{\sqrt{2}})N_1,
\cos(\tfrac{r}{\sqrt{2}})q+\sqrt{2}\sin(\tfrac{r}{\sqrt{2}})N_2\big), \ \ (p,q)\in M.
$$
For any $(p,q)\in M$, let $\gamma(t)$ be the geodesic such that $\gamma(0)=(p,q)$
and $\gamma'(0)=N_{(p,q)}$. Along $\gamma(t)$, by using $\bar{\nabla}P=0$ and
$\bar{\nabla}_{\gamma'(t)} {\gamma'(t)}=0$, we have
$$
\frac{d}{d t}g(P\gamma'(t),\gamma'(t))=g(\bar{\nabla}_{\gamma'(t)} P\gamma'(t),\gamma'(t))=0.
$$
It follows that
\begin{equation}\label{eqn:CC}
g(P\gamma'(r),\gamma'(r))=g(P\gamma'(0),\gamma'(0))=C=0.
\end{equation}

Let
\begin{equation}\label{eqn:EE}
E_1^t=\frac{J_1\gamma'(t)+J_2\gamma'(t)}{\sqrt{2}}, \
E_2^t=\frac{J_1\gamma'(t)-J_2\gamma'(t)}{\sqrt{2}}, \ E_3^t=P\gamma'(t),
\end{equation}
then $\{E_1^t,E_2^t,E_3^t\}$ forms three orthonormal parallel vector fields along $\gamma(t)$,
and $E_1^0=E_1,\ E_2^0=E_2,\ E_3^0=E_3$. Furthermore, along $\gamma(t)$, it holds that
\begin{equation}\label{eqn:PP}
P E_1^t=E_1^t,\ \ P E_2^t=- E_2^t,\ \ P E_3^t=\gamma'(t),\ \ P\gamma'(t)=E_3^t.
\end{equation}
Recall that as described in Section 10.2.1 of \cite{B-C-O}, a Jacobi field $Y(t)$ along $\gamma(t)$ is called an $M$-Jacobi field if its initial values satisfy the following two conditions:
$$
Y(0)\in T_{\gamma(0)}M \ \ {\rm and} \ \ Y'(0)+AY(0)\in T^\bot_{\gamma(0)}M.
$$
We define $\mathfrak{J}(M,\gamma(t))$ as the $3$-dimensional vector
space consisting of all $M$-Jacobi vector fields along $\gamma(t)$ which are perpendicular
to the $M$-Jacobi vector field $t\rightarrow t\gamma'(t)$. %Here, for the definition of
%$M$-Jacobi vector fields, we refer to section 10.2.1 of \cite{B-C-O}.
Then, at $\gamma(r)\in\Phi_r(M)$, the tangent space of $\Phi_r(M)$ is obtained by
$$
T_{\gamma(r)}\Phi_r(M)={\rm Span}\{Y(r)|\ Y(t)\in\mathfrak{J}(M,\gamma(t))\}.
$$
Let $Y_i(t)$, $i=1,2,3$, be the $M$-Jacobi fields along the geodesic $\gamma(t)$
such that $Y_i(0)=E_i$ and $Y_i'(0)=-AE_i$. Then, by direct calculations, we obtain that
$$
\begin{aligned}
Y_i(t)&=\Big(\delta_{1i}\cos(\frac{t}{\sqrt{2}})-\sqrt{2}g(AE_1,E_i)\sin(\frac{t}{\sqrt{2}})\Big)E_1^t
+\Big(\delta_{2i}\cos(\frac{t}{\sqrt{2}})-\sqrt{2}g(AE_2,E_i)\sin(\frac{t}{\sqrt{2}})\Big)E_2^t\\
&+(\delta_{3i}-tg(AE_3,E_i))E_3^t.
\end{aligned}
$$
Taking $t=r$, we obtain that
\begin{equation}\label{tangentmap}
\left(
  \begin{array}{c}
    (\Phi_r)_*{E_1} \\
    (\Phi_r)_*{E_2} \\
    (\Phi_r)_*{E_3} \\
  \end{array}
\right)=
\left(
  \begin{array}{c}
    Y_1(r) \\
    Y_2(r) \\
    Y_3(r) \\
  \end{array}
\right)=
(B_{ij})\left(
  \begin{array}{c}
    E_1^r \\
    E_2^r \\
    E_3^r \\
  \end{array}
\right),
\end{equation}
where
$$
(B_{ij})=
\left(
\begin{array}{ccc}
\cos(\frac{r}{\sqrt{2}})-\sqrt{2}b_1\sin(\frac{r}{\sqrt{2}}) & -\sqrt{2}b_2\sin(\frac{r}{\sqrt{2}}) & 0 \\
-\sqrt{2}b_2\sin(\frac{r}{\sqrt{2}}) & \cos(\frac{r}{\sqrt{2}})
-\sqrt{2}b_4\sin(\frac{r}{\sqrt{2}}) & 0 \\
0 & 0 & 1 \\
\end{array}
\right).
$$
By a direct calculation, we have
$$
{\rm Det}(B_{ij})=\tfrac{1}{2}\Big(1-2b_2^2+2b_1b_4+(1+2b_2^2-2b_1b_4)\cos(\sqrt{2}r)
-\sqrt{2}(b_1+b_4)\sin(\sqrt{2}r)\Big).
$$
%Due that $b_1b_4-b_2^2=\tfrac{1}{2}$ holds on $M$, we further get
%${\rm Det}(B_{ij})=1-\frac{1}{\sqrt{2}}(b_1+b_4)\sin{\sqrt{2}r}$.
%If there is some $r$ such that ${\rm Det}(B_{ij})=0$ in local, then $M$ has constant mean curvature,
%which contradicts Remark \ref{rem:4.1aa}.
%Since our concern is only local, we can assume that ${\rm Det}(B_{ij})\neq0$ for any $(p,q)\in M$
%and any fixed $r$. It follows that $\{E_1^r,E_2^r,E_3^r\}$ forms the local orthonormal frame fields
%on $\Phi_r(M)$, which exactly implies that ${\rm Rank}((\Phi_r)_*)=3$, so $\Phi_r(M)$ is a
%parallel hypersurface of $M$ at distance $r$.

In the following, we assume that $\Phi_r(M)$ is a parallel hypersurface of $M$ at distance $r$.
Then, for any $(p,q)\in M$, the corresponding $\{E_1^r,E_2^r,E_3^r\}$ forms an orthonormal basis
of $T_{\Phi_r((p,q))}\Phi_r(M)$.
It is straightforward to check that $N^r=(N_1^r,N_2^r)$ defined by
$$
N_1^r=\cos(\tfrac{r}{\sqrt{2}})N_1-\tfrac{1}{\sqrt{2}}\sin(\tfrac{r}{\sqrt{2}})p, \ \
N_2^r=\cos(\tfrac{r}{\sqrt{2}})N_2-\tfrac{1}{\sqrt{2}}\sin(\tfrac{r}{\sqrt{2}})q, \ (p,q)\in M,
$$
is a unit normal vector field to the hypersurface $\Phi_r(M)$.
By \eqref{eqn:CC}, we know that $\Phi_r(M)$  also has constant product angle function $C=0$.
Next, we calculate the derivatives of $Y_i(t),i=1,2,3,$ with respect to $t$ and  obtain that
\begin{equation}\label{eqn:AY}
\left(
  \begin{array}{c}
    A_rY_1(r) \\
    A_rY_2(r) \\
    A_rY_3(r) \\
  \end{array}
\right)=
\left(
  \begin{array}{c}
    -Y_1'(r) \\
    -Y_2'(r) \\
    -Y_3'(r) \\
  \end{array}
\right)
=
(D_{ij})
\left(
  \begin{array}{c}
    E_1^r \\
    E_2^r \\
    E_3^r \\
  \end{array}
\right),
\end{equation}
where $A_r$ is the shape operator of hypersurface $\Phi_r(M)$ with respect to $N^r$ and
$$
(D_{ij})=
\left(
\begin{array}{ccc}
b_1\cos(\frac{r}{\sqrt{2}})+\frac{1}{\sqrt{2}}\sin(\frac{r}{\sqrt{2}}) &
b_2\cos(\frac{r}{\sqrt{2}}) & 0 \\
b_2\cos(\frac{r}{\sqrt{2}}) & b_4\cos(\frac{r}{\sqrt{2}})
+\frac{1}{\sqrt{2}}\sin(\frac{r}{\sqrt{2}}) & 0 \\
0 & 0 & 0 \\
\end{array}
\right).
$$
Finally, using \eqref{tangentmap} and \eqref{eqn:AY}, we obtain that
\begin{equation*}%\label{eqn:Arr}
	\left(
	\begin{array}{c}
		A_rE_1^r \\
		A_rE_2^r \\
		A_rE_3^r \\
	\end{array}
	\right)=
	(B_{ij})^{-1}(D_{ij})
	\left(
	\begin{array}{c}
		E_1^r \\
		E_2^r \\
		E_3^r \\
	\end{array}
	\right).
\end{equation*}
Then \eqref{eqn:Arr} follows immediately and we have completed the proof of the Proposition \ref{prop:4.3}.
%$$
%\left(
%  \begin{array}{c}
%    A_rE_1^r \\
%    A_rE_2^r \\
%    A_rE_3^r \\
%  \end{array}
%\right)=
%\left(
%\begin{array}{ccc}
% \frac{-b_1+b_4-(b_1+b_4)\cos(\sqrt{2}r)}{-2+\sqrt{2}(b_1+b_4)\sin(\sqrt{2}r)} &
% \frac{-2b_2}{-2+\sqrt{2}(b_1+b_4)\sin(\sqrt{2}r)} & 0 \\
%\frac{-2b_2}{-2+\sqrt{2}(b_1+b_4)\sin(\sqrt{2}r)} &
%\frac{b_1-b_4-(b_1+b_4)\cos(\sqrt{2}r)}{-2+\sqrt{2}(b_1+b_4)\sin(\sqrt{2}r)} & 0 \\
%0 & 0 & 0 \\
%\end{array}
%\right)
%\left(
%  \begin{array}{c}
%    E_1^r \\
%    E_2^r \\
%    E_3^r \\
%  \end{array}
%\right).
%$$
%It follows that the mean curvature $H(r)$ of parallel hypersurface $\Phi_r(M)$ is given by
%$$
%H(r)=\frac{2(b_1+b_4)\cos(\sqrt{2}r)}{6-3\sqrt{2}(b_1+b_4)\sin(\sqrt{2}r)}.
%$$
%Thus, if we take $r=\frac{\pi}{2\sqrt{2}}$, then $H(\frac{\pi}{2\sqrt{2}})=0$.
%It means that parallel hypersurface $\Phi_{\frac{\pi}{2\sqrt{2}}}M$
%is a minimal hypersurface in $\mathbb{S}^2\times\mathbb{S}^2$ with constant $C=0$.
%
\end{proof}

In the following, by using Proposition \ref{prop:4.3}, we establish the relationship between hypersurfaces with constant sectional curvature and minimal hypersurfaces with $C=0$, which is one of the key observations in the proof of Theorem \ref{thm:1.1}.

\begin{theorem}\label{thm:4.6}
Let $M$ be a hypersurface of $\mathbb{S}^2\times\mathbb{S}^2$ with constant sectional curvature $\kappa$.
Then, the parallel hypersurface $\Phi_{\frac{\pi}{2\sqrt{2}}}(M)$
of $M$ at distance $\frac{\pi}{2\sqrt{2}}$,
is a minimal hypersurface in $\mathbb{S}^2\times\mathbb{S}^2$ with constant $C=0$.
\end{theorem}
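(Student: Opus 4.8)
The strategy is to combine the rigidity from Proposition~\ref{prop:4.5} with the explicit parallel--hypersurface formulas of Proposition~\ref{prop:4.3}, evaluated at the distinguished distance $r=\tfrac{\pi}{2\sqrt2}$, at which $\cos(\sqrt2\,r)=0$ and $\sin(\sqrt2\,r)=1$. First, by Proposition~\ref{prop:4.5} we may assume $\kappa=\tfrac12$ and $C\equiv0$; its proof also records, in the canonical frame $\{E_1,E_2,E_3\}$ of \eqref{eqn:2.5}, that $b_3=b_5=b_6=0$ and that the Gauss relation $b_1b_4-b_2^2=\tfrac12$ holds on $M$. The decisive consequence of this last identity is the pair of equalities
\[
1+2b_2^2-2b_1b_4=0,\qquad 1-2b_2^2+2b_1b_4=2,
\]
which are exactly the combinations occurring in \eqref{Bij}, \eqref{eqn:det} and \eqref{eqn:Arr}, so that the rational expressions of Proposition~\ref{prop:4.3} collapse at $r=\tfrac{\pi}{2\sqrt2}$.

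Granting for the moment that $\Phi_{\frac{\pi}{2\sqrt2}}(M)$ is a hypersurface, its geometry is then read off by a one--line substitution. From \eqref{eqn:det} and the identities above, ${\rm Det}(B_{ij})=\tfrac12\bigl(2-\sqrt2(b_1+b_4)\bigr)=1-\tfrac1{\sqrt2}(b_1+b_4)=1-\tfrac1{\sqrt2}\,{\rm Tr}\,A$, using $b_6=0$. Its shape operator $A_r$ is given by \eqref{eqn:Arr}; one has $A_rE_3^r=0$ and
\[
(A_r)_{11}+(A_r)_{22}=\frac{2\sqrt2\,(b_1+b_4)\cos(\sqrt2\,r)+2\,(1+2b_2^2-2b_1b_4)\sin(\sqrt2\,r)}{2\sqrt2\,{\rm Det}(B_{ij})},
\]
the numerator of which vanishes at $r=\tfrac{\pi}{2\sqrt2}$: the first summand because $\cos\tfrac\pi2=0$, the second because $1+2b_2^2-2b_1b_4=0$. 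Hence ${\rm Tr}\,A_r=0$, i.e.\ $\Phi_{\frac{\pi}{2\sqrt2}}(M)$ is minimal. That its product angle function vanishes is immediate from $\bar{\nabla}P=0$: along the unit--speed normal geodesic $\gamma$ the function $g(P\gamma',\gamma')$ is constant, hence equal to $C=0$, and $\gamma'(\tfrac{\pi}{2\sqrt2})$ is a unit normal field to $\Phi_{\frac{\pi}{2\sqrt2}}(M)$ --- this is precisely the computation already carried out in the proof of Proposition~\ref{prop:4.3}.

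The substantive point, and the main obstacle, is to verify that $\Phi_{\frac{\pi}{2\sqrt2}}(M)$ really is a hypersurface, i.e.\ that ${\rm Det}(B_{ij})=1-\tfrac1{\sqrt2}(b_1+b_4)\ne0$, equivalently $b_1+b_4={\rm Tr}\,A\ne\sqrt2$. I would obtain this from an a priori bound on ${\rm Tr}\,A$ by integrating the Codazzi equations along the integral curves of $E_3$. With $C=0$, equations \eqref{eqn:2.26}, \eqref{eqn:2.27}, \eqref{eqn:2.28} read $E_3b_1=\tfrac12+b_1^2-b_2^2$, $E_3b_2=b_2(b_1-b_4)$, $E_3b_4=-\tfrac12+b_2^2-b_4^2$; combining these with $b_1b_4-b_2^2=\tfrac12$ and setting $S=b_1+b_4$, $D=b_1-b_4$ gives the decoupled system $E_3D=2+D^2$, $E_3S=SD$. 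Choosing the parameter $t$ along $E_3$ with $E_3t=1$, this integrates to $D=\sqrt2\tan\!\bigl(\sqrt2(t-t_0)\bigr)$ and $S=c\sec\!\bigl(\sqrt2(t-t_0)\bigr)$ along each such curve, and then $b_2^2=b_1b_4-\tfrac12=\tfrac14(c^2-2)\sec^2\!\bigl(\sqrt2(t-t_0)\bigr)$. Positivity of $b_2^2$ forces $c^2\ge2$, whence $|{\rm Tr}\,A|=|S|\ge|c|\ge\sqrt2$ everywhere (in particular $M$ is never minimal), with equality $S=\sqrt2$ only where $c^2=2$, i.e.\ only where $b_2\equiv0$.

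Therefore ${\rm Det}(B_{ij})\ne0$ on the open dense subset $\{{\rm Tr}\,A\ne\sqrt2\}$ (and everywhere once the orientation of $N$ is fixed so that ${\rm Tr}\,A\le-\sqrt2$ on the connected piece under consideration), where $\Phi_{\frac{\pi}{2\sqrt2}}(M)$ is an immersed minimal hypersurface with $C\equiv0$, which proves the theorem. In the remaining borderline situation $b_2\equiv0$ one bypasses the parallel construction altogether: by Proposition~\ref{prop:3.61}, $M$ is then an open part of some $\hat M_{a,b}$, whose parallel hypersurface at distance $\tfrac{\pi}{2\sqrt2}$ is the minimal hypersurface $M_{a,b}$ with $C=0$.
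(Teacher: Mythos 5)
Your argument is correct in substance and, for most steps, coincides with the paper's proof: Proposition \ref{prop:4.5} gives $\kappa=\tfrac12$, $C=0$, $b_3=b_5=b_6=0$ and the Gauss identity $b_1b_4-b_2^2=\tfrac12$; the formulas of Proposition \ref{prop:4.3} then show that the trace of $A_r$ vanishes at $r=\tfrac{\pi}{2\sqrt2}$ (the paper phrases this via $H(r)=\tfrac{2(b_1+b_4)\cos(\sqrt2 r)}{6-3\sqrt2(b_1+b_4)\sin(\sqrt2 r)}$), and $C=0$ for the parallel hypersurface follows from the constancy of $g(P\gamma',\gamma')$ along normal geodesics, exactly as in the paper. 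Where you genuinely diverge is the non-degeneracy of $\Phi_{\frac{\pi}{2\sqrt2}}$. The paper simplifies ${\rm Det}(B_{ij})=1-\tfrac1{\sqrt2}(b_1+b_4)\sin(\sqrt2 r)$ and argues that its vanishing on an open set would make $b_1+b_4$ constant, i.e.\ $M$ would have constant mean curvature, contradicting Remark \ref{rem:4.1aaaa}\,--type nonexistence (Remark 4.1, which rests on Corollary 1(3) of \cite{Ur}); you instead decouple the Codazzi system along $E_3$ into $E_3D=2+D^2$, $E_3S=SD$ (using the Gauss identity), integrate to $D=\sqrt2\tan(\sqrt2(t-t_0))$, $S=c\sec(\sqrt2(t-t_0))$, and deduce the pointwise bound $|{\rm Tr}\,A|\ge\sqrt2$ from $b_2^2\ge0$. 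This is a more self-contained and quantitative route: it avoids the appeal to Urbano's classification for this step and shows in passing that $M$ is never minimal. Two loose ends, neither fatal: (i) the density of $\{{\rm Tr}\,A\ne\sqrt2\}$ is asserted rather than proved, though it follows at once from your own formulas ($S=c\sec(\sqrt2(t-t_0))$ with $|c|\ge\sqrt2$ is non-constant along every $E_3$-curve, so $\{S=\sqrt2\}$ has empty interior), and note that flipping $N$ so that ${\rm Tr}\,A\le-\sqrt2$ replaces $\Phi_{\frac{\pi}{2\sqrt2}}$ by the parallel hypersurface on the opposite side, so that remark does not literally handle the map in the statement for both orientations; (ii) the final fallback is both unnecessary and not justified as written: Proposition \ref{prop:3.61} with $b_2=0$ only yields the two-curve family \eqref{eqn:6.1}, and identifying $M$ with $\hat M_{a,b}$ requires in addition using $b_1b_4=\tfrac12$ to force the curves to be appropriate circles; in any case your main argument already covers that situation on the open dense set $\{{\rm Tr}\,A\ne\sqrt2\}$, which is the same local reduction the paper itself makes.
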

\begin{proof}
Since $M$  has  constant sectional curvature $\kappa$, by Proposition \ref{prop:4.5}, $M$ has constant $C=0$ and $\kappa=\tfrac{1}{2}$.
We choose the orthonormal frame fields $\{E_1,E_2,E_3\}$ defined by \eqref{eqn:2.5}
and assume that \eqref{eqn:2.7} holds.
It follows from the proof of Proposition \ref{prop:4.5} that $b_3=b_5=b_6=0$ and $b_1b_4-b_2^2=\frac{1}{2}$.

Notice that, for any $r\in \mathbb{R}$,
$\Phi_r(M)$ is a parallel hypersurface of $M$ if and only if
${\rm Rank}((\Phi_r)_*)=3$ on $M$. For a general hypersurface $M$,
$\Phi_r(M)$ may not always be a hypersurface. However, when $M$ has constant sectional curvature,
we will prove that ${\rm Rank}((\Phi_r)_*)=3$ holds on $M$ for any $r$.

In fact, by Proposition \ref{prop:4.3}, we know that, for any $(p,q)\in M$, the tangent map of $\Phi_r$ satisfies the expression \eqref{eqn:rank}. The determinant of $(B_{ij})$ is given by \eqref{eqn:det}.
Since $b_1b_4-b_2^2=\tfrac{1}{2}$, we can simplify ${\rm Det}(B_{ij})$ and obtain that
$${\rm Det}(B_{ij})=1-\frac{1}{\sqrt{2}}(b_1+b_4)\sin{(\sqrt{2}r)}.$$
If there exists some $r$ such that ${\rm Det}(B_{ij})=0$ in local, then $M$ has constant mean curvature,
which contradicts Remark \ref{rem:4.1aa}.
Since our concern is  local, we may assume that ${\rm Det}(B_{ij})\neq0$ for any $(p,q)\in M$
and any fixed $r$. It follows that  ${\rm Rank}((\Phi_r)_*)=3$ and  $\Phi_r(M)$ is a
parallel hypersurface of $M$ at distance $r$.

In the following, we still use $A_r$ to denote the shape operator of $\Phi_r(M)$.
By using Proposition \ref{prop:4.3} again, we have that,
for any $(p,q)\in M$, the expression of $A_r$ at $\Phi_r((p,q))$ is given by \eqref{eqn:Arr}.
Then, by using $b_1b_4-b_2^2=\frac{1}{2}$, we have
$$
\left(
  \begin{array}{c}
    A_rE_1^r \\
    A_rE_2^r \\
    A_rE_3^r \\
  \end{array}
\right)=
\left(
\begin{array}{ccc}
 \frac{-b_1+b_4-(b_1+b_4)\cos(\sqrt{2}r)}{-2+\sqrt{2}(b_1+b_4)\sin(\sqrt{2}r)} &
 \frac{-2b_2}{-2+\sqrt{2}(b_1+b_4)\sin(\sqrt{2}r)} & 0 \\
\frac{-2b_2}{-2+\sqrt{2}(b_1+b_4)\sin(\sqrt{2}r)} &
\frac{b_1-b_4-(b_1+b_4)\cos(\sqrt{2}r)}{-2+\sqrt{2}(b_1+b_4)\sin(\sqrt{2}r)} & 0 \\
0 & 0 & 0 \\
\end{array}
\right)
\left(
  \begin{array}{c}
    E_1^r \\
    E_2^r \\
    E_3^r \\
  \end{array}
\right).
$$
It follows that the mean curvature $H(r)$ of the parallel hypersurface $\Phi_r(M)$ is given by
$$
H(r)=\frac{2(b_1+b_4)\cos(\sqrt{2}r)}{6-3\sqrt{2}(b_1+b_4)\sin(\sqrt{2}r)}.
$$
Thus, if we take $r=\frac{\pi}{2\sqrt{2}}$, then $H(\frac{\pi}{2\sqrt{2}})=0$.
This means that parallel hypersurface $\Phi_{\frac{\pi}{2\sqrt{2}}}M$
is a minimal hypersurface in $\mathbb{S}^2\times\mathbb{S}^2$ with constant $C=0$.
\end{proof}

Finally, we apply Theorem \ref{thm:3.2aa}, Proposition \ref{prop:4.5}, Proposition \ref{prop:4.3} and Theorem \ref{thm:4.6}
to complete the proof of Theorem \ref{thm:1.1}.

%classify the hypersurface of $\mathbb{S}^2\times \mathbb{S}^2$ with constant sectional curvature.

%\begin{theorem}\label{thm:4.7}
%Let $M$ be a hypersurface of $\mathbb{S}^2\times \mathbb{S}^2$ with constant sectional curvature.
%Then, either
%
%\begin{enumerate}
%\item[(1)]
%$M$ is an open part of $\hat{M}_{a,b}=\{(p,q)\in \mathbb{S}^2\times \mathbb{S}^2|\ \langle p,a\rangle^2+\langle q,b\rangle^2=1\}$ for some constant vectors $a,b\in\mathbb{S}^2$;

%\item[(2)]
%or $M$ is a parallel hypersurface of $\tilde{M}$ at distance $\frac{\pi}{2\sqrt{2}}$,
%in which $\tilde{M}$ is a minimal hypersurface in $\mathbb{S}^2\times\mathbb{S}^2$ with
%$C=0$ described by Theorem \ref{thm:3.1}.
%\end{enumerate}
%\end{theorem}
%
%\begin{proof}

\vskip 0.2 cm
\noindent{\bf Completion of the proof of Theorem \ref{thm:1.1}}.
%\vskip 0.2 cm

Since $M$  has  constant sectional curvature $\kappa$, from Proposition \ref{prop:4.5}, we know that $M$ has constant $C=0$
and the sectional curvature $\kappa=\tfrac{1}{2}$.
According to Theorem \ref{thm:4.6}, $M$ is a parallel hypersurface of a minimal hypersurface
$\tilde{M}$ at distance $\frac{\pi}{2\sqrt{2}}$,
in which  $\tilde{M}$ also
has constant $C=0$ and such $\tilde{M}$ has been classified in Theorem \ref{thm:3.2aa}.
However, conversely, the parallel hypersurface of the hypersurface in
Theorem \ref{thm:3.2aa} may not always has constant sectional curvature.
In the following, we will check the three cases of Theorem \ref{thm:3.2aa},
and find out in which case the parallel hypersurface has constant sectional curvature.

%\vskip 1cm
{\bf Case (1)}: $\tilde{M}$ is an open part of $M_0$.

In this case, we point out that $M_t$, $t\in(-1,1)$, themselves are a family
of parallel hypersurfaces, and their focal submanifold is the diagonal surface
$\Sigma=\{(p,p)\in \mathbb{S}^2\times \mathbb{S}^2|\ p\in \mathbb{S}^2\}$.
It can be checked  directly that $M_t$ does not have constant
sectional curvature for any $t\in(-1,1)$.

Hence,  Case (1) can not occur.

%\vskip 1cm
{\bf Case (2)}: $\tilde{M}$ is an open part of $M_{\tilde{a},\tilde{b}}=\{(\tilde{p},\tilde{q})\in
\mathbb{S}^2\times \mathbb{S}^2|\ \langle \tilde{p},\tilde{a}\rangle+\langle \tilde{q},\tilde{b}\rangle=0\}$
for some $\tilde{a},\tilde{b}\in\mathbb{S}^2$.

In this case, up to an isometry of $\mathbb{S}^2\times\mathbb{S}^2$,
we assume that $\tilde{a}=(0,0,1), \tilde{b}=(0,0,-1)$.
Then, $\tilde{M}$ is locally given by the immersion
$\tilde{\Phi}:\mathbb{R}^3\rightarrow \mathbb{S}^2\times \mathbb{S}^2,$
$$
\tilde{\Phi}(t_1,t_2,t_3)=\big(\tilde{p}(t_1,t_2),\tilde{q}(t_1,t_3)\big)=\cos(\tfrac{t_1}{\sqrt{2}})
\big((\cos t_2,\sin t_2,0),(\cos t_3,\sin t_3,0)\big)
+\sin(\tfrac{t_1}{\sqrt{2}})(\tilde{a},-\tilde{b}),
$$
with unit normal field  given by
$$
N=(N_1,N_2)=-\tfrac{1}{\sqrt{2}}\sin(\tfrac{t_1}{\sqrt{2}})
\big((\cos t_2,\sin t_2,0),-(\cos t_3,\sin t_3,0)\big)
+\tfrac{1}{\sqrt{2}}\cos(\tfrac{t_1}{\sqrt{2}})(\tilde{a},\tilde{b}).
$$
We define the map
$\tilde{\Phi}_{\frac{\pi}{2\sqrt{2}}}:\tilde{M}\rightarrow \mathbb{S}^2\times \mathbb{S}^2$ by
$$
\begin{aligned}
\tilde{\Phi}_{\tfrac{\pi}{2\sqrt{2}}}((\tilde{p},\tilde{q}))%=(p_{\frac{\pi}{2\sqrt{2}}}(t,r,s),q_{\frac{\pi}{2\sqrt{2}}}(t,r,s)
=&\big(\cos(\tfrac{\pi}{4})\tilde{p}(t_1,t_2)+\sqrt{2}\sin(\tfrac{\pi}{4})N_1,
\cos(\tfrac{\pi}{4})\tilde{q}(t_1,t_3)+\sqrt{2}\sin(\tfrac{\pi}{4})N_2\big)\\
=&\big(\tfrac{\cos(\tfrac{t_1}{\sqrt{2}})-\sin(\tfrac{t_1}{\sqrt{2}})}{\sqrt{2}}(\cos t_2,\sin t_2,0)
+\tfrac{\cos(\tfrac{t_1}{\sqrt{2}})+\sin(\tfrac{t_1}{\sqrt{2}})}{\sqrt{2}}\tilde{a},\\
&\tfrac{\cos(\tfrac{t_1}{\sqrt{2}})+\sin(\tfrac{t_1}{\sqrt{2}})}{\sqrt{2}}(\cos t_3,\sin t_3,0)
+\tfrac{\cos(\tfrac{t_1}{\sqrt{2}})-\sin(\tfrac{t_1}{\sqrt{2}})}{\sqrt{2}}\tilde{b}\big),~~ (\tilde{p},\tilde{q})\in\tilde{M}.
\end{aligned}
$$
It can be checked directly that $M=\tilde{\Phi}_{\frac{\pi}{2\sqrt{2}}}(\tilde{M})$ is a parallel hypersurface
of $\tilde{M}$ at distance $\frac{\pi}{2\sqrt{2}}$, and $M$ satisfies that
$\langle p,\tilde{a}\rangle^2+\langle q,\tilde{b}\rangle^2=1$
for any $(p,q)=\tilde{\Phi}_{\tfrac{\pi}{2\sqrt{2}}}((\tilde{p},\tilde{q}))\in M$.
Furthermore, $M$ has constant sectional curvature $\frac{1}{2}$ and $C=0$.

Hence, in Case (2),
$M$ is locally an open part of $\hat{M}_{a,b}$ for some $a,b\in\mathbb{S}^2$.

%\vskip 1cm
{\bf Case (3)}: $\tilde{M}$ is a minimal hypersurface described by Theorem \ref{thm:3.1}.

In this case, we still choose the orthonormal frame fields $\{E_1,E_2,E_3\}$  defined by \eqref{eqn:2.5} on $\tilde{M}$
 and assume that \eqref{eqn:2.7} holds, then $b_3=b_5=b_6=0$ and $H=\frac{b_1+b_4}{3}=0$ on $\tilde{M}$.

We adapt the notations in  Proposition \ref{prop:4.3}.
By Proposition \ref{prop:4.3}, we know that, for any $(p,q)\in \tilde{M}$, the tangent map of $\Phi_r$ satisfies the expression \eqref{eqn:rank}. The determinant of $(B_{ij})$ is given by \eqref{eqn:det}.
$H=0$ implies that $b_4=-b_1$, we can simplify ${\rm Det}(B_{ij})$ and obtain that
$${\rm Det}(B_{ij})=\frac{1}{2}\big(1-2b_1^2-2b_2^2+(1+2b_1^2+2b_2^2)\cos(\sqrt{2}r)\big).$$
If there exists some $r$ such that ${\rm Det}(B_{ij})=0$ in local,
then $\tilde{M}$ has constant scalar curvature, which contradicts Remark \ref{rem:3.1aa}.
Since our concern is local, we may assume that ${\rm Det}(B_{ij})\neq0$
for any $(p,q)\in \tilde{M}$ and any fixed $r$. It follows that
${\rm Rank}((\Phi_r)_*)=3$ and $\Phi_r(\tilde{M})$ is a parallel hypersurface of $\tilde{M}$ at distance $r$.

We still use  $A_r$ to denote  the shape operator of $\Phi_r(\tilde{M})$.
By applying Proposition \ref{prop:4.3} again, we have that,
for any $(p,q)\in \tilde{M}$, the expression of $A_r$ at $\Phi_r((p,q))$ is given by \eqref{eqn:Arr}.
Then, by using $b_4=-b_1$, we have
$$
\left(
  \begin{array}{c}
    A_rE_1^r \\
    A_rE_2^r \\
    A_rE_3^r \\
  \end{array}
\right)=
\left(
\begin{array}{ccc}
\frac{4b_1+\sqrt{2}(1+2b_1^2+2b_2^2)\sin(\sqrt{2}r)}
{2(1-2b_1^2-2b_2^2+(1+2b_1^2+2b_2^2)\cos(\sqrt{2}r))} &
\frac{2b_2}{1-2b_1^2-2b_2^2+(1+2b_1^2+2b_2^2)\cos(\sqrt{2}r)} & 0 \\
\frac{2b_2}{1-2b_1^2-2b_2^2+(1+2b_1^2+2b_2^2)\cos(\sqrt{2}r)} &
\frac{-4b_1+\sqrt{2}(1+2b_1^2+2b_2^2)\sin(\sqrt{2}r)}
{2(1-2b_1^2-2b_2^2+(1+2b_1^2+2b_2^2)\cos(\sqrt{2}r))} & 0 \\
0 & 0 & 0 \\
\end{array}
\right)
\left(
  \begin{array}{c}
    E_1^r \\
    E_2^r \\
    E_3^r \\
  \end{array}
\right).
$$
Armed with \eqref{eqn:2.8}, by direct calculations, we obtain that the Ricci curvature tensor of the hypersurface
$M=\Phi_{r}(\tilde{M})$ satisfies that

$$
\begin{aligned}
{\rm Ric}(E_1^r)&=\frac{-1+2b_1^2+2b_2^2}{(-1+2b_1^2+2b_2^2)-(1+2b_1^2+2b_2^2)\cos(\sqrt{2}r)}
E_1^r,\\
{\rm Ric}(E_2^r)&=\frac{-1+2b_1^2+2b_2^2}{(-1+2b_1^2+2b_2^2)-(1+2b_1^2+2b_2^2)\cos(\sqrt{2}r)}
E_2^r, \\
{\rm Ric}(E_3^r)&=E_3^r.
\end{aligned}
$$
If we take $r=\frac{\pi}{2\sqrt{2}}$, then
$
{\rm Ric}(E_1^r)=E_1^r,~{\rm Ric}(E_2^r)=E_2^r,~{\rm Ric}(E_3^r)=E_3^r.
$
Since  ${\rm dim}~M=3$,  by applying Schur Lemma, $M$ has constant sectional curvature $\tfrac{1}{2}$.

Hence, in  Case (3),
the  parallel hypersurface $\Phi_{\frac{\pi} {2\sqrt{2}}}(\tilde{M})$
of $\tilde{M}$ at distance $\frac{\pi}{2\sqrt{2}}$ has constant
sectional curvature $\tfrac{1}{2}$.
%\end{proof}
\qed

\vskip 0.2cm
\noindent{\bf Completion of the proof of Theorem \ref{thm:1.2}}.

Based on Theorem 1 in \cite{Ur}, by applying the classification results in
Theorem \ref{thm:3.2aa} and Theorem \ref{thm:1.1}, we complete the proof
of Theorem \ref{thm:1.2}.

\vskip 2cm

%%%%%%%%%%%%%%%%%%%%%%%%%%%%%%%%%%%%%%%%%%%%%%%%%%%%%%%%%%%%%%%%%%%%

\normalsize\noindent

%\bibliographystyle{amsplain}
%\bibliography{refers}

\vskip 10mm

\begin{flushleft}
Haizhong Li:\\
{\sc Department of Mathematical Sciences, Tsinghua University,\\
Beijing, 100084, P.R. China.}\\
E-mails: lihz@tsinghua.edu.cn.

\vskip 1mm

Luc Vrancken:\\
{\sc
Univ. Polytechnique Hauts-de-France,
CERAMATHS-Laboratoire de Math\'eriaux C\'{e}ramiques et de Math\'{e}matiques,
F-59313 Valenciennes, France;\\
INSA Hauts-de-France, CERAMATHS, F-59313 Valenciennes, France;\\
Department of Mathematics, KU Leuven, Celestijnenlaan 200B, Box 2400, BE-3001 Leuven, Belgium.}\\
E-mail: luc.vrancken@uphf.fr.

\vskip 1mm

Xianfeng Wang:\\
{\sc School of Mathematical Sciences and LPMC, Nankai University,\\
Tianjin 300071, P.R. China.}\\
E-mail: wangxianfeng@nankai.edu.cn.

\vskip 1mm

Zeke Yao:\\
{\sc Department of Mathematical Sciences, Tsinghua University,\\
Beijing, 100084, P.R. China.}\\
E-mails: yaozkleon@163.com.

\end{flushleft}

\end{document}